\documentclass[11pt]{article}
\usepackage{graphicx}
\usepackage{amsfonts}
\usepackage{bbm}
\usepackage{mathrsfs}
\usepackage{mathrsfs,amsmath,amssymb,amsthm}
\usepackage{amssymb}
\usepackage{amsbsy}
\usepackage{color}
\allowdisplaybreaks
 \setlength{\topmargin}{-1cm}
 \setlength{\oddsidemargin}{1.2cm}
 \setlength{\evensidemargin}{1.2cm}
 \setlength{\textwidth}{14cm}
 \setlength{\textheight}{22.5cm}
 \setlength{\parskip}{2pt}

\def\ve{\varepsilon}

\theoremstyle{plain}
\theoremstyle{remark}  \newtheorem{remark}{\noindent\mbox{Remark}}
\theoremstyle{remark}  
\theoremstyle{plain}
\theoremstyle{plain}\newtheorem{lemma}{\noindent\mbox{Lemma}}
\theoremstyle{plain} \newtheorem{theorem}{\noindent\mbox{Theorem}}
\theoremstyle{plain}\newtheorem{proposition}{\noindent\mbox{Proposition}}
\theoremstyle{plain}\newtheorem{corollary}{\noindent\mbox{Corollary}}
\theoremstyle{definition} \newtheorem{definition}{\noindent\mbox{Definition}}
\theoremstyle{definition}
 \def\proof{\noindent{\it Proof.~~}}
 \def\qed{\hfill$\Box$\medskip}
 \def\rto{\rightarrow\infty}
 \def\z{\left}
 \def\y{\right}
 \def\no{\nonumber}

 \begin{document}
 \title{\textbf{ Local time, upcrossing time and weak cutpoints of  a spatially inhomogeneous random walk on the line}}

\author{   Hua-Ming \uppercase{Wang}$^{\dag,\S}$ and Lingyun \uppercase{Wang}$^\dag$  }
\date{}
\maketitle%
 \footnotetext[2]{School of Mathematics and Statistics, Anhui Normal University, Wuhu 241003, China  }
\footnotetext[4]{Email: hmking@ahnu.edu.cn}
\vspace{-.5cm}

\begin{center}
\begin{minipage}[c]{12cm}
\begin{center}\textbf{Abstract}\quad \end{center}
In this paper, we study a transient spatially inhomogeneous random walk with asymptotically zero drifts on the lattice of the positive half line.
We give criteria for the finiteness of the number of points having exactly the same local time and/or upcrossing time and weak cutpoints (a point $x$ is called a weak cutpoint if the walk never returns to $x-1$ after its first upcrossing from $x$ to $x+1$).
In addition, for the walk with some special local drifts, we also give the order of the expected number of these points in $[1,n].$ Finally, we  show that,  when properly scaled, the number of these points in $[1,n]$ converges in distribution to a random variable with the standard exponential distribution. Our results answer three conjectures related to the local time, the upcrossing time, and the weak cutpoints proposed by E. Cs\'aki, A. F\"oldes,  P. R\'ev\'esz [J. Theoret. Probab. 23 (2) (2010)
624-638].

\vspace{0.2cm}

\textbf{Keywords:}\ Random walk; Local time; Upcrossing time; Cutpoints; Moment method
\vspace{0.2cm}

\textbf{MSC 2020:}\ 60J10, 60G50, 60J55
\end{minipage}
\end{center}

\section{Introduction}\label{s1}

In this paper, we study the local time, upcrossing time and weak cutpoints of a spatially inhomogeneous random walk on $\mathbb Z+ := \{0, 1, 2, ...\}$ with asymptotically zero drifts. To introduce precisely the model,  suppose that $p_k,q_k,k\ge0$ are numbers such that $p_0=1,$ $q_0=0,$ $p_k>0,$ $q_k>0$ and $p_k+q_k=1$ for all $k\ge 1.$
Let $X=\{X_k\}_{k\ge 0}$ be a Markov chain on $\mathbb Z_+$ starting from  $0$ with transition probabilities
\begin{align}
&\mathbb P(X_{k+1}=1|X_k=0)=p_0,\nonumber\\
&\mathbb P(X_{k+1}=n+1|X_k=n)= p_n,\nonumber\\
&\mathbb P(X_{k+1}=n-1|X_k=n)= q_{n}, \nonumber
\end{align}
for $n\ge1$ and $k\ge0.$ Since the transition probabilities are spatially inhomogeneous, we call the chain $X$  {\it a spatially inhomogeneous random walk.} We remark that the chain $X$ is also called a birth and death chain in the literature.

For $k\ge1,$ let $\rho_k=\frac{q_k}{p_k}.$ Then
we have the following criterion of recurrence or transience for the chain $X,$ which can be found, e.g., in Chung's book \cite[ Part I. \S 12]{c}.
\begin{proposition}\label{crt}
  The chain $X$ is transient if and only if $\sum_{k=1}^\infty \rho_1\cdots \rho_k<\infty.$
\end{proposition}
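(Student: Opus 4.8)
The plan is to reduce the recurrence/transience dichotomy to the computation of a single escape probability and then solve a second–order linear recurrence explicitly. Since $X$ is irreducible on $\mathbb Z_+$ (from $0$ it moves to $1$, and from any $n\ge1$ it moves to $n\pm1$ with positive probability), transience is equivalent to $\mathbb P_0(X\text{ ever returns to }0)<1$; and by the first step $X_1=1$ this return probability equals $u(1)$, where for $n\ge1$ we set $u(n):=\mathbb P_n(T_0<\infty)$ with $T_0:=\inf\{k\ge0:X_k=0\}$. So it suffices to compute $u(1)$.

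To do so I would introduce the two–sided escape probabilities $u_N(n):=\mathbb P_n(T_0<T_N)$ for $0\le n\le N$, where $T_N:=\inf\{k\ge0:X_k=N\}$. A first–step analysis gives the boundary conditions $u_N(0)=1$, $u_N(N)=0$, and the recurrence $u_N(n)=p_n u_N(n+1)+q_n u_N(n-1)$ for $1\le n\le N-1$. Rewriting this as $p_n\big(u_N(n+1)-u_N(n)\big)=q_n\big(u_N(n)-u_N(n-1)\big)$ and telescoping with $\rho_k=q_k/p_k$, one obtains $u_N(n+1)-u_N(n)=R_n\,(u_N(1)-1)$, where $R_0:=1$ and $R_k:=\rho_1\cdots\rho_k$ for $k\ge1$. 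Summing from $n=0$ and using the two boundary conditions to eliminate $u_N(1)$ yields the closed form
\[
u_N(n)=\frac{\sum_{j=n}^{N-1}R_j}{\sum_{j=0}^{N-1}R_j},\qquad 0\le n\le N.
\]

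Finally I would let $N\to\infty$. Because the steps are to nearest neighbours and each takes one unit of time, $T_N$ is nondecreasing in $N$ with $T_N\to\infty$ almost surely, so $\{T_0<T_N\}\uparrow\{T_0<\infty\}$ and hence $u_N(n)\uparrow u(n)$. Therefore
\[
u(1)=\lim_{N\to\infty}\frac{\sum_{j=1}^{N-1}R_j}{\sum_{j=0}^{N-1}R_j}=\frac{\sum_{j=1}^{\infty}R_j}{1+\sum_{j=1}^{\infty}R_j},
\]
with the right–hand side read as $1$ when $\sum_{j\ge1}R_j=\infty$. Consequently $\mathbb P_0(X\text{ returns to }0)=u(1)<1$ if and only if $\sum_{j=1}^{\infty}R_j=\sum_{k=1}^{\infty}\rho_1\cdots\rho_k<\infty$, which is the assertion.

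The argument is essentially routine; the only points that need a little care are the justification that $T_N\to\infty$ almost surely (so that the monotone passage to the limit is valid) and the bookkeeping in solving the recurrence. Alternatively, one can bypass the explicit recurrence by noting that $X$ is reversible: it is the random walk on $\mathbb Z_+$ with edge resistances $r(n,n+1)=R_n$ (equivalently conductances $1/R_n$), so the effective resistance from $0$ to $\infty$ equals $\sum_{n\ge0}R_n$, and transience is equivalent to this series being finite — the same criterion. Either way, since the statement is classical (cf. Chung \cite{c}), I would keep the proof short.
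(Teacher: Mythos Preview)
Your proof is correct and is the standard textbook argument for this classical criterion. Note, however, that the paper does not supply its own proof of this proposition: it simply cites Chung \cite[Part I, \S 12]{c}. So there is nothing to compare against beyond saying that your derivation (first–step analysis, solving the linear recurrence for the gambler's–ruin probabilities, and passing to the limit) is essentially the classical one that appears in Chung and in most treatments of birth–and–death chains; the electrical–network variant you mention is an equally standard alternative.
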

Notice that the local drift of the walk at $n\ge1$ is $2p_n-1.$
What we are concerned about are random walks with asymptotically zero drifts, that is, the case $p_n\rightarrow 1/2$ as $n\rto,$ whose limit behaviors are different in many aspects from those of simple random walks. Such random walks date back to a series of works of Lamperti. In \cite{lam60, lam63}, Lamperti gave the  criteria for recurrence or transience and the existence of the moments. Moreover, in \cite{lam62}, he proved an invariance principle which says that under certain conditions,   $X_{[nt]}/\sqrt{n}$ converges weakly in $D[0,1]$ to a Bessel process. Such an invariance principle was further investigated  and strengthened in Cs\'aki et al. \cite{cfrb}. It is well known that a transient simple random walk grows linearly to infinity. But for a transient spatially inhomogeneous random walk, if the drifts are asymptotically zero, intuitively, the walk grows much slower than the simple random walk. The number of cutpoints on the path of the walk can reflect the speed of the walk.  Roughly speaking, if the walk never returns to $[0,x]$ after its first entry into $[x+1,\infty),$ then $x$ is a cutpoint. By intuition, if the walk runs to infinity more quickly, there are more cutpoints.  But for a spatially inhomogeneous random walk, things are very different. James et al. \cite{jlp} gave an example of a transient random walk which  has only finitely many cutpoints. Such a phenomenon never happens to the simple random walk since it is known that the number of cutpoints of the simple random walk is either 0 (recurrent case) or $\infty$(transient case). Along this line,  Cs\'aki et al. \cite{cfrc}  provided a criterion to determine the finiteness of the number of cutpoints, which we will quote below. We first give the following definition.

\begin{definition}
For $x\in\mathbb Z_+,$ we call $\xi(x):=\sum_{k=0}^{\infty}1_{\{X_{k}=x\}}$  the local time of the chain $X$ at $x$ and  $\xi(x,\uparrow):=\sum_{k=0}^{\infty}1_{\{X_{k}=x,X_{k+1}=x+1\}}$  the upcrossing time of the chain $X$ from $x$ to $x+1,$ respectively. If $\xi(x,\uparrow)=1,$ we call $x$ a cutpoint of $X.$ If $\xi(x)=1,$ we call $x$ a strong cutpoint of $X.$
\end{definition}

For $n\ge m\ge0,$ write
\begin{align}
  D(m,n):=\z\{\begin{array}{ll}
  0,& \text{if } n=m,\\
  1,& \text{if } n=m+1,\\
    1+\sum_{j=1}^{n-m-1}\rho_{m+1}\cdots\rho_{m+j}, &\text{if } n\ge m+2,
  \end{array}\y.\label{ddmn}
\end{align}
and denote
\begin{align}\label{dm}
  D(m):=\lim_{n\rto}D(m,n).
\end{align}
The theorem below gives a criterion for the finiteness of the number of cutpoints.

{\noindent \bf Theorem A}(Cs\'aki et al. \cite{cfrc}) {\it Suppose $p_i\ge 1/2,\ i=1,2,...$ and let $D(n),\ n=1,2,...$ be as in \eqref{dm}.
If $$\sum_{n=2}^\infty\frac{1}{D(n)\log n}<\infty,$$ then the chain $X$ has finitely many cutpoints almost surely. If $D(n)$ is increasing in $n,$ there exist $n_0>0$ and $\delta> 0$ such that  $D(n)\le \delta n\log n$ for all $n\ge n_0$ and $$\sum_{n=2}^\infty\frac{1}{D(n)\log n}=\infty,$$  then  $X$ has infinitely many strong cutpoints almost surely.
}
\vspace{.2cm}
\begin{remark}
  We remark that the monotonicity condition of $D(n)$ is not contained in \cite[Theorem 1.1]{cfrc}. But we knew from Professor A. F\"oldes, one author of \cite{cfrc}, that  such a  condition is indeed required  for proving the last display on page 634 of \cite{cfrc}.
\end{remark}

 Cs\'aki et al. proposed  in \cite{cfrc}  four conjectures related to the weak cutpoints, local time and upcrossing time of the walk.  We  quote here the first three of them word for word.

\noindent{\bf Open Problems(Cs\'aki et al. \cite{cfrc})}
\begin{itemize}

\item[1.] It would be interesting to know whether Theorem A also holds for the number
of sites with $\xi(R) = a$ or $\xi(R,\uparrow) = a$ for any fixed integer $a > 1$, i.e., whether
we have the same criteria for $\{\xi(R) = a\}$ and $\{\xi(R,\uparrow) = a\}$ to occur infinitely
often almost surely for any positive integer $a.$

  \item[2.] Call the site $R$ a {\it weak cutpoint} if, for some $k$, we have $X_k=R,$ $X_i \le R,\ i=0,1,...,k-1,$ and $X_i\ge R,\ i=k+1,k+2,....$ One would like to know whether Theorem A can be extended for the number of weak cutpoints.

\item[3.] It would be interesting to know whether Theorem A holds for cutpoints with
a given local time, i.e., for $\{\xi(R) = a, \xi(R,\uparrow) = 1\}$ or, in general, $\{\xi(R) = a,
\xi(R,\uparrow) = b\}$ infinitely often almost surely, with positive integers $a, b.$
\end{itemize}

 The main task of the paper is to answer the above open problems.
\begin{definition}\label{def2}
Let $A$ and $B$ be two subsets of $\{1,2,...\}.$
 We denote by
\begin{align*}
  C(A,B):=\{x\in \mathbb Z_+: \xi(x)\in A,\xi(x,\uparrow)\in B\}
\end{align*}
 the collection of nonnegative sites  at which the local times and the upcrossing times of the  walk belong to $A$ and $B,$  respectively. Moreover, consider the weak cutpoint defined in the above open problem 2.
We denote by \begin{align*}
  C_w:=\{x\in \mathbb Z_+: x \text{ is a weak cutpoint}\}
\end{align*} the collection of all weak cutpoints.
\end{definition}
In what follows,  if $A=\{a\},$ instead of $C(\{a\},B),$ we write simply $C(a,B).$ The notations $C(A,a)$ and $C(a,b)$ can be understood similarly.
 Also, for simplicity, we write
\begin{align}
  C(*,a)=C(\{a,a+1,...\},a) \text{ and } C(a,*)=C(a,\{1,...,a\}).\label{cstar}
\end{align}
Notice that $C(*,a)$ is the collection of sites at which the upcrossing times of the walk are exactly $a,$ while $C(a,*)$ is the collection of sites at which the local times of the walk are exactly $a.$ Finally, we use $|C|$ to denote the cardinality for a set $C.$

The theorem below provides criteria to tell whether $C(A,a),$ $C(a,B)$ or $C_w$ are finite or not, answering the above-mentioned open problems.

\begin{theorem}\label{main}  Let $D(n),\ n=1,2,...$ be as in \eqref{dm}.
Suppose that $\rho_k$ is increasing in $k>N_0$ for some $N_0>0$ and $\rho_k\rightarrow 1$ as $k\rto.$ If $$\sum_{n=2}^\infty\frac{1}{D(n)\log n}<\infty,$$ then almost surely, we have \begin{enumerate}
  \item[{\rm(i)}] $|C(A,a)|<\infty$ for each $a\in \{1,2,...\}$ and $A\subseteq \{a,a+1,...\};$
  \item[{\rm(ii)}] $|C(a,B)|<\infty$ for each $a\in \{1,2,...\}$ and  $B\subseteq\{1,2,...,a\};$
       \item[{\rm(iii)}] $|C_w|<\infty.$
      \end{enumerate}
If $D(n)\le \delta n\log n,$ $n\ge n_0$ for some $n_0>0$ and $\delta>0$ and  $$\sum_{n=2}^\infty\frac{1}{D(n)\log n}=\infty,$$ then almost surely, we have
\begin{itemize}
  \item[{\rm(i)}] $|C(A,a)|=\infty$ for each $a\in \{1,2,...\}$ and $\phi\ne A\subseteq \{a,a+1,...\};$
  \item[{\rm(ii)}] $|C(a,B)|=\infty$ for each $a\in \{1,2,...\}$ and  $\phi\ne B\subseteq\{1,2,...,a\};$
       \item[{\rm(iii)}] $|C_w|=\infty.$
 \end{itemize}
\end{theorem}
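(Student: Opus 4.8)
The plan is to reduce all three statements, for both the finite and infinite regimes, to the single already-established case of ordinary cutpoints (Theorem A) by comparing the relevant collections of sites with the collection of cutpoints $C(\mathbb Z_+,1)$. The key observation is that the walk $X$, being transient, crosses every edge $\{x,x+1\}$ at least once, and the upcrossing time $\xi(x,\uparrow)$ admits an exact conditional law: given the environment, after the first visit to $x$ the number of subsequent returns to $x$ from the left before the walk escapes to $+\infty$ is geometric-type, governed by the escape probability from $x$ expressed through the quantities $D(x,n)$ and $D(x)$ in \eqref{ddmn}--\eqref{dm}. Concretely, I would first record the identity that, conditionally on the walk ever reaching $x$, the event $\{\xi(x,\uparrow)=a\}$ has probability comparable (up to constants depending only on $a$) to $\mathbb P(\xi(x,\uparrow)=1)\asymp 1/D(x)$, because returning to $x-1$ and coming back has probability bounded away from $1$ once $\rho_k\to1$; similarly $\{\xi(x)=a\}$ and the weak-cutpoint event at $x$ are each comparable to $1/D(x)$ up to $a$-dependent constants. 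This is where the monotonicity hypothesis "$\rho_k$ increasing for $k>N_0$" and $\rho_k\to1$ enter: they guarantee the relevant per-visit return probabilities are bounded uniformly in $x$, so the geometric tails are genuinely geometric with a uniform parameter.

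**The finite half (first display, $\sum 1/(D(n)\log n)<\infty$).** For (i), since $\{x:\xi(x,\uparrow)\in A\}\subseteq\{x:\xi(x,\uparrow)\ge a\}$ when $A\subseteq\{a,a+1,\dots\}$, it suffices to bound $\mathbb E|C(\{a,a+1,\dots\},a)|$ — wait, more precisely $|C(A,a)|\le|\{x:\xi(x,\uparrow)\ge 1, \xi(x,\uparrow)\le a\}|$ is not monotone, so instead I would directly estimate $\sum_x\mathbb P(\xi(x,\uparrow)=a)\le C_a\sum_x 1/D(x)$; but $\sum 1/D(n)$ need not converge (only $\sum 1/(D(n)\log n)$ does), so a crude first-moment bound is insufficient — exactly as in \cite{cfrc}. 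Hence I would instead mimic the Borel--Cantelli-with-a-logarithmic-gain argument of Cs\'aki et al.: the events at distinct sites are nearly independent after conditioning on the environment, and a block/dyadic argument over scales $[2^k,2^{k+1})$ converts $\sum 1/(D(n)\log n)<\infty$ into summable block probabilities. The cleanest route is to show $|C(A,a)|\le|C(\mathbb Z_+\setminus\{0\},\{1,\dots,a\})|$-type domination is false, so instead I argue: a site with $\xi(x,\uparrow)=a$ lies within bounded "excursion distance" of a genuine cutpoint with positive probability, or more robustly, re-run the proof of Theorem A verbatim replacing the event $\{\xi(x,\uparrow)=1\}$ by $\{\xi(x,\uparrow)=a\}$ (resp. $\{\xi(x)=a\}$, resp. the weak-cutpoint event), checking that every probability estimate in \cite{cfrc} only used $\mathbb P(\cdot)\asymp 1/D(x)$ and a uniform exponential decay of correlations, both of which survive. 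Statements (ii) and (iii) are handled identically, using that $\{\xi(x)=a\}$ forces $\xi(x,\uparrow)\le a$ and the weak-cutpoint event forces $\xi(x-1,\uparrow)=\xi(x,\uparrow)$ in a way that is again $\asymp1/D(x)$.

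**The infinite half (second display).** Here I would establish a converse first-moment-to-second-moment argument. Under $D(n)\le\delta n\log n$ and $\sum 1/(D(n)\log n)=\infty$, Theorem A already gives infinitely many strong cutpoints, i.e. $|C(a,1)|=\infty$ for $a=1$ actually $|C(1,1)|=\infty$; but I need $a>1$. The point is that $C(1,1)\subseteq C(A,a)$ is generally false, so I cannot merely invoke Theorem A. Instead: (a) for $C(A,a)$ with $A\ni a'$ for some $a'\ge a$... no — rather, pick any $a\in A$; I want sites with upcrossing time exactly equal to a prescribed value in $A$. I would show $\mathbb P(\xi(x,\uparrow)=a \mid \text{reach }x)\asymp 1/D(x)$, so $\sum_x\mathbb P(\xi(x,\uparrow)=a)=\infty$, then control the second moment $\sum_{x<y}\mathbb P(\xi(x,\uparrow)=a,\xi(y,\uparrow)=a)$ by the usual decorrelation: conditionally on the environment and on the walk's trajectory up to the last visit to $x$, the future is a fresh walk from $x$, giving $\mathbb P(\xi(x,\uparrow)=a,\xi(y,\uparrow)=a)\le C_a\,\mathbb P(\xi(x,\uparrow)=a)\,\mathbb P(\xi(y,\uparrow)=a)(1+o(1))$, whence the Kochen--Stone / Paley--Zygmund lemma yields $|C(A,a)|=\infty$ a.s. The subtlety — and the main obstacle — is that these are not independent events and the conditioning on the environment must be handled carefully: $D(n)$ is deterministic here (the $\rho_k$ are fixed numbers, not random), so actually the only randomness is the walk, and the "environment" conditioning is vacuous; the real work is the Markov-property decomposition showing the joint probabilities nearly factor, together with verifying the logarithmic-gain Borel--Cantelli in the finite direction. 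I expect the hardest technical point to be precisely this: reproving the delicate second-moment / block estimates of \cite{cfrc} with the events $\{\xi(x,\uparrow)=1\}$ replaced by the $a$-dependent (and for weak cutpoints, two-site-dependent) events, while keeping all constants uniform in $x$ via the hypothesis that $\rho_k\uparrow1$. Items (ii) and (iii) then follow by the same second-moment scheme, noting $\{\xi(x)=a\}$ and the weak-cutpoint event at $x$ are each $\asymp1/D(x)$ with the same decorrelation structure.
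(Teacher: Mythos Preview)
Your eventual strategy---abandon the containment idea and instead re-run the Cs\'aki--F\"oldes--R\'ev\'esz argument with $\{\xi(x,\uparrow)=1\}$ replaced by the $a$-dependent or weak-cutpoint events---is exactly what the paper does. The paper formalises this by extracting an abstract Borel--Cantelli statement (Theorem~\ref{fis}) whose hypotheses isolate precisely which estimates must be re-verified, and then checks those estimates for $C(a,b)$, $C(*,a)$, $C_w$ via explicit joint-probability formulas (Lemma~\ref{pjpw}, Lemma~\ref{aaxy}, Proposition~\ref{propc}).

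However, two of your stated estimates are wrong in ways that matter. For the divergent half, your decorrelation bound $\mathbb P(x\in C,\,y\in C)\le C_a\,\mathbb P(x\in C)\,\mathbb P(y\in C)(1+o(1))$ is false: the events are \emph{positively} correlated, with
\[
\frac{\mathbb P(x\in C,\,y\in C)}{\mathbb P(x\in C)\,\mathbb P(y\in C)}\sim\frac{D(x)}{D(x,y)},
\]
and $D(x)/D(x,y)$ is unbounded when $y-x$ is small compared to $x$. Your Markov heuristic (``condition on the path up to the last visit to $x$, then the future is fresh'') is correct but does not give the product bound: the conditioned walk from $x$ never returns to $x$, and under that constraint $\mathbb P(y\in C)$ is \emph{larger} than unconditionally. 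The actual input to the generalized Borel--Cantelli is the sharp bound $\tfrac{D(x,y)}{D(x)}\cdot\tfrac{\mathbb P(x\in C,y\in C)}{\mathbb P(x\in C)\mathbb P(y\in C)}\le 1+\varepsilon$ (display~\eqref{pw}), after which one passes to the subsequence $m_k=[k\log k]$ and handles the near-diagonal pairs separately using $D(n)\le\delta n\log n$. A plain Kochen--Stone with your claimed inequality would not close.

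For the convergent half, ``$\mathbb P(\cdot)\asymp 1/D(x)$ plus exponential decay of correlations'' is not what the argument in \cite{cfrc} actually uses. The two ingredients needed are a \emph{lower} bound on the reverse conditional probability, $\mathbb P(m\in C\mid n\in C)\ge c/D(m,n)$ for $m<n$ (display~\eqref{pdl}), and a Markov-type identity $\mathbb P(\Gamma_{k+1}^c\cdots\Gamma_n^c\mid\Gamma_i\Gamma_k)=\mathbb P(\Gamma_{k+1}^c\cdots\Gamma_n^c\mid\Gamma_k)$ (display~\eqref{cki}). The lower bound is not automatic for $C(a,b)$ with $b\ge2$ or for $C_w$, and establishing it requires the explicit joint-law computations; the Markov identity rests on the observation (Section~\ref{sec4}) that $\{\xi(x,\uparrow)\}_{x\ge0}$ and the auxiliary chain $\{Y_x\}_{x\ge0}$ underlying the weak cutpoints are themselves Markov. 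You have not identified either ingredient, and without them the dyadic block argument does not go through.
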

\begin{remark} We explain how Theorem 1 answers the above open problems. Noticing that $C(a,\{1,2,...,a\})=\{R\in \mathbb Z_+: \xi(R)=a\}$ and $C(\{a,a+1,...\},a)=\{R\in \mathbb Z_+: \xi(R,\uparrow)=a\},$ thus
the criteria for $|C(a,\{1,2,...,a\})|<\infty$ and $|C(\{a,a+1,...\},a)|<\infty$ answer the above open problem 1.  Clearly, the criterion for $|C_w|<\infty$ gives an answer to the above open problem 2 and the criterion for the finiteness of $C(a,b)$ provides an answer to the above open problem 3.

 Fix $a\in \{1,2,...\},$ $A\subseteq \{a,a+1,...\}$ and   $B\subseteq\{1,2,...,a\}.$ If $D(n)=\infty$ for some and hence all $n\ge0,$ then it follows from Proposition \ref{crt} that the chain $X$ is recurrent. Thus, for any site $x\ge 0,$ we have $\xi(x)=\infty.$ As a consequence, we get  $|C(A,a)|=|C(a,B)|=|C_w|=0,$ which coincides with the convergent part of Theorem \ref{main} since
  in this case, we always have $\sum_{n=2}^\infty\frac{1}{D(n)\log n}=0.$  The interesting phenomenon arises when $D(n)<\infty$ for all $n$ and $\sum_{n=2}^\infty\frac{1}{D(n)\log n}<\infty.$ In this case, the walk is transient,  but almost surely, we have  $|C(A,a)|+|C(a,B)|+|C_w|<\infty.$

Finally, we mention that $C(*,1)$ and $C(1,1)$ are  collections of the cutpoints and strong cutpoints, respectively. From this point of view,  Theorem \ref{main} is a generalization of Theorem A.

\end{remark}

As pointed out in Cs\'aki et al. \cite{cfrc}, the condition ``$D(n)\le \delta n\log n,$ $n\ge n_0$ for some $n_0>0$ and $\delta>0$" is a technical one. We now give a concrete example for which such a condition is satisfied naturally.

Fix $\beta\in \mathbb R.$ Clearly, there exists $n_1>0$ such that $\frac{1}{4}\z(\frac{1}{n}+\frac{1}{n(\log\log n)^\beta}\y)\in (0,1/2)$ for all $n>n_1.$  For $n\ge1,$ set \begin{align}\label{dr}
  r_n=\z\{\begin{array}{ll}
            \frac{1}{4}\z(\frac{1}{n}+\frac{1}{n(\log\log n)^\beta}\y), & \text{if }n\ge n_1,  \\
           r_{n_1},  &  \text{if }n< n_1,
          \end{array}
  \right.
\end{align} which serves as perturbation that will be added to the transition probability of the simple recurrent random walk.
 We emphasize that the perturbation $r_n$ in \eqref{dr} is taken from \cite{cfrc} without change.
\begin{lemma}\label{dnl} For $n\ge1,$ let $r_n$ be the one in \eqref{dr} and set $p_n=\frac{1}{2}+r_n.$ Then
$
 D(n)\sim n(\log\log n)^\beta \text{ as }n\rto.
$
\end{lemma}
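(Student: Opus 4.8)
The statement to prove is Lemma \ref{dnl}: with $p_n = \frac12 + r_n$ and $r_n$ as in \eqref{dr}, we have $D(n)\sim n(\log\log n)^\beta$ as $n\to\infty$. Recall $\rho_k = q_k/p_k = (\frac12-r_k)/(\frac12+r_k)$ and, by \eqref{ddmn}–\eqref{dm}, $D(n) = 1 + \sum_{j=1}^\infty \rho_{n+1}\cdots\rho_{n+j}$. The strategy is the standard one for these Lamperti-type computations: take logarithms to turn the products $\rho_{n+1}\cdots\rho_{n+j}$ into sums, approximate those sums by integrals, and then recognize the resulting series $\sum_j e^{-(\text{sum})}$ as (essentially) a Riemann sum for an integral that evaluates to $n(\log\log n)^\beta$.

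First I would record the asymptotics of $\log\rho_k$. Since $r_k = \frac{1}{4k}\big(1 + (\log\log k)^{-\beta}\big)\to 0$, we have $\log\rho_k = \log(1-2r_k) - \log(1+2r_k) = -4r_k + O(r_k^3) = -\frac{1}{k}\big(1+(\log\log k)^{-\beta}\big) + O(k^{-3})$. Summing,
\[
\log\big(\rho_{n+1}\cdots\rho_{n+j}\big) = -\sum_{k=n+1}^{n+j}\frac{1}{k} - \sum_{k=n+1}^{n+j}\frac{1}{k(\log\log k)^{\beta}} + O(n^{-2}).
\]
The first sum is $\log\frac{n+j}{n} + O(1/n)$ uniformly, and the second, for the ranges of $j$ that matter (one checks the tail $j\gg n\,\mathrm{polylog}$ contributes negligibly because the first sum already forces exponential decay), is $\frac{1}{(\log\log n)^\beta}\log\frac{n+j}{n}$ up to lower-order corrections, since $\log\log k$ varies very slowly. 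Hence
\[
\rho_{n+1}\cdots\rho_{n+j} = \Big(\frac{n}{n+j}\Big)^{1 + (\log\log n)^{-\beta} + o(1)},
\]
uniformly in $j$ in the relevant range.

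Next I would sum over $j$. Writing $\alpha_n := 1 + (\log\log n)^{-\beta} + o(1)$, we get $D(n) = 1 + \sum_{j=1}^\infty (n/(n+j))^{\alpha_n}(1+o(1))$, and comparing the sum with $\int_0^\infty (n/(n+x))^{\alpha_n}\,dx = \frac{n}{\alpha_n - 1}$ gives $D(n) \sim \frac{n}{\alpha_n - 1} = \frac{n}{(\log\log n)^{-\beta} + o(1)} \sim n(\log\log n)^\beta$. The monotone/slowly-varying nature of $(\log\log k)^{-\beta}$ makes the integral comparison clean, and the $o(1)$ terms are harmless because $(\log\log n)^{-\beta}\to 0$ only logarithmically-slowly, so they are genuinely of smaller order than $(\log\log n)^{-\beta}$ — this is the one place requiring a little care, and I would make the error bookkeeping explicit by splitting the sum at, say, $j \le n(\log n)^2$ and $j > n(\log n)^2$.

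The main obstacle is controlling the uniformity of the approximation $\rho_{n+1}\cdots\rho_{n+j}\approx (n/(n+j))^{\alpha_n}$ across all $j$ simultaneously, and in particular verifying that replacing $(\log\log k)^{-\beta}$ by the frozen value $(\log\log n)^{-\beta}$ costs only $o\big((\log\log n)^{-\beta}\big)$ in the exponent after summation — because the final answer's leading constant is exactly $(\log\log n)^{-\beta}$, so a careless bound of size $(\log\log n)^{-\beta}$ would destroy the asymptotic equivalence. I would handle this by noting $\big|(\log\log k)^{-\beta} - (\log\log n)^{-\beta}\big| \le C\,\frac{\log(k/n)}{n(\log n)(\log\log n)^{\beta+1}}$ type estimates for $n\le k\le n(\log n)^2$, which when multiplied by the $\log(n+j)/n$ factor and summed stays $o\big((\log\log n)^{-\beta}\big)$; the far tail $j>n(\log n)^2$ is crushed by the factor $(n/(n+j))^{1+o(1)}$. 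Once these two regimes are dispatched, the integral comparison finishes the proof.
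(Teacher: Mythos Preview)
Your route is valid but different from the paper's. The paper writes $D(n)=B_n/(\rho_1\cdots\rho_n)$ with $B_n:=\sum_{i\ge n}\rho_1\cdots\rho_i$, establishes $\rho_1\cdots\rho_n\sim \frac{c}{n}\exp\bigl\{-\int_3^n\frac{dx}{x(\log\log x)^\beta}\bigr\}$ via a general product-to-integral lemma (Lemma~\ref{lfr}), compares $B_n$ to the corresponding tail integral, and then quotes from \cite{cfrc} the asymptotic $\int_n^\infty\frac{1}{y}\exp\bigl\{-\int_3^y\frac{dx}{x(\log\log x)^\beta}\bigr\}dy\sim(\log\log n)^\beta\exp\bigl\{-\int_3^n\frac{dx}{x(\log\log x)^\beta}\bigr\}$. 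That approach never freezes $(\log\log k)^{-\beta}$, so the uniformity issue you correctly flag simply does not arise; the price is that the key integral asymptotic is imported rather than proved. Your direct computation is more self-contained and makes the mechanism $D(n)\approx n/(\alpha_n-1)$ explicit.

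That said, your proposed split at $j=n(\log n)^2$ fails for $\beta\ge 1$. The mass of $\sum_j(n/(n+j))^{\alpha_n}$ sits around $j$ with $\log((n+j)/n)\sim(\alpha_n-1)^{-1}=(\log\log n)^\beta$; when $\beta>1$ this gives $j\sim n\exp\bigl((\log\log n)^\beta\bigr)\gg n(\log n)^2$, so your ``main range'' captures a vanishing fraction of $D(n)$ and your ``tail'' is in fact the bulk. The fix is easy: cut instead at $j=n\exp\bigl((\log\log n)^{|\beta|+1}\bigr)$. For $k$ up to that point one still has $\log\log k-\log\log n=O\bigl((\log\log n)^{|\beta|+1}/\log n\bigr)=o(1)$, so the freezing error in the exponent remains $o(1)$ uniformly, while the tail beyond contributes $\frac{n}{\alpha_n-1}\exp(-(\log\log n))=o\bigl(n(\log\log n)^\beta\bigr)$. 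Two smaller slips: your mean-value estimate should read $|(\log\log k)^{-\beta}-(\log\log n)^{-\beta}|=O\bigl(\log(k/n)/((\log n)(\log\log n)^{\beta+1})\bigr)$ without the extra factor $1/n$; and ``crushed by $(n/(n+j))^{1+o(1)}$'' is not a tail bound on its own, since exponent $1$ gives a divergent series---you genuinely need the $(\log\log n)^{-\beta}$ in the exponent to make the tail summable.
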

The proof of  Lemma \ref{dnl} is given  in Section \ref{s5}.
It is easy to see from Lemma \ref{dnl} that there is $n_0>0$ such that $D(n)<n\log n,$ $n\ge n_0.$ Thus, applying Theorem \ref{main}, we have the following corollary, which gives sharp criteria for the finiteness of the sets $C(A,a)$, $C(a,B)$ and $C_w.$
\begin{corollary}\label{cor1} Fix $a\in \{1,2,...\},$ $\phi\ne A\subseteq \{a,a+1,...\}$ and   $\phi\ne B\subseteq\{1,2,...,a\}.$ For $n\ge1,$ let $r_n$ be the one in \eqref{dr} and set $p_n=\frac{1}{2}+r_n.$ If $\beta>1,$ we have $|C(A,a)|+|C(a,B)|+|C_w|<\infty$ almost surely. If  $\beta\le 1,$ we have $|C(A,a)|=|C(a,B)|=|C_w|=\infty$ almost surely.
\end{corollary}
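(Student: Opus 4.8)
\noindent\textbf{Proof proposal for Corollary \ref{cor1}.}
The plan is to obtain the corollary as a direct consequence of Theorem \ref{main} and Lemma \ref{dnl}: all that is really needed is to check that the sequence $p_n=\tfrac12+r_n$ satisfies the structural hypotheses of Theorem \ref{main}, and then to evaluate the series $\sum_{n\ge 2}\frac{1}{D(n)\log n}$.

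First I would verify the hypotheses on $\rho_k$. Since $p_k=\tfrac12+r_k$ and $q_k=\tfrac12-r_k$ with $r_k\downarrow 0$, we have $\rho_k=\frac{1-2r_k}{1+2r_k}\to 1$ as $k\to\infty$. Because $x\mapsto\frac{1-2x}{1+2x}$ is strictly decreasing on $(0,1/2)$, the requirement ``$\rho_k$ increasing for $k>N_0$'' is equivalent to ``$r_k$ decreasing for $k>N_0$''. For $\beta\ge 0$ this is immediate, since $n$ and $n(\log\log n)^\beta$ are both eventually increasing. For $\beta<0$ one writes $r_n=\frac{1}{4}n^{-1}\big(1+(\log\log n)^{|\beta|}\big)$ and checks that the extra term $n^{-1}(\log\log n)^{|\beta|}$ is eventually decreasing, e.g.\ because its logarithmic derivative $-\frac1n+\frac{|\beta|}{n\log n\,\log\log n}$ is negative for all large $n$. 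Hence $\rho_k$ is increasing for $k$ beyond some $N_0$ and $\rho_k\to 1$, as required.

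Next I would invoke Lemma \ref{dnl}, which gives $D(n)\sim n(\log\log n)^\beta$ as $n\to\infty$; in particular $D(n)<\infty$ for every $n$ (so, by Proposition \ref{crt}, the walk is transient), and since $(\log\log n)^\beta=o(\log n)$ for every fixed $\beta\in\mathbb R$, there exist $n_0>0$ and $\delta>0$ with $D(n)\le\delta n\log n$ for $n\ge n_0$, which is exactly the technical growth bound appearing in both halves of Theorem \ref{main}. It then remains to decide the convergence of $\sum_{n\ge 2}\frac{1}{D(n)\log n}$. By Lemma \ref{dnl} this series has the same behaviour as $\sum_{n\ge 3}\frac{1}{n(\log\log n)^\beta\log n}$, and by the Cauchy condensation test (or the integral test with the substitution $u=\log\log x$, under which $\frac{dx}{x\log x}=du$, reducing it to $\int u^{-\beta}\,du$) the latter converges if and only if $\beta>1$.

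Combining these facts finishes the proof: for $\beta>1$ all hypotheses of the convergent part of Theorem \ref{main} hold, giving $|C(A,a)|+|C(a,B)|+|C_w|<\infty$ almost surely; for $\beta\le 1$ the growth bound $D(n)\le\delta n\log n$ holds and the series diverges, so the divergent part of Theorem \ref{main} applies and yields $|C(A,a)|=|C(a,B)|=|C_w|=\infty$ almost surely. I do not expect a genuine obstacle here: the only step needing a little care is the monotonicity of $r_n$ (equivalently of $\rho_k$) when $\beta<0$, and the only computation of substance is the convergence test for the Bertrand-type series $\sum 1/\big(n\log n\,(\log\log n)^\beta\big)$. \qed
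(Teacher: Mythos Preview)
Your proposal is correct and follows essentially the same route as the paper, which derives the corollary directly from Theorem \ref{main} once Lemma \ref{dnl} supplies $D(n)\sim n(\log\log n)^\beta$; you have simply filled in the verification of the hypotheses (monotonicity of $\rho_k$, the growth bound, and the Bertrand series test) that the paper leaves implicit. One small inaccuracy: the condition $D(n)\le\delta n\log n$ is only assumed in the divergent half of Theorem \ref{main}, not in both halves, but this does not affect your argument.
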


Another interesting question is to estimate the cardinality of the set $C\cap [1,n]$ for $C=C_w,$ $C(A,a)$ or $C(a,B)$  and $n$ large enough.
We first compute the expectations of the cardinalities of these sets.
\begin{proposition}\label{ec} Suppose  $a\ge b\ge1$ are  positive integers.  For $n\ge1,$ let $r_n$ be as in \eqref{dr} and set $p_n=\frac{1}{2}+r_n.$ Then we have
  \begin{align}\label{ecab}
  \lim_{n\rto}\frac{(\log\log n)^{\beta}}{\log n}\mathbb E|C(a,b)\cap[1,n]|=\frac{1}{2^a}\binom{a-1}{b-1}
\end{align}
 and
 \begin{align}\label{ecw}
   \lim_{n\rto}\frac{(\log\log n)^{\beta}}{\log n}\mathbb E|C_w\cap[1,n]|=2.
 \end{align}
  \end{proposition}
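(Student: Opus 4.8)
The plan is to reduce \eqref{ecab} and \eqref{ecw} to exact single-site computations: for each fixed $x\ge 1$ I would compute $\mathbb P(\xi(x)=a,\xi(x,\uparrow)=b)$ and $\mathbb P(x\in C_w)$ explicitly, then sum over $x\in[1,n]$ and extract the asymptotics from Lemma \ref{dnl}. Under the standing assumption ($p_n=\tfrac12+r_n$ with $r_n$ as in \eqref{dr}) the walk is transient to $+\infty$ by Proposition \ref{crt} and Lemma \ref{dnl}, so it hits every $x\ge 0$ almost surely; moreover, from $x-1$ it returns to $x$ almost surely, since the only way to leave $\{0,\dots,x-1\}$ is a step up from $x-1$. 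Put $\beta_x:=\mathbb P_{x+1}(\text{the walk ever hits }x)$ (here and below $\mathbb P_y$ is the law of the walk started at $y$). Applying the strong Markov property at the successive visits of the walk to $x$, each such visit independently produces one of the outcomes ``step down and return to $x$'' (probability $q_x$), ``step up and return to $x$'' (probability $p_x\beta_x$), ``step up and never return to $x$'' (probability $p_x(1-\beta_x)$), and the walk leaves $x$ forever at the first occurrence of the third outcome. Hence $\xi(x)$ is geometric, $\mathbb P(\xi(x)=a)=(q_x+p_x\beta_x)^{a-1}p_x(1-\beta_x)$, and conditionally on $\xi(x)=a$ the number of upcrossings $x\to x+1$ other than the final (escaping) one is $\mathrm{Binomial}\!\big(a-1,\tfrac{p_x\beta_x}{q_x+p_x\beta_x}\big)$. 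Multiplying, the $(q_x+p_x\beta_x)$-powers cancel and
\[
\mathbb P(\xi(x)=a,\xi(x,\uparrow)=b)=p_x(1-\beta_x)\binom{a-1}{b-1}(p_x\beta_x)^{b-1}q_x^{a-b}.
\]

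Next I would evaluate $\beta_x$ and the weak-cutpoint probability using the scale function of the birth--death chain. A harmonic $h$ obeys $h(k+1)-h(k)=\rho_k\big(h(k)-h(k-1)\big)$, so normalising $h(m)=0,\ h(m+1)=1$ yields $h(m+j)=D(m,m+j)$ with $D(\cdot,\cdot)$ as in \eqref{ddmn}. Optional stopping then gives $\mathbb P_{x+1}(\text{hit }x+j\text{ before }x)=1/D(x,x+j)$; since the walk is transient, $\{\text{never hit }x\}=\bigcap_j\{\text{hit }x+j\text{ before }x\}$, so letting $j\to\infty$ gives $1-\beta_x=\mathbb P_{x+1}(\text{never hit }x)=1/D(x)$. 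The identical argument with lower barrier $x-1$ gives $\mathbb P_{x+1}(\text{never hit }x-1)=D(x-1,x+1)/D(x-1)=(1+\rho_x)/D(x-1)$. Since the walk hits $x+1$ almost surely, and (as is immediate from Definition \ref{def2}) $x$ is a weak cutpoint precisely when the walk never returns to $x-1$ after its first upcrossing from $x$ to $x+1$, we conclude $\mathbb P(x\in C_w)=(1+\rho_x)/D(x-1)=(1+\rho_x)/(1+\rho_x D(x))$, using $D(x-1)=1+\rho_x D(x)$.

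It remains to pass to the asymptotics. Since $p_x,q_x\to\tfrac12$, $\rho_x\to1$ and $D(x)\to\infty$ (Lemma \ref{dnl}), the formula $1-\beta_x=1/D(x)$ yields $\mathbb P(\xi(x)=a,\xi(x,\uparrow)=b)\sim 2^{-a}\binom{a-1}{b-1}/D(x)$ and $\mathbb P(x\in C_w)\sim 2/D(x)$ as $x\to\infty$. Writing $\mathbb E|C\cap[1,n]|=\sum_{x=1}^n\mathbb P(x\in C)$ and using that $\alpha_x\sim\gamma_x>0$ with $\sum_x\gamma_x=\infty$ forces $\sum_{x\le n}\alpha_x\sim\sum_{x\le n}\gamma_x$, the statement reduces to $\sum_{x=1}^n 1/D(x)\sim \log n/(\log\log n)^\beta$. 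By Lemma \ref{dnl} one has $1/D(x)\sim x^{-1}(\log\log x)^{-\beta}$, and substituting $u=\log x$ together with the elementary fact $\int_2^T(\log u)^{-\beta}\,du\sim T(\log T)^{-\beta}$ (the integrand is slowly varying; apply L'Hôpital) gives the claim. Combining, $\mathbb E|C(a,b)\cap[1,n]|\sim 2^{-a}\binom{a-1}{b-1}\log n/(\log\log n)^\beta$ and $\mathbb E|C_w\cap[1,n]|\sim 2\log n/(\log\log n)^\beta$, which are exactly \eqref{ecab} and \eqref{ecw}.

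The one step requiring genuine care is the excursion decomposition of the first paragraph: one must justify rigorously, via the strong Markov property at the successive return times of the walk to $x$, that the per-visit outcomes are i.i.d.\ with the stated probabilities, and that conditionally on $\xi(x)=a$ the ``up vs.\ down'' labels of the first $a-1$ visits are i.i.d.\ Bernoulli and independent of the terminal escape. Once this bookkeeping is in place, the scale-function identities and the integral asymptotic are routine.
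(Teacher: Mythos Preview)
Your proof is correct and follows essentially the same route as the paper: compute the single-site probabilities (the paper quotes its Lemmas~\ref{lxa} and~\ref{pjpw}; your $(1+\rho_x)/D(x-1)$ equals the paper's $1/(p_xD(x-1))$ since $1+\rho_x=1/p_x$), sum over $x\le n$, and extract the asymptotics from Lemma~\ref{dnl} via the same integral $\int dx/(x(\log\log x)^{\beta})$. Your direct escape-probability computation of $\mathbb P(x\in C_w)=P_{x+1}(x-1,\infty,+)$ is marginally cleaner than the paper's geometric sum over backward loops, but otherwise the arguments coincide.
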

  \begin{remark}
  Fix $a\in \{1,2,...\},$ $A\subseteq \{a,a+1,...\}$ and   $B\subseteq\{1,2,...,a\}.$ By the linear property of the expectation, it follows immediately from \eqref{ecab} that
  \begin{align}\label{ecaa}
  &\lim_{n\rto}\frac{(\log\log n)^{\beta}}{\log n}\mathbb E|C(A,a)\cap[1,n]|=\sum_{x\in A}\frac{1}{2^{x}}\binom{x-1}{a-1},\\
  &\lim_{n\rto}\frac{(\log\log n)^{\beta}}{\log n}\mathbb E|C(a,B)\cap[1,n]|=\frac{1}{2^{a}}\sum_{x\in B}\binom{a-1}{x-1}.\label{eca}
\end{align}
     Recall that $C(a,*)$ is the collection of sites at which the local times of the walk are exactly $a,$ while $C(*,a)$  is the collection of sites at which the upcrossing times of the walk are exactly $a.$
   From \eqref{ecaa} and\eqref{eca}, we get
    \begin{align}
      &\lim_{n\rto}\frac{(\log\log n)^{\beta}}{\log n}\mathbb E|C(*,a)\cap[1,n]|=1,\no\\
      &\lim_{n\rto}\frac{(\log\log n)^{\beta}}{\log n}\mathbb E|C(a,*)\cap[1,n]|=1/2.\no
    \end{align}
  \end{remark}

Inspired by Proposition \ref{ec},  one may expect that
$\frac{(\log\log n)^\beta}{\log n}\z|C(a,b)\cap [1,n]\y|\rightarrow \frac{1}{2^a}\binom{a-1}{b-1}$ for $a\ge b\ge1$  or $\frac{(\log\log n)^\beta}{\log n}\z|C_w\cap [1,n]\y|\rightarrow 2$ almost surely as $n\rto.$ However, this is not the case.  The main reason is that the events $\{x\in C(a,b)\}(\text{or } x\in C_w), x=1,2,...,n$ are not independent. We have the following theorem.
\begin{theorem}\label{cs} Suppose $a\ge b\ge1$ are positive integers.  Let $r_1=1/4 $ and  $r_n=\frac{1}{2n}$ for $n\ge2.$  Set $p_n=\frac{1}{2}+r_n$ for $n\ge1.$
  Then
  \begin{align*}
  \frac{\z|C(a,b)\cap [1,n]\y|}{2^{-a}\binom{a-1}{b-1}\log n}\overset{ d}{\rightarrow} S,\ \frac{|C(*,a)\cap[1,n]|}{\log n}\overset{ d}{\rightarrow} S\text{ and }  \frac{\z|C_w\cap [1,n]\y|}{2\log n}\overset{ d}{\rightarrow} S
  \end{align*}  as $n\rto,$ where (and in what follows) the notation $\overset{ d}{\rightarrow}$ means the convergence in distribution and $S$ is an exponential random variable with $\mathbb P(S>t)=e^{-t}, t>0.$
\end{theorem}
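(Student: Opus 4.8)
The plan is to use the method of moments. Fix one of the three sets $C\in\{C(a,b),\,C(*,a),\,C_w\}$ and let $c$ be the corresponding constant, namely $c=2^{-a}\binom{a-1}{b-1}$, $c=1$, or $c=2$; put $Z_n=|C\cap[1,n]|$. I will show that $\mathbb E\big[(Z_n/(c\log n))^k\big]\to k!$ for every fixed integer $k\ge1$. Since $k!$ is the $k$-th moment of the standard exponential variable $S$ and $S$ is determined by its moments (Carleman's condition holds, as $((2k)!)^{1/(2k)}=O(k)$), this yields $Z_n/(c\log n)\overset{d}{\rightarrow}S$. Expanding the power and using the symmetry of the summand,
\begin{align*}
\mathbb E\big[Z_n^k\big]=k!\sum_{1\le x_1<\cdots<x_k\le n}\mathbb P\Big(\bigcap_{i=1}^k\{x_i\in C\}\Big)+R_n ,
\end{align*}
where $R_n$ collects the terms from tuples with a repeated coordinate; each such term is a sum over at most $k-1$ distinct indices, so by induction on $k$ one gets $R_n=O\big((\log n)^{k-1}\big)=o\big((\log n)^k\big)$. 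Thus everything reduces to proving
\begin{align*}
\sum_{1\le x_1<\cdots<x_k\le n}\mathbb P\Big(\bigcap_{i=1}^k\{x_i\in C\}\Big)\sim(c\log n)^k\qquad(n\rto).
\end{align*}

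The core of the proof is an estimate for this joint probability, obtained by peeling the sites off from the left with the (strong) Markov property. First, the one-point probabilities: the chain being transient to $+\infty$ visits every $x\ge1$ almost surely, and decomposing the successive moves from $x$ into down-moves (which always lead back to $x$) and up-moves (each returning to $x$ with probability $1-1/D(x)$ and otherwise escaping forever, the value $1/D(x)=1/(x+1)$ coming from the scale-function computation behind Proposition~\ref{crt}, for which $D(x,z)=(x+1)(z-x)/z$ on this chain) one obtains
\begin{align*}
\mathbb P\big(x\in C(a,b)\big)=\binom{a-1}{b-1}\Big(p_x\big(1-\tfrac1{D(x)}\big)\Big)^{b-1}q_x^{\,a-b}\,\frac{p_x}{D(x)}\sim\frac{c}{x},
\end{align*}
and likewise $\mathbb P(x\in C(*,a))=\big(1-1/D(x)\big)^{a-1}/D(x)\sim 1/x$, while — using that $x$ is a weak cutpoint exactly when the walk from $x+1$ never reaches $x-1$ — $\mathbb P(x\in C_w)=1-\frac{(D(x)-1)(D(x-1)-1)}{D(x)D(x-1)}\sim 2/x$ (all consistent with Proposition~\ref{ec}). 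Next, conditionally on $\bigcap_{i=1}^{j-1}\{x_i\in C\}$, the walk after its last visit to $x_{j-1}$ is distributed as the original chain started from $x_{j-1}+1$ and Doob-conditioned never to return to $x_{j-1}$ — the events $\{x_i\in C\}$ for $i<j-1$ being automatically realized by a path confined to $[x_{j-1}+1,\infty)$. Writing this $h$-transform with $h(y)=D(x_{j-1},y)/D(x_{j-1})$ and telescoping, the analogue of $D$ for the transformed chain equals $1+x_j-x_{j-1}$ at $x_j$, so the conditional probability of $\{x_j\in C\}$ is, to leading order, $c/(x_j-x_{j-1})$, depending only on the gap. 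Iterating,
\begin{align*}
\mathbb P\Big(\bigcap_{i=1}^k\{x_i\in C\}\Big)=\prod_{i=1}^k\frac{c+o(1)}{x_i-x_{i-1}},\qquad x_0:=0,
\end{align*}
uniformly over $x_1<\cdots<x_k\le n$ with all gaps $x_i-x_{i-1}\to\infty$; tuples with some bounded gap contribute $o\big((\log n)^k\big)$ after summation.

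Granting this, the sum is evaluated by the substitution $g_i=x_i-x_{i-1}\ge1$ with $g_1+\cdots+g_k\le n$: dropping the last constraint bounds it above by $\prod_i\sum_{g=1}^{n}\frac{c+o(1)}{g}=(c\log n)^k(1+o(1))$, while restricting to $g_i\le\lfloor n/k\rfloor$ (which forces $\sum_i g_i\le n$) bounds it below by $\big(\sum_{g=1}^{\lfloor n/k\rfloor}\frac{c+o(1)}{g}\big)^k=(c\log n)^k(1+o(1))$. Hence the sum is $\sim(c\log n)^k$, giving $\mathbb E[(Z_n/(c\log n))^k]\to k!$ and the theorem for all three families at once.

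The step I expect to be hard is the joint-probability estimate for $C(a,b)$ when $a\ge2$. Then, conditionally on $\{x_{j-1}\in C(a,b)\}$, the walk performs $b-1$ \emph{finite} excursions above $x_{j-1}$ besides the infinite one, and a finite excursion already reaches $x_j$ with probability of order $1/(x_j-x_{j-1})$ — the very same order as the target conditional probability — so these excursions cannot be discarded, and one must show that the visit-count and upcrossing-count at $x_j$ accumulated over all $b$ excursions above $x_{j-1}$ have, up to a $1+o(1)$ factor and with precisely the constant $c$, the law of a fresh chain's statistics at $x_j$. This requires a careful, uniform-in-$(x_{j-1},x_j)$ analysis of the excursion measures of the relevant $h$-transforms (their one-point statistics at $x_j$ and their probability of reaching $x_j$ at all). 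The other two cases are lighter: for $C(*,a)$ only the infinite excursion above $x_{j-1}$ matters; for $C_w$ one conditions instead on never hitting $x_{j-1}-1$; and for plain cutpoints $C(*,1)$ the conditional probability is exactly $1/(1+x_j-x_{j-1})$, so there the product formula above is an identity.
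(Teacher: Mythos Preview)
Your skeleton is the paper's: moment method, reduce the $k$-th moment to the ordered sum $\sum_{x_1<\cdots<x_k}\mathbb P(\bigcap\{x_i\in C\})$, show this sum is $\sim(\lambda_C\log n)^k$, and conclude by Carleman. Your gap-substitution bound for the sum (drop the constraint for the upper bound, restrict to $g_i\le n/k$ for the lower) is a clean replacement for the paper's Lemma~\ref{lems}. Your exact identities $D(x)=x+1$ and $D(x,y)=(x+1)(y-x)/y$ on this particular chain are correct and sharpen the paper's asymptotic Lemma~\ref{dij}.

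The genuine gap is the joint-probability step, and your self-diagnosis of the obstacle is off. You argue via an $h$-transform after the \emph{last} visit to $x_{j-1}$, but for $C=C(*,a)$ with $a\ge2$ (not just for $C(a,b)$) the $a-1$ finite forward excursions at $x_{j-1}$ occur \emph{before} that last visit and can already reach $x_j$; your statement ``for $C(*,a)$ only the infinite excursion matters'' is precisely what needs proof. Conversely, your worry that for $C(a,b)$ ``these excursions cannot be discarded'' is misplaced: if a second excursion reaches $x_j$, the event $\{x_j\in C\}$ then forces a backward move $B_{x_jx_{j-1}}$, whose probability $q_{x_j}(1-D(x_{j-1},x_j-1)/D(x_{j-1},x_j))$ is itself $O(1/(x_j-x_{j-1}))$, so the total contribution is $O((x_j-x_{j-1})^{-2})$ and \emph{is} negligible. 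You also need a universal upper bound (valid for \emph{all} gaps) to dispose of tuples with some bounded $g_i$; your product formula only holds when all gaps are large.

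The paper closes all of this without $h$-transforms. It observes that $\{\xi(x,\uparrow)\}_{x\ge0}$ (and, for $C_w$, the auxiliary process $\{Y_x\}$) is a Markov chain, so $\mathbb P(\bigcap_i\{x_i\in C\})$ factors into a product of two-point conditionals (Lemma~\ref{lemm}). It then computes the two-point probability $\mathbb P(x\in C,\,y\in C)$ explicitly by decomposing into $i$ forward moves $F_{xy}$ (Lemma~\ref{aaxy}) and shows the $i=1$ term dominates while $i\ge2$ picks up an extra factor $\big(\tfrac{1}{D(x,y)}(1-\tfrac{D(x,y-1)}{D(x,y)})\big)^{i-1}=O((y-x)^{-2(i-1)})$ (Proposition~\ref{propc}). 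This yields both the uniform upper bound $\mathbb P(\bigcap)\le c\,/\big(D(j_k)\prod D(j_i,j_{i+1})\big)$ and the two-sided estimate $(\lambda_C\pm\varepsilon)^k/\big(D(j_k)\prod D(j_i,j_{i+1})\big)$ for large gaps, which on this chain is exactly your $\prod(\lambda_C+o(1))/g_i$. Plugging into your sum argument then finishes the proof.
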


 \begin{remark} Theorem \ref{cs} is proved by the standard moment method. The key step is to find the limits of  the moments of $\frac{2^a\z|C(a,b)\cap [1,n]\y|}{\binom{a-1}{b-1}\log n},$ $\frac{\z|C(*,a)\cap [1,n]\y|}{\log n},$ or $\frac{\z|C_w\cap [1,n]\y|}{2\log n},$ which depend on the joint probability of $\{j_1\in C,...,j_m\in C\}$ for $C=C(a,b)$, $C(*,a)$ or $C_w,$ $0< j_1<j_2<...<j_m.$ Except for the special case $a\ge b=1,$ it is hard to give explicitly the formulae of such joint probabilities. But we can show that  the events $\{j_i\in C\},$  $i=1,...,m$ satisfy certain Markov properties, that is, $\mathbb P(j_{s+1}\in C|j_s\in C,...,j_1\in C)=\mathbb P(j_{s+1}\in C|j_s\in C)$ for $1\le s<m,$ $C=C(a,b)$, $C(*,a)$ or $C_w.$
 So it suffices to compute and estimate the two-dimensional joint probabilities of these events.
 Fix an integer $a>1.$ For general $A\subseteq \{a,a+1,...\}$ and  $B\subseteq\{1,2,...,a\},$ we currently do not know how to get the limit distributions of $|C(A,a)\cap [1,n]|$ and $|C(a,B)\cap [1,n]|.$ For example, for $a>1,$ $C(a,*)\cap [1,n]$ is the collection of sites in $[1,n]$ at which the local times of the walk are exactly $a.$ But neither is  $\{\xi(x)\}_{x\ge 0}$  a Markov chain nor is it possible for us to compute and estimate the joint probability of $\{\xi(j_1)=a,...,\xi(j_m)=a\}.$ Therefore, by our method, we can not find the limit distribution of $|C(a,*)\cap [1,n]|.$
\end{remark}

\vspace{.4cm}

\noindent{\bf Outline of the paper.} The present paper is built up as follows. Firstly,  besides introducing  the exit probabilities of the walk, we  prove some combinatorial results in Section \ref{ar}, which play key roles in proving the main results.  Then, by summarizing the proofs of Cs\'aki et al. \cite[Theorem 1.1]{cfrc} and James et al. \cite[Theorem 3.1]{jlp}, we get a version of the Borel-Cantelli lemma in Section \ref{sec3},  which will be used to prove Theorem \ref{main}. In Section \ref{sec4}, we express the upcrossing times in terms of loops, moves and escapes.
The probability of $\{x\in C\},$ and the joint probability and dependence of $\{x\in C\}$ and $\{y\in C\}$ for $C=C_w,$ $C(*,a)$ or $C(a,b)$ are studied in Section \ref{sec5}. Section \ref{s4} is devoted to proving Theorem \ref{main}. Finally, Proposition \ref{ec} and Theorem \ref{cs} are proved in Section \ref{s5} and Section \ref{s6}, respectively.

\vspace{.3cm}

{\it\noindent Notations:}
When writing $A(n)\sim B(n)$, we mean  $A(n)/B(n)\rightarrow 1$ as $n \rightarrow \infty$. The notation $A(n)=O(B(n))$ means there is a constant $c>0$ such that $\lvert A(n) \rvert< cB(n) $ for all $n$ large enough.
We also adopt the convention that an empty product equals identity and an empty sum equals $0.$ Finally, throughout the paper, $0<c <\infty$ is assumed to be a constant whose value may change from line to line.

\section{Auxiliary results}\label{ar}
In this section, we introduce first the exit probabilities of the walk from certain intervals and then give some combinatorial results that are  helpful for the proofs of both Theorem \ref{main} and Theorem \ref{cs}.
\subsection{Exit probabilities}
For $n\ge k\ge m\ge 0$ set
\begin{align}
  P_k(m,n,-)&:=\mathbb P(X \text{ hits } m \text{ before it hits }n |X_0=k),\label{pmnm}\\
  P_k(m,n,+)&:=\mathbb P(X \text{ hits } n \text{ before it hits }m |X_0=k),\label{pmnp}
\end{align}
which are the exit probabilities of the walk from $[m,n].$
Evidently, we have $$P_k(m,n,-)+P_k(m,n,+)=1, \text{ for }n\ge k\ge m\ge 0.$$
Set also
$$P_0(-1,n,+)=\mathbb P(X \text{ hits } n \text{ before it hits }-1 |X_0=0).$$
Since $q_0=0,$ we must have
\begin{align}P_0(-1,n,+)=1 \text{ for }n\ge0.\no\end{align}

The following lemma is from  Cs\'aki et al. \cite{cfra}, see Lemma 2.2 therein, which  provides  formulae for the exit probabilities.
\begin{lemma}\label{ex}
For $n\ge k\ge m\ge0,$ we have
  \begin{align}
    P_k(m,n,-)=1-\frac{D(m,k)}{D(m,n)}.\no
  \end{align}
\end{lemma}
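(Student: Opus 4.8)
The plan is to recognize $P_k(m,n,-)$ as a harmonic function of the walk on the finite interval $[m,n]$ subject to natural absorbing boundary conditions, and then to solve the resulting second-order linear recurrence explicitly. First I would fix $n>m$ (the case $n=m$ forces $k=m=n$ and is degenerate, since then $D(m,n)=0$) and set $h(k):=P_k(m,n,-)$ for $m\le k\le n$. Because the interval is finite and every interior site $i$ satisfies $p_i,q_i>0$, the walk leaves $[m,n]$ in finite time almost surely, so $h$ is well defined and takes the boundary values $h(m)=1$ and $h(n)=0$. A first-step decomposition at any interior site $m<k<n$ then gives $h(k)=p_kh(k+1)+q_kh(k-1)$.

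Next I would convert this into a one-step recurrence for the increments. Writing $\Delta(k):=h(k+1)-h(k)$ and using $p_k+q_k=1$, the identity $h(k)=p_kh(k+1)+q_kh(k-1)$ rearranges to $p_k\Delta(k)=q_k\Delta(k-1)$, that is $\Delta(k)=\rho_k\Delta(k-1)$ for $m<k<n$. Iterating from $k=m$ yields $\Delta(k)=\Delta(m)\prod_{i=m+1}^{k}\rho_i$ for all $m\le k\le n-1$, where the empty product at $k=m$ returns $\Delta(m)$ itself.

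The key bookkeeping step is to telescope and identify the resulting partial sum with $D(m,k)$. Summing increments gives $h(k)-h(m)=\sum_{j=m}^{k-1}\Delta(j)=\Delta(m)\sum_{j=m}^{k-1}\prod_{i=m+1}^{j}\rho_i$. Comparing the inner double sum term by term with the definition in \eqref{ddmn} — the $j=m$ term is the empty product $1$, the $j=m+1$ term is $\rho_{m+1}$, the $j=m+2$ term is $\rho_{m+1}\rho_{m+2}$, and so on — shows this sum equals exactly $D(m,k)$. Hence $h(k)=1+\Delta(m)D(m,k)$.

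Finally, I would pin down the free constant $\Delta(m)$ using the remaining boundary condition. Setting $k=n$ gives $0=h(n)=1+\Delta(m)D(m,n)$, and since $n>m$ forces $D(m,n)\ge1>0$, this solves to $\Delta(m)=-1/D(m,n)$. Substituting back yields $h(k)=1-D(m,k)/D(m,n)$, the claimed formula, and the boundary cases $k=m$ and $k=n$ are recovered automatically from $D(m,m)=0$ and $D(m,n)/D(m,n)=1$. I do not expect a genuine obstacle here — this is the classical gambler's-ruin first-step analysis adapted to spatially varying $\rho_k$ — so the only points requiring care are the combinatorial identification of the telescoped sum with $D(m,k)$ and the handling of the empty-product and degenerate boundary conventions.
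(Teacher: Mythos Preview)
Your proof is correct: it is the classical gambler's-ruin computation for a birth-and-death chain, carried out cleanly, and the identification of the telescoped sum with $D(m,k)$ is accurate. The paper itself does not prove this lemma; it simply quotes it from Cs\'aki, F\"oldes and R\'ev\'esz \cite{cfra} (Lemma~2.2 there), so there is no ``paper's own proof'' to compare against. Your argument is exactly the standard derivation one would expect in that reference, and nothing further is needed.
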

It is easily seen from Lemma \ref{ex} that for $0\le x<y,$
\begin{align}\label{ep}
  P_{x+1}(x,y,+)=\frac{1}{D(x,y)},\  P_{x+1}(x,\infty,+)=\frac{1}{D(x)},\ P_{y-1}(x,y,+)=\frac{D(x,y-1)}{D(x,y)}.
\end{align}
Notice that these exit or escape probabilities are written in terms of
 $D(m)$ and $D(m,n)$. By some direct computation, we see that
 \begin{gather}
   D(m)=1+\rho_{m+1}D(m+1),m\ge0,\no\\
    D(m,n)=1+\rho_{m+1}D(m+1,n), n>m\ge0,\no\\
    D(m,n)=D(m,n-1)+\rho_{m+1}\cdots \rho_{n-1}, n> m\ge0.\no
 \end{gather}
 Furthermore, it is shown in Cs\'aki et al. \cite{cfrc} (see display (2.3) therein) that
\begin{align}\label{dmn}
  \frac{D(m,n)}{D(m)}=1-\prod_{i=m}^{n-1}\z(1-\frac{1}{D(i)}\y),n>m\ge 0.
\end{align}
Finally, we give the following estimations.
\begin{lemma}\label{edxy}
  Suppose that $\rho_k\rightarrow 1$ as $k\rto.$ Then, for each $\varepsilon>0$ there exists  $N>0,$ which is independent of $x,$  such that for $x>N,\  y>x+N,$ we have
  \begin{gather}
    \frac{1}{D(x)}\le \varepsilon,\ \frac{1}{D(x,y)}\le \varepsilon,\   1-\ve\le \frac{D(x-1)}{D(x)}\le 1+\ve, \no\\
    1-\ve\le\frac{D(x-1,y)}{D(x,y)}\le 1+\ve,\ 1\le \frac{D(x,y)}{D(x,y-1)}\le 1+\varepsilon.\no
  \end{gather}

\end{lemma}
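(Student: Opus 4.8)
The plan is to deduce all five estimates from the hypothesis $\rho_k\to1$ --- which means that for every $\delta>0$ there is a threshold $N_\delta$ with $1-\delta<\rho_k<1+\delta$ whenever $k>N_\delta$ --- together with the three elementary recursions $D(m)=1+\rho_{m+1}D(m+1)$, $D(m,n)=1+\rho_{m+1}D(m+1,n)$ and $D(m,n)=D(m,n-1)+\rho_{m+1}\cdots\rho_{n-1}$ recorded just before the statement. We may assume $D(m)<\infty$ for all $m$, i.e.\ the chain is transient, this being the only case relevant to the applications; in the recurrent case $D(m)\equiv\infty$ and the assertions hold in an obvious limiting sense. The main observation is that, once the bounds $1/D(x)\le\varepsilon$ and $1/D(x,y)\le\varepsilon$ are established, the first recursion (with $m=x-1$) gives $D(x-1)/D(x)=\rho_x+1/D(x)$ and the second gives $D(x-1,y)/D(x,y)=\rho_x+1/D(x,y)$; since $\rho_x\to1$, both ratios lie within $\varepsilon$ of $1$ once $x$ is large (after running the argument with $\varepsilon/2$ in place of $\varepsilon$). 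Hence everything reduces to a lower bound on $D(x,y)$ and to the single remaining upper bound $D(x,y)/D(x,y-1)\le1+\varepsilon$.

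For the lower bound, fix $\delta\in(0,1)$ so small that $\frac{1-\delta}{\delta}>1/\varepsilon$ and take $x>N_\delta$. Every summand in $D(x,y)=1+\sum_{j=1}^{y-x-1}\rho_{x+1}\cdots\rho_{x+j}$ is then at least $(1-\delta)^j$, so $D(x,y)\ge1+\sum_{j=1}^{y-x-1}(1-\delta)^j$; as $\sum_{j\ge1}(1-\delta)^j=\frac{1-\delta}{\delta}>1/\varepsilon$, there is a finite $J$ with $\sum_{j=1}^{J}(1-\delta)^j\ge1/\varepsilon$, and therefore $D(x,y)\ge1/\varepsilon$ as soon as $y-x-1\ge J$. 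Taking $N\ge\max\{N_\delta,J\}$ gives $1/D(x,y)\le\varepsilon$ for $x>N$ and $y>x+N$, and then $1/D(x)\le1/D(x,y)\le\varepsilon$ because $D(x)=\lim_nD(x,n)\ge D(x,y)$ by monotonicity of $D(x,\cdot)$.

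For the last estimate, write $T:=\rho_{x+1}\cdots\rho_{y-1}=D(x,y)-D(x,y-1)$, so that $D(x,y)/D(x,y-1)=1+T/D(x,y-1)\ge1$, and it remains to show $T\le\varepsilon D(x,y-1)$. The point is that consecutive terms of the partial sum $D(x,y-1)=1+\sum_{j=1}^{y-x-2}\rho_{x+1}\cdots\rho_{x+j}$ change only by the factor $\rho_{x+j}\in(1-\delta,1+\delta)$, so this sum contains many terms comparable to its last one: for $x>N_\delta$ its $(y-x-1-i)$-th term equals $T/(\rho_{y-i}\cdots\rho_{y-1})\ge T(1+\delta)^{-i}$, whence $D(x,y-1)\ge T\sum_{i=1}^{M}(1+\delta)^{-i}$ for any $M\le y-x-2$. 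Choosing $\delta$ so small that $1/\delta>1/\varepsilon$ and then a finite $M$ with $\sum_{i=1}^{M}(1+\delta)^{-i}\ge1/\varepsilon$ forces $T/D(x,y-1)\le\varepsilon$ whenever $y-x-2\ge M$. Enlarging $N$ so that it dominates all of the finitely many thresholds $N_\delta$, $J$, $M$ (and the ones for the ratio estimates) completes the argument; each of these auxiliary quantities depends only on $\varepsilon$, not on $x$ or $y$, as required.

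I expect this last estimate to be the main obstacle. In contrast to the other four, the increment $T$ need not be small in absolute terms --- it may even tend to $\infty$ with $y$ --- so one cannot simply compare it with the constant $1$ inside $D(x,y-1)$; it is precisely the slow variation of $\rho_k$ near $1$ that makes the tail of the sum defining $D(x,y-1)$ behave like a geometric series of ratio close to $1$ and hence overwhelm its final term. Everything else is essentially bookkeeping with geometric series and the three recursions.
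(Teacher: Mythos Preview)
Your proof is correct and follows essentially the same route as the paper. Both arguments bound $D(x,y)$ from below by the geometric series $\sum(1-\delta)^j$, obtain the ratio estimates $D(x-1)/D(x)$ and $D(x-1,y)/D(x,y)$ from the identity $\rho_x+1/D(x,y)$, and handle the last estimate by dividing the increment $T=\rho_{x+1}\cdots\rho_{y-1}$ into $D(x,y-1)$ to reduce to a reversed geometric series; the paper writes this as $D(x,y)/D(x,y-1)=1+\big(\sum_{k=x+1}^{y-1}\rho_k^{-1}\cdots\rho_{y-1}^{-1}\big)^{-1}$ and bounds $\rho_k^{-1}>1-\eta$, while you bound $\rho_k<1+\delta$ and truncate at a finite $M$, but this is only a cosmetic difference.
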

\begin{proof}Fix $0<\eta<1.$  Choose  $M_1>0,$ which is independent of $x,$ such that $(1-\eta)^{y-x-1}<\eta$ for all $y-x\ge M_1.$ Since $\lim_{k\rto}\rho_k=1,$ there is a number $M_2>0$ such that for $k\ge M_2,$ we have $1+\eta>\rho_k^{-1}\vee\rho_k\ge \rho_k^{-1}\wedge\rho_k>1-\eta.$ Let $N=M_1\vee M_2.$ Then, we have
\begin{align}
  D(x,y)&=1+\sum_{k=x+1}^{y-1}\rho_{x+1}\cdots\rho_{k}>
  1+\sum_{k=1}^{y-x-1}(1-\eta)^k\no\\
  &=\frac{1-(1-\eta)^{y-x}}{\eta}\ge \frac{1-\eta}{\eta}, \ x>N,\ y>x + N\no
\end{align}
 and
\begin{align}
  1&\le\frac{D(x,y)}{D(x,y-1)}= 1+\frac{\rho_{x+1}\cdots\rho_{y-1}}{1+\sum_{k=x+1}^{y-2}\rho_{x+1}\cdots \rho_{k}}\no\\
  &=1+\frac{1}{\sum_{k=x+1}^{y-1}\rho_{k}^{-1}\cdots \rho_{y-1}^{-1}}\le1+\frac{1}{\sum_{k=1}^{y-x-1}(1-\eta)^k}\no\\
  &=1+\frac{\eta}{(1-\eta)(1-(1-\eta)^{y-x-1})}\no\\
  &\le 1+\frac{\eta}{(1-\eta)^2}, \ x>N,\ y>x+N.\no
\end{align}
Moreover, since
$
  \frac{D(x-1,y)}{D(x,y)}=\rho_x+\frac{1}{D(x,y)},
$
we have
\begin{align*}
  1-\eta<\frac{D(x-1,y)}{D(x,y)}<1+\eta+\frac{\eta}{1-\eta}, \ x>N,\ y>x+N.
\end{align*}
 Therefore, fixing $\ve>0,$ the lemma  is proved by choosing $\eta$ small enough. \qed
\end{proof}

\subsection{Some combinatorial results}\label{s2}
 To begin with, we study the cardinality of certain simplexes.
For positive integers $a\ge j\ge i\ge 1,$ set
\begin{align}\label{saj}
  &S(a,j)=\z\{(v_1,...,v_j):\ v_k\in \mathbb Z_+/\{0\},\ j\ge k\ge1, \ \sum_{k=1}^jv_k=a\y\},\\
  &\tilde{S}(a,j)=\z\{(v_1,...,v_j):\  v_k\in \mathbb Z_+,\ j\ge k\ge1,\ v_j\ge 1, \  \sum_{k=1}^j v_k=a\y\},\no\\
  &\tilde{S}_i(a,j)=\z\{(v_1,...,v_j)\in \tilde{S}(a,j):\  \sum_{k=1}^j1_{v_k\ne0}=i\y\}.\no
\end{align}


 \begin{lemma}\label{ns} For $a\ge j\ge i\ge1,$ we have
 $|S(a,j)|=\binom{a-1}{j-1}$ and $|\tilde{S}_i(a,j)|=\binom{j-1}{i-1}\binom{a-1}{i-1}.$
    \end{lemma}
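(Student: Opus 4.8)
\textbf{Proof plan for Lemma \ref{ns}.}

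The plan is to count the two families of lattice points by classical stars-and-bars arguments. First I would handle $S(a,j)$: this is the set of compositions of $a$ into exactly $j$ strictly positive parts $v_1,\dots,v_j$. The standard bijection sends $(v_1,\dots,v_j)$ to the set of partial sums $\{v_1, v_1+v_2,\dots,v_1+\cdots+v_{j-1}\}$, a $(j-1)$-element subset of $\{1,2,\dots,a-1\}$; conversely any such subset determines the composition. Hence $|S(a,j)|=\binom{a-1}{j-1}$. Equivalently, substituting $w_k=v_k-1\ge0$ reduces to counting nonnegative solutions of $\sum_{k=1}^j w_k=a-j$, which is $\binom{(a-j)+(j-1)}{j-1}=\binom{a-1}{j-1}$.

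For $|\tilde S_i(a,j)|$ I would decompose the count according to \emph{which} coordinates are nonzero. An element of $\tilde S_i(a,j)$ is a vector $(v_1,\dots,v_j)$ with nonnegative entries, with $v_j\ge1$, with exactly $i$ of the entries nonzero, and with $\sum_k v_k=a$. Since $v_j\ne0$ is forced, choosing the support amounts to choosing the remaining $i-1$ nonzero positions among the first $j-1$ coordinates, giving $\binom{j-1}{i-1}$ choices. Once the support is fixed, the nonzero entries form a vector of $i$ strictly positive integers summing to $a$, and by the first part there are exactly $|S(a,i)|=\binom{a-1}{i-1}$ of these. Since distinct supports yield distinct vectors and every vector in $\tilde S_i(a,j)$ arises this way, summing (or rather, multiplying, as the support choice and the positive-part choice are independent) gives $|\tilde S_i(a,j)|=\binom{j-1}{i-1}\binom{a-1}{i-1}$.

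There is essentially no obstacle here — both identities are routine combinatorics. The only point requiring a little care is the bookkeeping in the second part: one must make sure the constraint $v_j\ge1$ is used to pin down one element of the support rather than being double-counted, and one must check the edge cases (e.g. $i=j$, where $\tilde S_j(a,j)=S(a,j)$ and indeed $\binom{j-1}{j-1}\binom{a-1}{j-1}=\binom{a-1}{j-1}$; and $i=1$, where the only nonzero entry is $v_j=a$, matching $\binom{j-1}{0}\binom{a-1}{0}=1$). I would state these sanity checks briefly and then conclude.
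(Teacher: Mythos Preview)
Your proposal is correct and matches the paper's own proof essentially step for step: the paper also counts $S(a,j)$ by the ``balls and dividers'' bijection to $(j-1)$-subsets of $\{1,\dots,a-1\}$, and then obtains $|\tilde S_i(a,j)|$ by first choosing the $i-1$ additional nonzero positions among the first $j-1$ coordinates and then invoking $|S(a,i)|=\binom{a-1}{i-1}$. Your extra sanity checks and the alternative $w_k=v_k-1$ derivation are fine embellishments but not needed.
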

\proof The first assertion is obvious. Indeed, arrange $a$
balls in a lineup. This lineup has $a-1$ spaces among the balls. Choose $j-1$ of those spaces and erect
a wall at each of the selected places to get an arrangement of $j$ nonempty sets of balls. Obviously,
this can be done by $\binom{a-1}{j-1}$
ways. Thus $|S(a,j)|=\binom{a-1}{j-1}.$

Next, we compute the cardinality of $\tilde{S}_i(a,j).$
 Consider a vector $(v_1,...,v_j)\in \tilde{S}_i(a,j).$ There are exactly $i$ strictly positive coordinates. Since by definition, $v_j\ge1,$ thus for some $k_1,k_2,...,k_{i-1},$ we have $v_{k_l}> 0,l=1,...,i-1$ and $v_{k_1}+\cdots+v_{k_{i-1}}+v_j=a.$ But there are $\binom{j-1}{i-1}$ ways to choose $i-1$ coordinates from $j-1$ coordinates. Therefore, we must have $$|\tilde{S}_i(a,j)|=\binom{j-1}{i-1}|S(a,i)|=\binom{j-1}{i-1}\binom{a-1}{i-1}.$$
  The lemma is proved. \qed

The next lemma is crucial for proving Theorem \ref{cs}.
\begin{lemma}\label{lems}Write $j_0=0$ and fix $k\ge 1, n_2\ge1.$ We have
\begin{align}\label{ls}
  \lim_{n\rto}\frac{1}{(\log n)^k}\sum_{\begin{subarray}{c}
    0< j_1<...<j_k\le n\\
    j_i-j_{i-1}\ge n_2, i=1,...,k
  \end{subarray}}\frac{1}{j_1(j_2-j_1)\cdots(j_k-j_{k-1})}=1.
\end{align}
  \end{lemma}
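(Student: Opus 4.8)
The plan is to prove \eqref{ls} by induction on $k$, exploiting the product structure of the summand after a change of variables. First I would set $m_i = j_i - j_{i-1}$ for $i=1,\dots,k$ (with $j_0=0$), so that the constraints become $m_i \ge n_2$ for all $i$ and $m_1+\cdots+m_k \le n$, and the summand becomes $\frac{1}{m_1 m_2\cdots m_k}$. The base case $k=1$ reduces to showing $\frac{1}{\log n}\sum_{n_2\le m\le n}\frac{1}{m}\to 1$, which is immediate from $\sum_{m\le n}\frac1m = \log n + O(1)$ and the fact that dropping the finitely many terms $m<n_2$ changes the sum by $O(1)$.

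For the inductive step I would peel off the last variable. Write the sum over $0<j_1<\cdots<j_k\le n$ with gaps $\ge n_2$ as
\begin{align}
  \sum_{j_1,\dots,j_{k-1}} \frac{1}{j_1(j_2-j_1)\cdots(j_{k-1}-j_{k-2})}\sum_{j_k} \frac{1}{j_k - j_{k-1}},\no
\end{align}
where the inner sum ranges over $j_{k-1}+n_2 \le j_k \le n$, hence equals $\sum_{m=n_2}^{n-j_{k-1}}\frac1m = \log(n-j_{k-1}) + O(1) = \log n + \log(1 - j_{k-1}/n) + O(1)$. The leading term $\log n$ contributes, after dividing by $(\log n)^k$ and invoking the induction hypothesis for $k-1$, exactly $1$. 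So the whole matter is to show that the error terms are $o((\log n)^k)$. The $O(1)$ error term contributes $O(1)\cdot\frac{1}{(\log n)^k}\sum_{j_1,\dots,j_{k-1}}\frac{1}{j_1\cdots(j_{k-1}-j_{k-2})} = O(1/\log n) \to 0$ by the induction hypothesis applied to $k-1$.

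The genuinely delicate term is $\log(1 - j_{k-1}/n)$, which blows up as $j_{k-1}\to n$; here I expect the main obstacle. I would control it by splitting the range of $j_{k-1}$ at, say, $n/2$. On $\{j_{k-1}\le n/2\}$ we have $|\log(1-j_{k-1}/n)|\le \log 2$, so this part is again $O(1)$ and handled as above. On $\{n/2 < j_{k-1}\le n\}$ I would bound $\frac{1}{j_{k-1}-j_{k-2}}\le \frac{2}{n}\cdot\frac{j_{k-1}}{j_{k-1}-j_{k-2}}$ is not quite the right move; instead, more cleanly, use that on this range $j_{k-1}\asymp n$ so $\frac{1}{j_1(j_2-j_1)\cdots(j_{k-1}-j_{k-2})}$ summed over the remaining free variables $j_1<\cdots<j_{k-2}<j_{k-1}$ with gaps $\ge n_2$ is $O((\log n)^{k-2})$ uniformly, while $\sum_{n/2<j_{k-1}\le n}\frac{1}{n}\,|\log(1-j_{k-1}/n)|$ — after writing $j_{k-1} = n - \ell$, $0\le \ell< n/2$ — becomes $\frac1n\sum_{0\le \ell<n/2}|\log(\ell/n)| = O(1)$ since $\frac1n\sum_{\ell=1}^{n}\log(n/\ell)\to \int_0^1 \log(1/t)\,dt = 1$. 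Hence this contribution is $O((\log n)^{k-2}) = o((\log n)^{k-1})$, which after division by $(\log n)^k$ vanishes. (A small bookkeeping point: one must also replace $\frac{1}{j_{k-1}-j_{k-2}}$ by something summable on $n/2<j_{k-1}\le n$ — since $j_{k-1}-j_{k-2}$ can be as small as $n_2$, I would instead not bound $j_{k-1}$ from below there but rather note the sum over all $j_1<\cdots<j_{k-1}\le n$ of $\frac{1}{j_1\cdots(j_{k-1}-j_{k-2})}$ is $O((\log n)^{k-1})$ and pair it with the uniform bound $|\log(1-j_{k-1}/n)| \le \log n$ only on a negligible sub-range, or more carefully use Abel summation in $j_{k-1}$.) Assembling the leading term (which gives $1$) and the three error estimates (each $o(1)$ after normalization) completes the induction, and hence the lemma.
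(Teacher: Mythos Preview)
Your induction-on-$k$ strategy is different from the paper's proof. The paper does not peel off one variable at a time; instead it uses the algebraic identity
\[
\frac{1}{j_1(j_2-j_1)\cdots(j_k-j_{k-1})}=\frac{1}{j_k}\prod_{i=1}^{k-1}\Bigl(\frac{1}{j_i}+\frac{1}{j_{i+1}-j_i}\Bigr)
\]
and then proves, by an iterated estimate summing from the innermost variable outward, the explicit two-sided bounds $(\log n-\log k)^k\le T_k(n)\le(k+\log n)^k$, after which the restriction to gaps $\ge n_2$ is handled separately. Your inductive route is natural and can be made to work, but as written it has a real gap precisely where you say you expect the obstacle.

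The problem is the contribution of $|\log(1-j_{k-1}/n)|$ on $\{j_{k-1}>n/2\}$. You claim that, for fixed $j_{k-1}$ in this range, the sum over $j_1<\cdots<j_{k-2}$ of $\frac{1}{j_1(j_2-j_1)\cdots(j_{k-1}-j_{k-2})}$ is $O((\log n)^{k-2})$ and then pair this with $\sum_{n/2<j_{k-1}\le n}\frac{1}{n}\,|\log(1-j_{k-1}/n)|=O(1)$. But the factor $\tfrac{1}{n}$ has no source in what you have written: your induction hypothesis only gives $T_{k-1}(n)\sim(\log n)^{k-1}$, which says nothing about the inner sum for a \emph{fixed} $j_{k-1}$. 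Your proposed repairs do not close this either --- pairing $O((\log n)^{k-1})$ with the crude bound $|\log(1-j_{k-1}/n)|\le\log n$ gives only $O((\log n)^k)$, and the Abel-summation route produces two terms of exact order $(\log n)^k$ whose cancellation you would still have to prove. The clean fix is pigeonhole on the gaps: writing $m_i=j_i-j_{i-1}$, one has $\sum_{i=1}^{k-1}m_i=j_{k-1}>n/2$, so some $m_s>\tfrac{n}{2(k-1)}$; bounding $\tfrac{1}{m_s}\le\tfrac{2(k-1)}{n}$ and summing the remaining $k-2$ gaps freely in $[n_2,n]$ gives the inner sum $\le\tfrac{2(k-1)^2}{n}\bigl(\sum_{m=n_2}^n\tfrac{1}{m}\bigr)^{k-2}=O\bigl(\tfrac{(\log n)^{k-2}}{n}\bigr)$. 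Combined with $\sum_{n/2<j_{k-1}\le n-n_2}|\log(1-j_{k-1}/n)|=O(n)$, this yields a total contribution $O((\log n)^{k-2})=o((\log n)^k)$, and your induction then goes through.
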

  \begin{remark} We have never seen formula \eqref{ls} in the literature we are aware of, so we do not know whether it is actually new or not. If that is the case, such a formula may have an independent interest in mathematical analysis.
  \end{remark}

\proof
To start with, we prove \eqref{ls} for $n_2=1.$
If $k=1,$ it is clear that $$\lim_{n\rto}\frac{1}{\log n}\sum_{j_1=1}^n \frac{1}{j_1}=1.$$  Next, we fix $k\ge2.$
 Notice  that
\begin{align}\no
  \frac{1}{j_1(j_2-j_1)\cdots(j_k-j_{k-1})}=\prod_{i=1}^{k-1}\z(\frac{1}{j_i}+\frac{1}{j_{i+1}-j_i}\y)\frac{1}{j_k}.
\end{align}
We thus have
\begin{align}\label{ps}
  &\sum_{0< j_1<...<j_k\le n}\frac{1}{j_1(j_2-j_1)\cdots(j_k-j_{k-1})}\no\\
  &\quad\quad=\sum_{j_k=k}^{n}\frac{1}{j_k}\sum_{j_{k-1}=k-1}^{j_k-1}\z(\frac{1}{j_{k-1}}+\frac{1}{j_k-j_{k-1}}\y)\cdots \sum_{j_1=1}^{j_2-1}\z(\frac{1}{j_{1}}+\frac{1}{j_2-j_{1}}\y).
  \end{align}
We claim that for $1\le i\le k-1,$ \begin{align}\label{rki}
  \sum_{j_i=i}^{j_{i+1}-1}&\z(\frac{1}{j_{i}}+\frac{1}{j_{i+1}-j_{i}}\y)i(i-1+\log j_i)^{i-1}
  \le (i+1)(i+\log j_{i+1})^i,\\
  \sum_{j_i=i}^{j_{i+1}-1}&\z(\frac{1}{j_{i}}+\frac{1}{j_{i+1}-j_{i}}\y)i(\log j_i-\log(i-1))^{i-1}\ge (i+1)(\log j_{i+1}-\log i)^i.\label{rkl}
\end{align}
Indeed, \eqref{rki} holds trivially for $i=1.$ For $2\le i\le k-1,$ since
\begin{align}
  \sum_{j_i=i}^{j_{i+1}-1}&\frac{ i(i-1+\log j_i)^{i-1}}{j_{i}}\le i\int_{i-1}^{j_{i+1}-1}\frac{(i-1+\log(x+1))^{i-1}}{x}dx\no\\
  &\le i\int_{i-1}^{j_{i+1}-1}\frac{(i+\log x)^{i-1}}{x}dx\le(i+\log j_{i+1})^i\no
\end{align}
and
\begin{align}
  \sum_{j_i=i}^{j_{i+1}-1}&\frac{i(i-1+\log j_i)^{i-1}}{j_{i+1}-j_{i}}=\sum_{j_i=1}^{j_{i+1}-i}\frac{i(i-1+\log (j_{i+1}-j_{i}))^{i-1}}{j_{i}}\no\\
  &\le \sum_{j_i=1}^{\z[j_{i+1}/2\y]}+\sum_{j_i=1+\z[j_{i+1}/2\y]}^{j_{i+1}-i}\frac{i(i-1+\log (j_{i+1}-j_{i}))^{i-1}}{j_{i}}\no\\
  &\le i(i+\log j_{i+1})^{i-1}\sum_{j_i=1}^{\z[j_{i+1}/2\y]}\frac{1}{j_i}
  +i(i+\log(j_{i+1}/2))^{i-1}\sum_{j_i=1+\z[j_{i+1}/2\y]}^{j_{i+1}-i}\frac{1}{j_i}\no\\
  &\le i(i+\log(j_{i+1}/2))^{i-1}(1+\log(j_{i+1}/2))+i\log 2(i+\log(j_{i+1}/2))^{i-1}\no\\
  &\le i(i+\log j_{i+1})^{i},\no
\end{align} then \eqref{rki} is true. We now proceed to prove \eqref{rkl}. To this end, note that
\begin{align}
  \sum_{j_i=i}^{j_{i+1}-1}&\frac{ i(\log j_i-\log(i-1))^{i-1}}{j_{i}}\ge i\int_{i}^{j_{i+1}}\frac{(\log(x-1)-\log(i-1))^{i-1}}{x}dx\no\\
  &\ge i\int_{i}^{j_{i+1}}\frac{(\log x-\log i)^{i-1}}{x}dx=(\log j_{i+1}-\log i)^i\no
\end{align}
and
\begin{align}
  \sum_{j_i=i}^{j_{i+1}-1}&\frac{i(\log j_i-\log(i-1))^{i-1}}{j_{i+1}-j_{i}}=\sum_{j_i=1}^{j_{i+1}-i}\frac{i(\log (j_{i+1}-j_i)-\log(i-1))^{i-1}}{j_{i}}\no\\
  &\ge \sum_{j_i=1}^{\z[j_{i+1}/i\y]}\frac{i(\log (j_{i+1}-j_i)-\log(i-1))^{i-1}}{j_{i}}\no\\
  &\ge i\z(\log\frac{(i-1)j_{i+1}}{i} -\log(i-1) \y)^{i-1}\sum_{j_i=1}^{\z[j_{i+1}/i\y]}\frac{1}{j_i}\no\\
  &\ge i(\log j_{i+1}-\log i)^{i-1}\log([j_{i+1}/i]+1)\no\\
  &\ge i(\log j_{i+1}-\log i)^{i}.\no
\end{align}
We thus get \eqref{rkl}.

Using \eqref{rki} and \eqref{rkl} time and again, from \eqref{ps}, we get
\begin{align}\label{ub}
  &\sum_{0< j_1<...<j_k\le n}\frac{1}{j_1(j_2-j_1)\cdots(j_k-j_{k-1})}\no\\
  &\quad\le \sum_{j_k=k}^{n}\frac{1}{j_k}k(k-1+\log j_{k})^{k-1}\le\int_{k-1}^n \frac{k(k-1+\log (x+1))^{k-1} }{x}dx  \no\\
    &\quad\le\int_{k-1}^n \frac{k(k+\log x)^{k-1} }{x}dx\le (k+\log n)^k
  \end{align}
  and
  \begin{align}\label{lbs}
  &\sum_{0< j_1<...<j_k\le n}\frac{1}{j_1(j_2-j_1)\cdots(j_k-j_{k-1})}\no\\
  &\quad\ge \sum_{j_k=k}^{n}\frac{1}{j_k}k(\log j_{k}-\log(k-1))^{k-1}\ge\int_{k}^{n+1} \frac{k(\log (x-1)-\log(k-1))^{k-1} }{x}dx  \no\\
    &\quad\ge\int_{k}^n \frac{k(\log x-\log k)^{k-1} }{x}dx= (\log n-\log k)^k.
  \end{align}
  Dividing  \eqref{ub} and \eqref{lbs} by $(\log n)^k$ and taking the upper and lower limits, we obtain
\begin{align}\label{pj1}
  \lim_{n\rto}\frac{1}{(\log n)^k}\sum_{0< j_1<...<j_k\le n}\frac{1}{j_1(j_2-j_1)\cdots(j_k-j_{k-1})}= 1.
\end{align}
    Applying \eqref{pj1}, for any $ l_0\ge 1$ and $1\le i\le k,$ we have
    \begin{align}
  \lim_{n\rto}&\frac{1}{(\log n)^k}\sum_{\begin{subarray}{c}0< j_1<...<j_k\le n,\\
  j_{i}-j_{i-1}=l_0\end{subarray}}\frac{1}{j_1(j_2-j_1)\cdots(j_k-j_{k-1})}\no\\
  &=\frac{1}{l_0}\lim_{n\rto}\frac{(\log(n-l_0))^{k-1}}{(\log n)^{k}}\frac{1}{(\log (n-l_0))^{k-1}}\no\\
  &\quad\quad\quad\quad\times\sum_{0< j_1<...<j_{k-1}\le n-l_0}\frac{1}{j_1(j_2-j_1)\cdots(j_{k-1}-j_{k-2})}\no\\
  &=0.\no
\end{align}
Consequently,  we infer that for any $1\le i\le k,$
\begin{align}\label{lsb}
  \lim_{n\rto}&\frac{1}{(\log n)^k}\sum_{\begin{subarray}{c}0< j_1<...<j_k\le n,\\
  j_{i}-j_{i-1}\le n_2\end{subarray}}\frac{1}{j_1(j_2-j_1)\cdots(j_k-j_{k-1})}=0.
\end{align}
Putting \eqref{pj1} and \eqref{lsb} together, we get \eqref{ls}. Lemma \ref{lems} is proved. \qed

%
\section{A version of the Borel-Cantelli lemma}\label{sec3}
The following Theorem \ref{fis} can be seen as a version of the Borel-Cantelli lemma. Its proof is indeed a summary of  Cs\'aki et al. \cite[Theorem 1.1]{cfrc} and James et al.
\cite[Theorem 3.1]{jlp}. But some details are different. So we provide a complete proof here. We remark that to prove Theorem \ref{main}, we just need to check one-by-one that all conditions of Theorem \ref{fis} are fulfilled.

\begin{theorem}[\cite{cfrc}, \cite{jlp}]\label{fis}
  For $n>m\ge 0,$ let $D(m)\ge 1$ and $D(m,n)\ge1$ be certain numbers.
Assume that $(\Omega,\mathcal F, \mathbb P)$ is a probability space and $\Gamma_n, n\ge 0$ are $\mathcal F$-measurable events such that \begin{align}\label{dng}
  \lim_{n\rto}D(n)\mathbb P(\Gamma_n)=\sigma
\end{align} for some $0<\sigma<\infty.$

(i) Suppose that $\sum_{n=2}^\infty\frac{1}{D(n)\log n}<\infty,$ \begin{align}\label{cki}
  \mathbb P(\Gamma_{k+1}^c\cdots \Gamma_n^c|\Gamma_i\Gamma_k)=\mathbb P(\Gamma_{k+1}^c\cdots \Gamma_n^c|\Gamma_k),\  0\le i\le k<n,
  \end{align}
and there exists a number $N_1>0$ such that
\begin{align}
  D(m,n)\leq c(n-m),\ \mathbb P(\Gamma_m|\Gamma_n)>\frac{c }{D(m,n)},\ n > m> N_1.\label{dxyc}
\end{align}
Then we have $\mathbb P\z(\bigcap_{i=0}^\infty \bigcup_{j=i}^\infty \Gamma_j\y)=0.$

(ii) Suppose that $\sum_{n=2}^\infty\frac{1}{D(n)\log n}<\infty,$ there exist $n_0>0$ and $\delta>0$ such that  $D(n)\le \delta n\log n,$ for all $n\ge n_0$ and \begin{align}\label{dmna}
  \frac{D(m,n)}{D(m)}=1-\prod_{i=m}^{n-1}\z(1-\frac{1}{D(i)}\y),\ n>m\ge 0.
\end{align}
In addition, assume that for each $\ve>0,$ there exists   $N>0$ such that $D(k),k\ge N$ is increasing in $k,$ and
\begin{align}
  \frac{D(m,n)}{D(m)}\frac{\mathbb P(\Gamma_m\Gamma_{n})}{\mathbb P(\Gamma_m)\mathbb P(\Gamma_{n})}
  \le1+\varepsilon, \  m>N,\ n-m>N.\label{ppt}
  \end{align}
  Then we have $\mathbb P\z(\bigcap_{i=0}^\infty \bigcup_{j=i}^\infty \Gamma_j\y)=1.$
\end{theorem}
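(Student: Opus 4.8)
The plan is to verify the convergent statements (i)--(iii) and the divergent statements (i)--(iii) one at a time; as each is an instance of $\mathbb P\big(\bigcap_i\bigcup_{j\ge i}\Gamma_j\big)=0$ or $=1$ for $\Gamma_j=\{j\in C\}$ (with $C=C(A,a),C(a,B)$ or $C_w$), it suffices to establish the dichotomy of Theorem \ref{fis}. Throughout write $\Gamma_{[m,n]}^{c}:=\Gamma_m^c\cap\dots\cap\Gamma_n^c$ and $Z_{m,n}:=\sum_{k=m}^{n}\mathbf 1_{\Gamma_k}$, and note that \eqref{dng} gives $c^{-1}/D(n)\le\mathbb P(\Gamma_n)\le c/D(n)$ for all large $n$. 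For part (i) I would localise to the dyadic blocks $I_j=(2^{j},2^{j+1}]$ and show $\sum_j\mathbb P\big(\bigcup_{n\in I_j}\Gamma_n\big)<\infty$; the ordinary Borel--Cantelli lemma then forces only finitely many $n$ with $\Gamma_n$, i.e. $\mathbb P\big(\bigcap_i\bigcup_{j\ge i}\Gamma_j\big)=0$. The point inside each block is a clumping estimate: on the event $\{\ell_j=n\}$ with $\ell_j:=\max\{k\in I_j:\Gamma_k\}$, the Markov-type hypothesis \eqref{cki} (with $i=m\le k=n$) lets me peel off the conditioning on $\Gamma_{[n+1,\,2^{j+1}]}^{c}$, so that $\mathbb P\big(\Gamma_m\mid\Gamma_n,\Gamma_{[n+1,\,2^{j+1}]}^{c}\big)=\mathbb P(\Gamma_m\mid\Gamma_n)$ for $m<n$; combined with $\mathbb P(\Gamma_m\mid\Gamma_n)>c/D(m,n)\ge c/(c(n-m))$ from \eqref{dxyc} this yields, for $n\in I_j$ and $j$ large,
\[
\mathbb E\big[Z_{\lceil n/2\rceil,\,n-1}\mid \ell_j=n\big]\ \ge\ c\sum_{k=1}^{\lfloor n/2\rfloor}\frac1k\ \ge\ c'j .
\]
Since $(n/2,n)\subseteq I_{j-1}\cup I_j$ and the events $\{\ell_j=n\}$, $n\in I_j$, partition $\bigcup_{n\in I_j}\Gamma_n$, summing over them gives $\mathbb P\big(\bigcup_{n\in I_j}\Gamma_n\big)\le(c'j)^{-1}\mathbb E\,Z_{2^{j-1}+1,\,2^{j+1}}\le(c''/j)\sum_{n\in I_{j-1}\cup I_j}D(n)^{-1}$, and because $j\asymp\log n$ on $I_{j-1}\cup I_j$ and each $n$ lies in at most two such unions, this sums to $\le c\sum_n(D(n)\log n)^{-1}<\infty$. (The bound $D(m,n)\le c(n-m)$ of \eqref{dxyc} is used exactly to make the inner sum logarithmic.)

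For part (ii) I would combine a second moment argument with a zero--one step. First $\mathbb E\,Z_{1,N}\to\infty$: from $D(n)\le\delta n\log n$ and \eqref{dng}, $\mathbb P(\Gamma_n)\ge c/(n\log n)$, so $\mathbb E\,Z_{1,N}\ge c\log\log N$. Next, $\mathbb E\,Z_{m,N}^{2}=\mathbb E\,Z_{m,N}+2\sum_{m\le k<l\le N}\mathbb P(\Gamma_k\Gamma_l)$; the pairs with $k$ or $l-k$ below the threshold of \eqref{ppt} contribute only $O(\mathbb E\,Z_{m,N})$, while for the remaining pairs \eqref{ppt} gives $\mathbb P(\Gamma_k\Gamma_l)\le(1+\varepsilon)\frac{D(k)}{D(k,l)}\mathbb P(\Gamma_k)\mathbb P(\Gamma_l)$, where by \eqref{dmna} the factor equals $\big(1-\prod_{i=k}^{l-1}(1-1/D(i))\big)^{-1}$; using $\mathbb P(\Gamma_k)\asymp\sigma/D(k)$ and the eventual monotonicity of $D$ one obtains $\mathbb E\,Z_{m,N}^{2}\le C(\mathbb E\,Z_{m,N})^{2}$ (possibly after first reorganising the double sum over blocks adapted to the correlation scale, so that each block contributes a bounded amount). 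The Kochen--Stone lemma, equivalently Paley--Zygmund applied to $Z_{m,N}$ as $N\to\infty$, then yields $\mathbb P\big(\bigcup_{n\ge m}\Gamma_n\big)\ge 1/C>0$ for every $m$. Finally one upgrades this uniform positive lower bound to the value $1$ by showing $\mathbb P\big(\bigcap_{n\ge m}\Gamma_n^{c}\big)=0$: this is a restart argument, the regeneration in the spirit of the strong Markov property underlying \eqref{cki} allowing the second moment estimate to be re-run on the tail of the sequence after the last occurrence preceding level $m$, whence $\mathbb P\big(\bigcap_i\bigcup_{j\ge i}\Gamma_j\big)=1$.

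In part (i) the only genuine work is the clumping estimate above and the bookkeeping of the $o(1)$ errors coming from \eqref{dng}. In part (ii) the plain second moment is not enough on its own: \eqref{ppt} overcounts clumped occurrences by a bounded factor, so it gives only a positive rather than a full lower bound, and the bound $\mathbb E\,Z^{2}\le C(\mathbb E\,Z)^{2}$ is delicate because $D(n)$ may carry any slowly varying correction above $n$ as long as $\sum 1/(D(n)\log n)=\infty$, forcing a careful analysis of $\sum_{k<l}1/D(k,l)$ via \eqref{dmna}. The decisive step, which I expect to be the hardest, is the zero--one upgrade in (ii) --- turning ``$\mathbb P(\text{some later }\Gamma_n)\ge c_0$'' into ``$=1$'' --- since that is where the proof must use the regeneration structure of the walk, encoded abstractly by \eqref{cki}, and not merely the inequalities \eqref{dng}--\eqref{ppt}.
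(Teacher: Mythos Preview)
Your argument for part~(i) is essentially the paper's own: dyadic blocks $(2^j,2^{j+1}]$, condition on the last occurrence $\ell_j$ in the block, use \eqref{cki} to strip the tail conditioning, then \eqref{dxyc} to harvest a logarithmic number of expected earlier occurrences, and finally sum and apply Borel--Cantelli. That part is fine.

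Part~(ii), however, has a genuine gap. You propose to upgrade the Kochen--Stone lower bound $\mathbb P(\bigcup_{n\ge m}\Gamma_n)\ge 1/C$ to probability~$1$ via ``regeneration in the spirit of the strong Markov property underlying \eqref{cki}''. But \eqref{cki} is a hypothesis only of part~(i); it is \emph{not} assumed in part~(ii), so no restart argument is available at this level of abstraction. The paper does something quite different and avoids any zero--one step altogether. The key observation you miss is that the quantifier in \eqref{ppt} is ``for each $\varepsilon>0$'': this means the second-moment constant can be pushed arbitrarily close to~$1$. Concretely, the paper passes to the subsequence $m_k=[k\log k]$, shows $\sum_k\mathbb P(\Gamma_{m_k})=\infty$ via equiconvergence of $\sum 1/D([k\log k])$ and $\sum 1/(D(k)\log k)$, and splits the off-diagonal pairs $(k,l)$ at a threshold $\ell_1$ determined by $\sum_{i=m_k}^{m_l-1}1/D(i)$. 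For $l\ge\ell_1$ one gets $\mathbb P(\Gamma_{m_k}\Gamma_{m_l})\le(1+\varepsilon)^2\mathbb P(\Gamma_{m_k})\mathbb P(\Gamma_{m_l})$ directly from \eqref{ppt} and \eqref{dmna}; for $k<l<\ell_1$ the bound $D(n)\le\delta n\log n$ forces $\log\ell_1/\log k$ to be bounded, so those terms contribute only $O(\sum_k\mathbb P(\Gamma_{m_k}))$. Petrov's generalized Borel--Cantelli then yields $\mathbb P(\Gamma_{m_k}\text{ i.o.})\ge 1/(1+\varepsilon)^2$, and since the left side does not depend on $\varepsilon$, sending $\varepsilon\to0$ gives probability~$1$ immediately. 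So the ``hardest step'' you anticipate simply does not exist in the correct proof; the leverage is the arbitrariness of $\varepsilon$ in \eqref{ppt}, together with the subsequence $m_k$ which you also omit and which is what makes the near-diagonal sum tractable.
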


\proof   We prove part (i) first. Suppose all conditions of part (i) are fulfilled.
 For $0\le m<k,$ set $A_{m,k}:=\sum_{j=2^{m}+1}^{2^k}1_{\Gamma_j},$ $l_m=\max\z\{2^{m}+1<j\le 2^{m+1}:\ 1_{\Gamma_j}=1\y\}$
 and write
$$  b_m:=\min_{k\in (2^{m},2^{m+1}]}\sum_{i=1}^{2^{m-1}} \frac{1}{D(k-i,k)}.
$$
Let $N_1$ be as in \eqref{dxyc}.
 Taking \eqref{cki} and \eqref{dxyc} into consideration, for  $m\ge 1+\frac{\log N_1}{\log 2}=:M_1$ (or equivalently $2^{m-1}>N_1$), $2^m<k\le  2^{m+1}$ and $2^{m-1}< i<k$ we have
\begin{align}
  \mathbb P&(\Gamma_i|A_{m,m+1}>0,l_m=k)=
  \frac{\mathbb P\z(\Gamma_i\Gamma_k\Gamma_{k+1}^c\cdots\Gamma_{2^{m+1}}^c\y)}{\mathbb P\z(\Gamma_k\Gamma_{k+1}^c\cdots\Gamma_{2^{m+1}}^c\y)}
    =\mathbb P(\Gamma_i|\Gamma_k)\ge \frac{c}{D(i,k)}.\no
  \end{align}
Thus, for  $m\ge M_1,$
       \begin{align}
        \sum_{j=2^{m-1}+1}^{2^{m+1}}&\mathbb P(\Gamma_j)=\mathbb E\z(A_{m-1,m+1}\y)\no\\
              &\ge \sum_{k=2^{m}+1}^{2^{m+1}}\mathbb P(A_{m,m+1}>0,l_m=k) \mathbb E(A_{m-1,m+1}|A_{m,m+1}>0,l_m=k)\no\\
              &= \sum_{k=2^{m}+1}^{2^{m+1}}\mathbb P(A_{m,m+1}>0,l_m=k) \sum_{i={2^{m-1}+1}}^kP(\Gamma_i|A_{m,m+1}>0,l_m=k)\no\\
              &\ge c \mathbb P(A_{m,m+1}>0)\min_{2^{m}< k \le 2^{m+1}} \sum_{i=1}^{2^{m-1}}\frac{1}{D(k-i,k)}\no\\
              &=c b_m \mathbb P(A_{m,m+1}>0).\label{pc}
      \end{align}
Using \eqref{dxyc}, we get $
  b_m\ge c\sum_{i=1}^{2^{m-1}}\frac{1}{i}\ge cm
$
for $m>M_1.$ Since $\mathbb P\z(\Gamma_n\y)\sim \sigma/D(n),$ as $n\rto,$
 thus,  from \eqref{pc}, we infer that
\begin{align*}
\sum_{m=M_1}^\infty &\mathbb P(A_{m,m+1}>0)\le c\sum_{m= M_1}^\infty\frac{1}{b_m}\sum_{j=2^{m-1}+1}^{2^{m+1}}\mathbb P(j\in C)\\
&\le c\sum_{m=M_1}^\infty\frac{1}{m}\sum_{j=2^{m-1}+1}^{2^{m+1}} \frac{1}{D(j)}\le c \sum_{m=1}^\infty \sum_{j=2^{m-1}+1}^{2^{m+1}} \frac{1}{D(j)\log j}\no\\
&\le c\sum_{n=2}^\infty \frac{1}{D(n)\log n}<\infty.
\end{align*}
Applying  the Borel-Cantelli lemma, we get  $\mathbb P\z(\bigcap_{n=1}^\infty\bigcup_{m=n}^\infty\{A_{m,m+1}>0\}\y)=0.$ Thus,  $\mathbb P\z(\bigcap_{i=0}^\infty \bigcup_{j=i}^\infty \Gamma_j\y)=0.$ Part (i) is proved.


Next, we  prove  part (ii). Suppose all conditions of part (ii) hold.
For $k\ge 1,$ set
$m_k=[k\log k].$  Since $\mathbb P\z(\Gamma_n\y)\sim \sigma/D(n),$ as $n\rto,$ there exists a number $M_2>0$ such that $\mathbb P(\Gamma_{m_k})>\sigma/2$ for $k\ge M_2.$ Notice that $D(n)$ is increasing in $n>N.$ It follows from Cs\'aki et al. \cite[Lemma 2.2]{cfrc} that $\sum_{k=M_2}^{\infty}\frac{1}{D([k\log k])}$ and $\sum_{k=M_2}^{\infty}\frac{1}{D(k)\log k}$ are equiconvergent. Therefore, we get
\begin{align}
  \sum_{k=M_2}^{\infty}\mathbb P(\Gamma_{m_k})\ge \frac{\sigma}{2}\sum_{k=M_2}^{\infty}\frac{1}{D([k\log k])}=\infty. \label{syi}
\end{align}
Now fix $\varepsilon >0.$
Obviously,
 for $l>k,$
$m_l-m_k\ge l\log l-k\log k\ge \log k.$  Then, there exists a number $M_3>0$ such that for all $k\ge M_3,$
$m_k\ge N, m_l-m_k>N.$
Therefore, it follows from \eqref{dmna} and \eqref{ppt} that for $l>k>M_3,$
\begin{align}\label{e3}
\mathbb P(\Gamma_{m_k}\Gamma_{m_l})
&\leq (1+\varepsilon)\mathbb P(\Gamma_{m_k})\mathbb P(\Gamma_{m_l})\z(\frac{D(m_k,m_l)}{D(m_k)}\y)^{-1}\nonumber\\
&=(1+\varepsilon)\z(1-\prod_{i=m_k}^{m_l-1}\z(1-\frac{1}{D(i)}\y)\y)^{-1}\mathbb P(\Gamma_{m_k})\mathbb P(\Gamma_{m_l})\no\\
&\le(1+\varepsilon)\z(1-\exp\z\{-\sum_{i=m_k}^{m_l-1}\frac{1}{D(i)}\y\}\y)^{-1}\mathbb P(\Gamma_{m_k})\mathbb P(\Gamma_{m_l}).
\end{align}
Define
\begin{align}
\ell_1=\min\z\{l\geq k:\sum_{i=m_k}^{m_l-1}\frac{1}{D(i)}\geq \log \frac{1+\varepsilon}{\varepsilon}\y\}.\no
\end{align}
Then for $l\geq \ell_1,$ we have
$\z(1-\exp\z\{-\sum_{i=m_k}^{m_l-1}\frac{1}{D(i)}\y\}\y)^{-1}\leq 1+\varepsilon.$
Therefore, it follows from (\ref{e3}) that
\begin{align}
\mathbb P(\Gamma_{m_k}\Gamma_{m_l})\leq (1+\varepsilon)^2\mathbb P(\Gamma_{m_k})\mathbb P(\Gamma_{m_l}), \text{  for } l\geq \ell_1.\label{e6}
\end{align}
Next, we consider $k<l<\ell_1.$ Note that for $0\leq u\leq \log \frac{1+\varepsilon}{\varepsilon}$, we have $1-e^{-u}\geq cu$. Since $D(i),i\ge m_k$ is increasing in $i,$ thus, using again the fact $\mathbb P\z(\Gamma_n\y)\sim \sigma/D(n),$ as $n\rto,$ from (\ref{e3}) we deduce that
\begin{align}
\mathbb P(\Gamma_{m_k}\Gamma_{m_l})&\le(1+\varepsilon)\z(1-\exp\z\{-\sum_{i=m_k}^{m_l-1}\frac{1}{D(i)}\y\}\y)^{-1}\mathbb P(\Gamma_{m_k})\mathbb P(\Gamma_{m_l})\nonumber\\
&\leq\frac{cP(\Gamma_{m_k})\mathbb P(\Gamma_{m_l})}{\sum_{i=m_k}^{m_l-1}\frac{1}{D(i)}}
\leq \frac{cP(\Gamma_{m_k})\mathbb P(\Gamma_{m_l})D(m_l)}{m_l-m_k}\le \frac{cP(\Gamma_{m_k})}{l\log l-k\log k}.\no
\end{align}
Since $D(n)\le \delta n\log n,$ for all $n\ge n_0,$  it can be shown that
$
\frac{\log \ell_1}{\log k}\leq \gamma$
for some constant $\gamma$ depending only on $\varepsilon,$ see \cite[p.635, display (4.2)]{cfrc}.
Thus,
\begin{align}
\sum_{l=k+1}^{\ell_1-1}&\mathbb P(\Gamma_{m_k}\Gamma_{m_l})\leq cP(\Gamma_{m_k})\sum_{l=k+1}^{\ell_1-1}\frac{1}{l\log l-k\log k}\nonumber\\
&\leq cP(\Gamma_{m_k})\frac{1}{\log k}\sum_{l=k+1}^{\ell_1-1}\frac{1}{l-k}\leq cP(\Gamma_{m_k})\frac{\log \ell_1}{\log k}\le cP(\Gamma_{m_k}).\label{e8}
\end{align}
Taking (\ref{e6}) and (\ref{e8}) together, we have
$$\sum_{k=M_3}^{n}\sum_{l=k+1}^{n}\mathbb P(\Gamma_{m_k}\Gamma_{m_l})\leq\sum_{k=M_3}^{n}\sum_{l=k+1}^{n}(1+\varepsilon)^2\mathbb P(\Gamma_{m_k})\mathbb P(\Gamma_{m_l})+c\sum_{k=M_3}^{n}\mathbb P(\Gamma_{m_k}).$$
Writing $H(\varepsilon)=(1+\varepsilon)^2$, owing to (\ref{syi}) , we have
\begin{align*}
\alpha_{H(\varepsilon)}&:=\varliminf_{n\rightarrow\infty}\frac{\sum_{k=M_3}^{n}\sum_{l=k+1}^{n}\mathbb P(\Gamma_{m_k}\Gamma_{m_l})-\sum_{k=M_3}^{n}\sum_{l=k+1}^{n}H(\varepsilon)\mathbb P(\Gamma_{m_k})\mathbb P(\Gamma_{m_l})}{[\sum_{k=M_3}^{n}\mathbb P(\Gamma_{m_k})]^2}\nonumber\\
&\leq\lim_{n\rightarrow\infty}\frac{c}{\sum_{k=M_3}^{n}\mathbb P(\Gamma_{m_k})}=0.
\end{align*}
An application of a generalized version of the Borel-Cantelli lemma (see Petrov \cite[p. 235]{pe04}) yields that
$\mathbb P\z(\bigcap_{k=M_3}^\infty\bigcup_{j=k}^\infty\Gamma_{m_j}\y)\geq\frac{1}{H(\varepsilon)+2\alpha_{H(\varepsilon)}}\ge\frac{1}{(1+\varepsilon)^2}.$
As a consequence, we get
$
  \mathbb P\z(\bigcap_{i=0}^\infty \bigcup_{j=i}^\infty \Gamma_j\y)\ge \mathbb P\z(\bigcap_{k=M_3}^\infty\bigcup_{j=k}^\infty\Gamma_{m_j}\y) \ge\frac{1}{(1+\varepsilon)^2}.
$
Since $\varepsilon$ is arbitrary, letting $\varepsilon\rightarrow0,$ we conclude that $\mathbb P\z(\bigcap_{i=0}^\infty \bigcup_{j=i}^\infty \Gamma_j\y)=1.$ Part (ii)  is proved. \qed

\section{Upcrossing times represented by loops, moves and escapes}\label{sec4}

In this section, by decomposing the path of the walk, we represent the upcrossing times $\xi(n,\uparrow),n\ge 0$ by certain loops, moves and escapes. Throughout this section, we assume $D(0)<\infty.$ As a consequence, we see from Proposition \ref{crt} that $\lim_{n\rto}X_n=\infty$ almost surely.
\subsection{Loops, moves and escapes}

Now, considering the random walk $X,$ we introduce the following  events. For integers $0\le x<y,$
define \begin{align*}
  FL_x &=\z\{\begin{array}{l}
    \text{starting from }x, \text{ the walk jumps upward to }x+1, \text{ and then, }\\
     \text{after a number of steps, it returns to } x \text{ before going to }\infty
  \end{array}\y\},\\
FL_{xy}&=\z\{\begin{array}{l}
    \text{starting from }x, \text{ the walk jumps upward to }x+1, \text{ and then, }\\
     \text{after a number of steps, it returns to } x \text{ before it hits }y
  \end{array}\y\},\\
  F_{xy}&=\z\{\begin{array}{l}
    \text{starting from }x, \text{ the walk jumps upward to }x+1, \text{ and then, }\\
     \text{after a number of steps, it hits } y \text{ before  returning to }x
  \end{array}\y\},\\
  F_x&=\{\text{starting from }x, \text{ the walk jumps to } x+1 \text{ and never returns to }x\},
  \end{align*}
 and
  \begin{align*}
  BL_x&=\z\{\begin{array}{l}
    \text{starting from }x, \text{ the walk jumps downward to }x-1, \\
     \text{and then,}\text{ after a number of steps,}\text{ it returns to }x
  \end{array}\y\},\\
  BL_{yx} &=\z\{\begin{array}{l}
    \text{starting from }y, \text{ the walk jumps downward to }y-1, \text{ and }\\
     \text{then, after a number of steps, it returns to } y \text{ before hitting }x
  \end{array}\y\},\\
        B_{yx}&=\z\{\begin{array}{l}
    \text{starting from }y, \text{ the walk jumps downward to }y-1, \text{ and }\\
     \text{then, after a number of steps, it hits } x \text{ before  returning to }y
  \end{array}\y\}.
\end{align*}

\begin{figure}[ht]
  \centering
  \includegraphics[width=12cm]{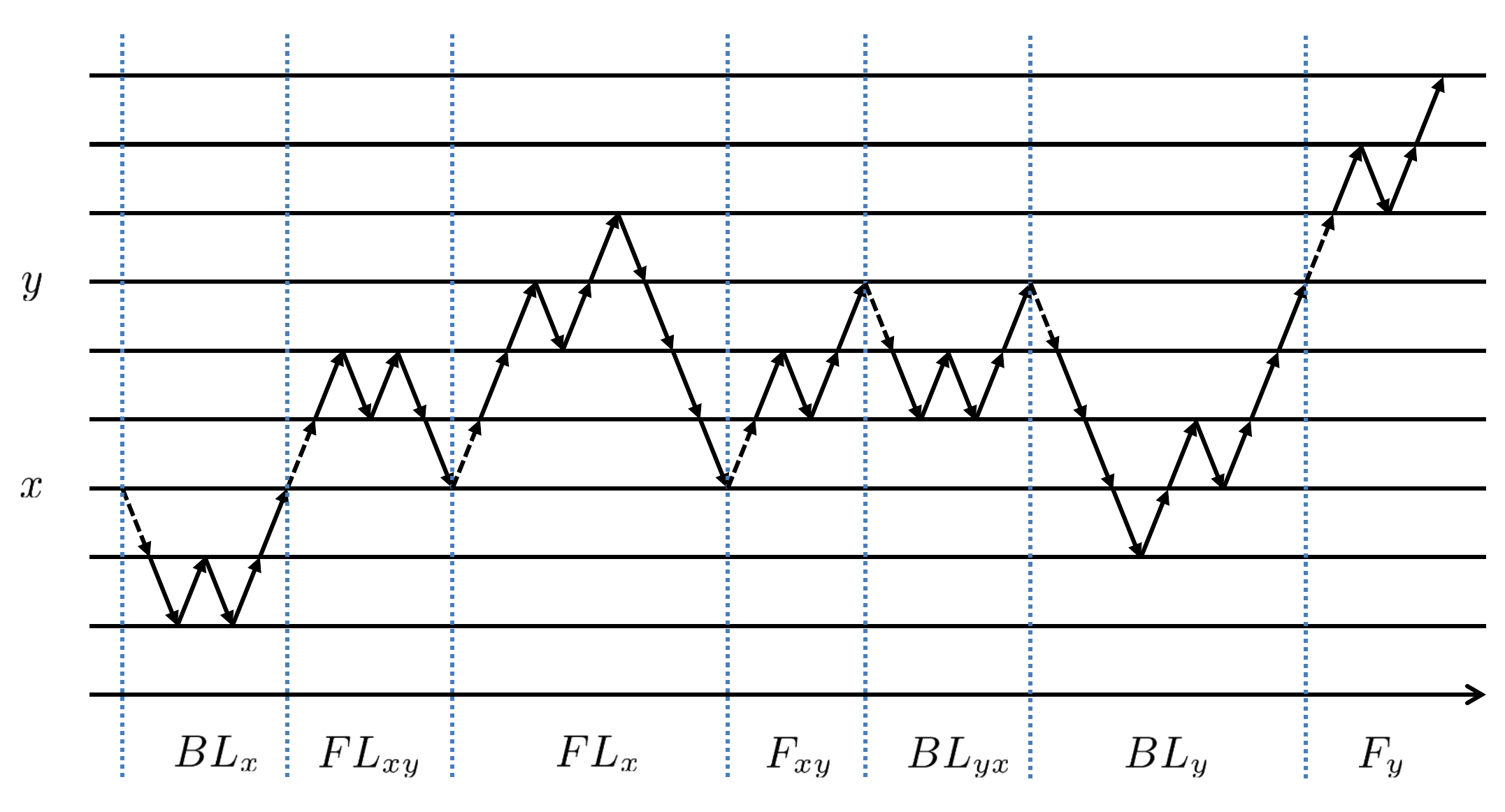}\\
  \vspace{-.2cm}

  \caption{ The figure illustrates the sample paths  of the loops, moves, and escapes defined in Definition \ref{dlm}. They all begin with a dashed arrow. Some special remarks should be added to $FL_x$ and $BL_y.$ Notice that the sample path of $FL_x$ in the figure has reached some site above $y.$ But by definition, for a forward loop $FL_x,$ the walk may or may not visit the sites above $y.$ The path of $BL_y$ should be understood similarly.}  \label{fig1}
\end{figure}
\begin{definition}\label{dlm}
  If the event $FL_x,$ $FL_{xy},$  $F_{xy},$ or $F_x$ occurs, we say that a forward loop  at $x,$  a forward loop at $x$ avoiding $y,$  a forward move from $x$ to $y,$ or an escape from $x$ to $\infty$ appears, respectively.
  Similarly, if the event $BL_{x},$ $BL_{yx},$ or $B_{yx}$ occurs, we say that a backward loop at $x,$ a backward loop at $y$ avoiding $x,$ or  a backward move from $y$ to $x$ appears, respectively. When there is no danger of confusion, these notations of events are also used to represent the loop, move, or escape that they define. For example, if we say there is a forward loop $FL_x,$ we mean that a forward loop appears at $x.$
\end{definition}

The sample paths of these loops, moves and escapes are illustrated in Figure \ref{fig1}.
 Consulting \eqref{pmnm}, \eqref{pmnp} and \eqref{ep} time and again, we get
\begin{align}
\mathbb P(FL_x)&=p_xP_{x+1}(x,\infty,-)=p_x\z(1-\frac{1}{D(x)}\y),\label{pflx}\\
    \mathbb P(FL_{xy})&=p_xP_{x+1}(x,y,-)=p_x\z(1-\frac{1}{D(x,y)}\y),\label{pflxy}\\
  \mathbb P(F_{xy})&=p_xP_{x+1}(x,y,+)=\frac{p_x}{D(x,y)},\label{pfxy}\\
  \mathbb P(F_x)&=p_xP_{x+1}(x,\infty,+)=\frac{p_x}{D(x)},\label{pfy}
\end{align}
and
\begin{align}
\mathbb P(BL_x)&=q_x,\label{pblx}\\
  \mathbb P(BL_{yx})&=q_yP_{y-1}(x,y,+)=\frac{q_y D(x,y-1)}{D(x,y)},\label{pblyx}\\
    \mathbb P(B_{yx})&=q_yP_{y-1}(x,y,-)=q_y\z(1-\frac{D(x,y-1)}{D(x,y)}\y).\label{pbyx}
  \end{align}

\subsection{Upcrossing times and branching processes with immigration}

Since the chain $X$ is transient, for each $x\in \mathbb Z_+,$ there must be an escape $F_x$ at $x.$ For $x\ge0,$ $k\ge1,$ if $\xi(x,\uparrow)=k,$ then there must be $k-1$ forward loops $FL_x$ and one escape $F_x$ at $x.$

Fixing $x\ge 1$ and $k\ge0,$ on the event $\{\xi(x-1,\uparrow)=k+1\},$ let
\begin{align}\label{det}
  \eta(x,i)=&\text{ the number of forward loops }FL_{x}\text{ at }x\text{ contained}\no\\
   &\text{ in the }i\text{-th forward loop }FL_{x-1}\text{ at } x-1, \text{ for }i\ge1
\end{align}
and
\begin{align}\label{dzt}
  \zeta(x)=&\text{ the number of forward loops }FL_{x}\text{ at }x\text{ contained}\no\\
   &\text{ in the escape }F_{x-1}\text{ at } x-1.
\end{align}
\begin{figure}
  \centering
  \includegraphics[width=12cm]{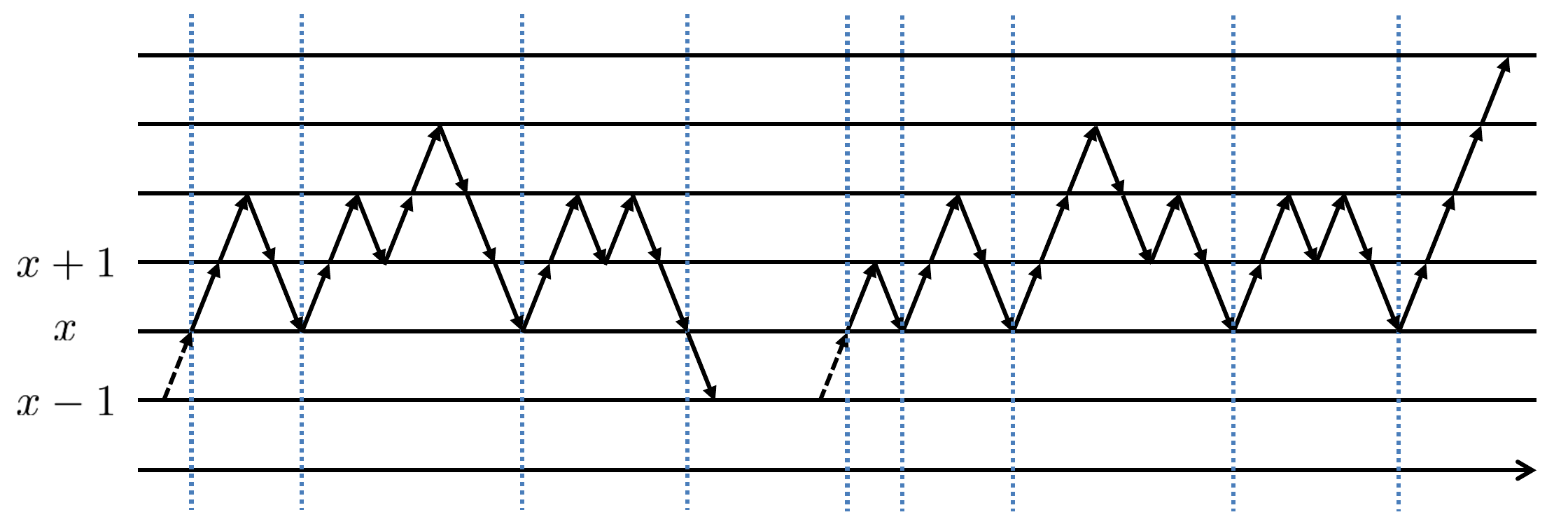}\\
  \vspace{-.1cm}

  \caption{The left part of the figure is the sample path of a forward loop $FL_{x-1}$ at $x-1,$ it contains $3$ forward loops $FL_{x}$ at $x.$ The right part is the sample path of an escape $F_{x-1}$ from $x-1$ to $\infty.$ It contains $4$ forward loops $FL_{x}$ at $x.$    }\label{fig2}
\end{figure}
Conditioned on the event $\{\xi(x-1,\uparrow)=k+1\},$ by strong Markov property, the paths contained in those $k$ forward loops $FL_{x-1}$ are  i.i.d. and moreover, they are all independent of the path contained in the  escape $F_{x-1}.$ Therefore, we conclude that
\begin{align}\label{xikr}
  \xi(x,\uparrow)=\sum_{i=1}^{\xi(x-1,\uparrow)-1}\eta(x,i)+\zeta(x)+1, \end{align}
where $\eta(x,i), i\ge1$ are i.i.d. and they are all independent of $\zeta(x),$ the number $1$  represents the escape $F_x$ from $x$ to $\infty.$
Indeed, from the definition in \eqref{det} and \eqref{dzt}, by the strong Markov property, $\eta(x,i),\zeta(x),x\ge1,i\ge1$ are mutually independent. Thus, \eqref{xikr} says indeed that
\begin{align}\label{xm}
  \{\xi(x,\uparrow)\}_{x=0}^\infty \text{ is a Markov chain (or a branching process with immigration).}
\end{align}

To study the weak cutpoints, we need to introduce a new chain related to the upcrossing times.  Let $Y_0=0,$ and for $x\ge 1,$ let
\begin{align*}
  Y_x=&\text{ number of forward loops }FL_{x}\text{ at }x\text{ contained}\\
  &\text{ in all forward loops }FL_{x-1}\text{ at }x-1.
\end{align*}
Then, in view of \eqref{det}, \eqref{dzt} and \eqref{xikr}, we have
\begin{align}\label{yyz}
  Y_x=\sum_{i=1}^{Y_{x-1}+\zeta(x-1)}\eta(x,i),\ x\ge1.
\end{align}
Clearly, by the strong Markov property, $Y_{x-1}$ and $\zeta(x-1)$ are mutually independent.  Thus, we come to the conclusion that
\begin{align}\label{ym}
  \{Y_x\}_{x=0}^\infty \text{ is a Markov chain (or a branching process with immigration).}
\end{align}

Now, we consider the sets $C(a,b),$ $C_w$ and $C(*,a)$  defined in Definition \ref{def2} and \eqref{cstar}.
Clearly,
\begin{align}
  &\{x\in C(*,a)\}=\{\xi(x,\uparrow)=a\},\label{csa}\\
  &\{x\in C(a,b)\}=\{\xi(x,\uparrow)=b,\  \xi(x-1,\uparrow)=a-b+1\},\label{cabx}\\
  &\{x\in C_w\}=\{Y_x=0\}.\label{cwy}
\end{align}
From \eqref{xm} and \eqref{ym} we see that both $\{\xi(x,\uparrow)\}_{x=0}^\infty$ and $\{Y_x\}_{x=0}^\infty$ are Markov chains.
Thus, by the Markov property, we get directly the following lemma.
\begin{lemma}\label{lemm}
  Suppose $D(1)<\infty.$ Then we have
  \begin{align*}
  \mathbb P( j\notin & C, k+1\le j\le n|i\in C,k\in C)=\mathbb P( j\notin  C, k+1\le j\le n|k\in C),
  \end{align*} for $0\le i\le k<n,$ $C=C(a,b),$ $C(*,a)$ or $C_w.$
\end{lemma}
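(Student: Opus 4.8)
The plan is to reduce the statement to the Markov property of the two auxiliary chains $\{\xi(x,\uparrow)\}_{x\ge 0}$ and $\{Y_x\}_{x\ge 0}$, which have already been established in \eqref{xm} and \eqref{ym}. The key point is that for each of the three choices of $C$, membership of a site $j$ in $C$ is a function of finitely many coordinates of one of these chains at indices $j-1,j$; once this is made precise, the claimed identity is a straightforward consequence of the Markov property together with the fact that the conditioning events $\{i\in C\}$ and $\{k\in C\}$ and the event $\{j\notin C,\ k+1\le j\le n\}$ involve the chain only at times $\le k$ and $\ge k$ respectively.

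First I would treat $C=C_w$, which is the cleanest case: by \eqref{cwy}, $\{x\in C_w\}=\{Y_x=0\}$, so both the conditioning event $\{i\in C_w,k\in C_w\}$ and the event $\{j\notin C_w,\ k+1\le j\le n\}$ are measurable with respect to $\sigma(Y_0,\dots,Y_k)$ and $\sigma(Y_k,\dots,Y_n)$ respectively; since $\{Y_x\}$ is a Markov chain and $\{Y_i=0,Y_k=0\}$ determines $Y_k$, the Markov property at time $k$ gives $\mathbb P(j\notin C_w,\,k+1\le j\le n\mid Y_i=0,Y_k=0)=\mathbb P(j\notin C_w,\,k+1\le j\le n\mid Y_k=0)$, which is exactly the assertion. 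The case $C=C(*,a)$ is identical with $\{Y_x\}$ replaced by $\{\xi(x,\uparrow)\}$ and the state $0$ replaced by the state $a$, using \eqref{csa}. For $C=C(a,b)$, by \eqref{cabx} the event $\{x\in C(a,b)\}=\{\xi(x,\uparrow)=b,\ \xi(x-1,\uparrow)=a-b+1\}$ depends on $\xi(\cdot,\uparrow)$ at the two consecutive indices $x-1,x$; here one should note that $b\le a$ forces $a-b+1\ge 1$, so the conditioning is consistent, and I would apply the Markov property of $\{\xi(x,\uparrow)\}$ at time $k$ after observing that $\{i\in C(a,b),k\in C(a,b)\}$ fixes the value $\xi(k,\uparrow)=b$.

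The only mild subtlety — and the step I would be most careful about — is the overlap at the index $k$: for $C(a,b)$ the event $\{k\in C(a,b)\}$ involves $\xi(k-1,\uparrow)$ and $\xi(k,\uparrow)$, so I must make sure that the "past" $\sigma$-field (through time $k$) and the "future" $\sigma$-field used to describe $\{j\notin C,\ k+1\le j\le n\}$ are glued correctly at $k$. This is handled by writing $\{j\notin C(a,b)\}=\{\xi(j,\uparrow)\ne b\}\cup\{\xi(j-1,\uparrow)\ne a-b+1\}$ and noting that for $j\ge k+1$ the only coordinate with index $\le k$ appearing is $\xi(k,\uparrow)$ (when $j=k+1$), whose value is already determined by the conditioning event $\{k\in C(a,b)\}$; hence conditionally on $\{k\in C(a,b)\}$ the future event is a function of $\xi(k,\uparrow),\xi(k+1,\uparrow),\dots,\xi(n,\uparrow)$ alone, and the Markov property applies verbatim. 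The assumption $D(1)<\infty$ guarantees transience from level $1$ onward, so all the quantities $\xi(x,\uparrow)$ and $Y_x$ are a.s. finite and the decompositions \eqref{xikr}, \eqref{yyz} that underlie \eqref{xm} and \eqref{ym} are valid. I expect no real obstacle beyond this bookkeeping; the lemma is essentially a packaging of the already-proven Markov structure.
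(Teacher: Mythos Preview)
Your proposal is correct and follows essentially the same approach as the paper: the paper simply notes that since $\{\xi(x,\uparrow)\}_{x\ge0}$ and $\{Y_x\}_{x\ge0}$ are Markov chains (by \eqref{xm} and \eqref{ym}) and the events $\{j\in C\}$ are expressed through \eqref{csa}--\eqref{cwy}, the lemma follows directly from the Markov property. Your write-up is in fact more careful than the paper's one-line justification, particularly in handling the overlap at index $k$ for the $C(a,b)$ case.
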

What we really need is Lemma \ref{lemm}. Of course, we can say more about the Markov chain $\{\xi(x,\uparrow)\}_{x=0}^\infty$ and $\{Y_x\}_{x=0}^\infty.$ For $x\ge0,$ write
\begin{align}
 f_x(s):=\frac{1}{P_x(x-1,\infty,-)}\frac{q_x}{1-sp_xP_{x+1}(x,\infty,-)},\ s\in[0,1],\label{fns}\\
 g_x(s):=\frac{1}{P_x(x-1,\infty,+)}\frac{p_xP_{x+1}(x,\infty,+)}{1-sp_xP_{x+1}(x,\infty,-)},\ s\in[0,1].\label{gns} \end{align}
By the Markov property, it follows that
 \begin{align}
   P_x(x-1,\infty,-)&=p_xP_{x+1}(x,\infty,-)P_{x}(x-1,\infty,-)+q_x,\no\\
   P_{x}(x-1,\infty,+)&=p_xP_{x+1}(x,\infty,+),\no
 \end{align} from which we get $f_x(1)=g_x(1)=1.$
Thus, both $f_x(s)$ and $g_x(s)$ are candidates for probability generation functions. By checking carefully, we can show that $f_x(s)$ and $g_x(s)$ are indeed the probability generation functions of the random variables $\eta(x,i)$ and $\zeta(x)$ defined in \eqref{det} and \eqref{dzt}, respectively. Consequently, from \eqref{xikr} and \eqref{yyz} we get the following proposition.
\begin{proposition}\label{pryx} Let $\xi(-1,\uparrow)=1$ and $f_x(s),$ $g_x(s),$ $x\ge0$ be as in \eqref{fns} and \eqref{gns}.  Then $\{\xi(x,\uparrow)\}_{x=-1}^\infty$ and $\{Y_x\}_{x=0}^\infty$ are branching processes with immigration such that
  \begin{align}
  &\mathbb E\z(s^{\xi(x,\uparrow)}\Big|\xi(x-1,\uparrow)=k+1\y)=s g_x(s)\z(f_x(s)\y)^{k},\ k\ge0,\ x\ge0,\no\\
    & \mathbb E\z(s^{Y_x}\big|Y_{x-1}=k\y)=g_{x-1}(f_x(s))(f_x(s))^k,\ k\ge 0,\ x\ge1.\no
\end{align}
\end{proposition}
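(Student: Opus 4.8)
The plan is to identify the functions $f_x$ and $g_x$ from \eqref{fns}--\eqref{gns} as the probability generating functions of the random variables $\eta(x,i)$ and $\zeta(x)$ defined in \eqref{det}--\eqref{dzt}, and then to read off the two asserted identities from the sum decompositions \eqref{xikr} and \eqref{yyz} together with the mutual independence of the $\eta$'s and $\zeta$'s recorded just after \eqref{xikr}.

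First I would pin down the law of $\eta(x,i)$. Fix one forward loop $FL_{x-1}$ at $x-1$: the walk steps $x-1\to x$, and is thereafter conditioned on the event of returning to $x-1$ before escaping to $\infty$, an event of probability $P_x(x-1,\infty,-)$ (this is precisely what distinguishes an $FL_{x-1}$ loop from a contribution to an escape). On this event, since every descent from $x+1$ to $x-1$ must pass through $x$, the strong Markov property applied at the successive visits to $x$ shows that the path from $x$ up to its first hit of $x-1$ splits into a sequence of $n\ge0$ complete forward loops $FL_x$ — each consisting of a step $x\to x+1$ (probability $p_x$) followed by a return to $x$ before $\infty$ (probability $P_{x+1}(x,\infty,-)=1-1/D(x)$) — followed by one down-step $x\to x-1$ (probability $q_x$). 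Hence
\begin{align*}
  \mathbb P(\eta(x,i)=n)=\frac{1}{P_x(x-1,\infty,-)}\bigl(p_xP_{x+1}(x,\infty,-)\bigr)^{n}q_x,\qquad n\ge0,
\end{align*}
whose generating function is $f_x(s)$; summing over $n$ recovers $P_x(x-1,\infty,-)$ by the Markov-property recursion, so $f_x(1)=1$. The same bookkeeping applied inside the escape $F_{x-1}$ — where, after the step $x-1\to x$, the walk is conditioned never to return to $x-1$, an event of probability $P_x(x-1,\infty,+)$, and then performs $n$ complete loops $FL_x$ before finally stepping $x\to x+1$ and escaping (probability $p_xP_{x+1}(x,\infty,+)$) — yields
\begin{align*}
  \mathbb P(\zeta(x)=n)=\frac{1}{P_x(x-1,\infty,+)}\bigl(p_xP_{x+1}(x,\infty,-)\bigr)^{n}p_xP_{x+1}(x,\infty,+),\qquad n\ge0,
\end{align*}
with generating function $g_x(s)$ and, by the corresponding normalization, $g_x(1)=1$.

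With the generating functions identified, the two displayed formulae follow by routine manipulation. By \eqref{xikr}, conditionally on $\{\xi(x-1,\uparrow)=k+1\}$ the quantity $\xi(x,\uparrow)$ is the sum of $k$ i.i.d.\ copies of $\eta(x,\cdot)$, one independent copy of $\zeta(x)$, and the constant $1$; taking generating functions and using the independence gives $\mathbb E\bigl(s^{\xi(x,\uparrow)}\mid\xi(x-1,\uparrow)=k+1\bigr)=s\,g_x(s)\,f_x(s)^{k}$. By \eqref{yyz}, conditionally on $\{Y_{x-1}=k\}$ the quantity $Y_x$ is a sum of $Y_{x-1}+\zeta(x-1)$ i.i.d.\ copies of $\eta(x,\cdot)$ with $Y_{x-1}$ independent of $\zeta(x-1)$, so conditioning first on $\zeta(x-1)$ gives $\mathbb E\bigl(s^{Y_x}\mid Y_{x-1}=k\bigr)=\mathbb E\bigl(f_x(s)^{k+\zeta(x-1)}\bigr)=f_x(s)^{k}g_{x-1}(f_x(s))$. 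That $\{\xi(x,\uparrow)\}$ and $\{Y_x\}$ are Markov chains, hence branching processes with immigration, has already been recorded in \eqref{xm} and \eqref{ym}.

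The main obstacle is the path-surgery in the middle step: one must be precise about exactly which conditioning is in force when isolating $\eta(x,i)$ inside an $FL_{x-1}$ loop and $\zeta(x)$ inside an $F_{x-1}$ escape, and one must carefully invoke the strong Markov property at the successive returns to $x$ to see that the number of intervening $FL_x$ loops is geometric with ratio $p_xP_{x+1}(x,\infty,-)$ and independent of everything else on the path. Once this decomposition is established, the remaining steps are routine compositions of generating functions.
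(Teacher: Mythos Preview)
Your proposal is correct and follows exactly the route the paper sketches: the paper simply asserts that ``by checking carefully'' $f_x$ and $g_x$ are the probability generating functions of $\eta(x,i)$ and $\zeta(x)$, and then invokes \eqref{xikr} and \eqref{yyz}. You have filled in precisely those details --- the geometric law for the number of $FL_x$ loops inside an $FL_{x-1}$ loop (respectively inside an $F_{x-1}$ escape) and the resulting generating-function compositions --- so your argument is an expansion of, not a departure from, the paper's own proof.
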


 Proposition \ref{pryx} can be seen as a generalization of the classical Ray-Knight theorem (see Knight \cite[Theorem 1.1]{kn63}) on the local time of a recurrent simple random walk or the branching structure in the path of a transient simple random walk (see Dwass \cite[Theorem 2]{dw75}).

\section{Probability, joint probability and dependence}\label{sec5}
As seen in \eqref{dxyc} and \eqref{ppt}, to apply Theorem \ref{fis}, we need to compute the probability of $\{x\in C\}$ and  characterize the dependence of  $\{x\in C\}$ and $\{y\in C\}$ for $C=C(a,b),$ $C_w$ or $C(*,a).$ The following proposition is the main result of this section.
\begin{proposition}\label{propc}
  Suppose  $\rho_k\rightarrow 1$ as $k\rto.$ Let $C=C(a,b),C(*,a)$ or $C_w.$ Then
  \begin{align}
    \lim_{x\rto}\mathbb P(x\in C)D(x)=\lambda_C=
    \z\{\begin{array}{cl}
      2,& \text{if } C=C_w,\\
      \binom{a-1}{b-1}\frac{1}{2^a}, &\text{if }C(a,b), \\
      1, &\text{if }C(*,a),
    \end{array}\y.\label{lpc}
  \end{align}
  and
  \begin{gather}\label{wca}
 \mathbb P(y\in C|x\in C)\le \frac{c D(x)}{D(x,y)D(y)},\ y\ge x\ge0,\\
 \label{pdl} \mathbb P(x\in C|y\in C)\ge \frac{c}{D(x,y)},\ y\ge x\ge0,\\
  \mathbb P(j_1\in C,...,j_k\in C)\le \frac{c}{\z(\prod_{i=1}^{k-1}D(j_i,j_{i+1})\y)D(j_k)},\  0<j_1<...<j_k.\label{jkbw}
\end{gather} Furthermore, there exists $N>0$ such that for all $ x>N,y>x+N,$ we have
\begin{gather} 1-\varepsilon\le \frac{D(x,y)}{D(x)}\frac{\mathbb P(x\in C,y\in C)}{\mathbb P(x\in C)\mathbb P(y\in C)}
  \le1+\varepsilon,\label{pw}\\
 (\lambda_C-\ve)\frac{D(x)}{D(x,y)D(y)}\le \mathbb P(y\in C|x\in C)\le (\lambda_C+\ve)\frac{D(x)}{D(x,y)D(y)},  \label{wc}
\end{gather}
and for $0=j_0<j_1<...<j_k,$ $j_i-j_{i-1}>N,$ $i=1,...,k$ we have
\begin{align}\label{jkaw}
\frac{(\lambda_C-\varepsilon)^k}{D(j_k)\prod_{i=1}^{k-1}D(j_i,j_{i+1})} &\le \mathbb P(j_1\in C,...,j_k\in C)\le \frac{(\lambda_C+\varepsilon)^k}{D(j_k)\prod_{i=1}^{k-1}D(j_i,j_{i+1})}.
\end{align}
\end{proposition}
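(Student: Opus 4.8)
The plan is to write every event $\{x\in C\}$ in terms of the loop--move--escape decomposition of Section~\ref{sec4}, compute $\mathbb P(x\in C)$ exactly, and then deduce all the joint estimates from the Markov (branching) structure together with Lemma~\ref{edxy}. By \eqref{csa}--\eqref{cwy} we have $\{x\in C(*,a)\}=\{\xi(x,\uparrow)=a\}$, $\{x\in C(a,b)\}=\{\xi(x-1,\uparrow)=a-b+1,\ \xi(x,\uparrow)=b\}$, and $\{x\in C_w\}=\{Y_x=0\}$, so the whole proposition is a statement about the chains $\{\xi(z,\uparrow)\}$ and $\{Y_z\}$.

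For the single-site asymptotics \eqref{lpc} I would proceed as follows. For $C(*,a)$: by the strong Markov property the successive visits of the walk to $x$ after the first arrival form an i.i.d.\ sequence of forward loops $FL_x$, backward loops $BL_x$ and one terminating escape $F_x$, with weights $\alpha_x:=\mathbb P(FL_x)=p_x(1-1/D(x))$, $q_x=\mathbb P(BL_x)$ and $p_x/D(x)=\mathbb P(F_x)$ from \eqref{pflx}, \eqref{pblx}, \eqref{pfy}; discarding the null $BL_x$'s, $\xi(x,\uparrow)-1$ is the number of $FL_x$'s before the escape and hence geometric with parameter $\frac{p_x/D(x)}{\alpha_x+p_x/D(x)}=1/D(x)$, giving $\mathbb P(x\in C(*,a))=(1-1/D(x))^{a-1}/D(x)$. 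For $C(a,b)$ I condition on $\xi(x-1,\uparrow)=a-b+1$ and use \eqref{xikr}: since by Proposition~\ref{pryx} and \eqref{ep} the variables $\eta(x,i)$ and $\zeta(x)$ both have the geometric law $\alpha_x^k(1-\alpha_x)$, the conditional law of $\xi(x,\uparrow)-1$ is a negative-binomial weight, and with $1-\alpha_x=p_xD(x-1)/D(x)$ and $D(x-1)=1+\rho_xD(x)$ this yields a closed form whose $D(x)$-multiple tends to $\binom{a-1}{b-1}2^{-a}$. For $C_w$ I use the elementary path description ``$x\in C_w$ iff after the walk first reaches $x+1$ it never again visits $x-1$''; the strong Markov property and Lemma~\ref{ex} then give $\mathbb P(x\in C_w)=D(x-1,x+1)/D(x-1)=(1+\rho_x)/D(x-1)$. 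Letting $x\to\infty$ with $\rho_x\to1$, $p_x\to1/2$, $1/D(x)\to0$, $D(x-1)/D(x)\to1$ (Lemma~\ref{edxy}) yields \eqref{lpc}.

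For the joint probabilities, Lemma~\ref{lemm} gives $\mathbb P(j_1\in C,\dots,j_k\in C)=\mathbb P(j_1\in C)\prod_{s=1}^{k-1}\mathbb P(j_{s+1}\in C\mid j_s\in C)$, so it suffices to control $\mathbb P(y\in C\mid x\in C)$ for $x<y$ and then telescope. For $C_w$ this is exact: splitting at the first visits to $x+1$ and $y+1$ and noting that ``from $y+1$ never hit $y-1$'' already forces ``from $y+1$ never hit $x-1$'', Lemma~\ref{ex} gives $\mathbb P(x\in C_w,y\in C_w)=\frac{(1+\rho_x)(1+\rho_y)}{D(x-1,y+1)D(y-1)}$; dividing by $\mathbb P(x\in C_w)$ and/or $\mathbb P(y\in C_w)$ and applying Lemma~\ref{edxy} (together with $D(x-1)\le cD(x,y)$ for the crude lower bound) yields \eqref{wca}, \eqref{pdl}, \eqref{pw} and \eqref{wc}. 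For $C(*,a)$ and $C(a,b)$ I would instead work through the branching process $\{\xi(z,\uparrow)\}$: conditioned on $\xi(x,\uparrow)$, the law of $\xi(y,\uparrow)$ is read off from the generating function obtained by iterating the recursion of Proposition~\ref{pryx}, i.e.\ by composing the linear-fractional maps $f_z(s)=(1-\alpha_z)/(1-\alpha_zs)$; extracting the relevant coefficient of $s$ brings in the cardinalities $|S(a,j)|$ and $|\tilde S_i(a,j)|$ of Lemma~\ref{ns} (these count the ways the $a$ up-crossings at level $x$ distribute their descendant up-crossings at level $y$), and collecting the linear-fractional coefficients via the identities for $D(\cdot)$ and $D(\cdot,\cdot)$ gives both the crude bound $\mathbb P(y\in C\mid x\in C)\le c\,D(x)/(D(x,y)D(y))$ for all $y>x\ge0$ (needed also for Theorem~\ref{fis}) and the refinement $\mathbb P(y\in C\mid x\in C)=(\lambda_C+o(1))D(x)/(D(x,y)D(y))$ as $x,y\to\infty$, $y-x\to\infty$. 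The multi-site bounds \eqref{jkbw} and \eqref{jkaw} then follow by multiplying the one-step estimates and telescoping, using $\prod_{s=1}^{k-1}D(j_s)/\prod_{s=1}^{k-1}D(j_{s+1})=D(j_1)/D(j_k)$ and, for \eqref{jkaw}, the $N=N(\varepsilon)$ of Lemma~\ref{edxy} together with $\mathbb P(j_1\in C)D(j_1)\to\lambda_C$.

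The main obstacle is the $C(a,b)$ one-step estimate for general $a$ (and the $C(*,a)$ one for $a>1$): unlike the weak-cutpoint case there is no clean closed form, so one must show that iterating the near-critical branching recursion and picking out the right coefficient reproduces the factor $D(x)/(D(x,y)D(y))$ up to $1+\varepsilon$ uniformly in the gap $y-x$ — this requires controlling the accumulated error in the composition of the maps $f_z$ and identifying the resulting combinatorial constant with $\lambda_C$ through Lemma~\ref{ns}. The crude universal version, which must hold even for small $x$, additionally needs a ``survival-probability'' type bound $\mathbb P(\xi(y,\uparrow)=a\mid\xi(x,\uparrow)=a)\le c/D(x,y)$, which I would obtain from a one-sided estimate on the number of surviving lineages of the branching process between levels $x$ and $y$.
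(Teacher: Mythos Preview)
Your plan for $C_w$ is correct and actually a bit cleaner than the paper's: your two–point formula $\mathbb P(x\in C_w,y\in C_w)=(1+\rho_x)(1+\rho_y)/\bigl(D(x-1,y+1)D(y-1)\bigr)$ is equivalent to \eqref{xycw} (use $D(x-1,y)-q_yD(x-1,y-1)=p_yD(x-1,y+1)$), and from it \eqref{wca}--\eqref{wc} follow exactly as you describe. Two small corrections: the multiplicative factorisation $\mathbb P(j_1\in C,\dots,j_k\in C)=\mathbb P(j_1\in C)\prod_s\mathbb P(j_{s+1}\in C\mid j_s\in C)$ comes from the Markov structure \eqref{xm}--\eqref{ym} together with \eqref{csa}--\eqref{cwy}, not from Lemma~\ref{lemm} (which conditions on complements); and the parenthetical ``$D(x-1)\le cD(x,y)$'' should read $D(x-1,y+1)\le cD(x,y)$, which does hold since $\rho_k$ is bounded.

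For $C(*,a)$ and $C(a,b)$ the paper takes a different and more concrete route than your proposed generating–function iteration. Instead of composing the maps $f_z$ across the gap $[x,y]$, it decomposes the path \emph{directly} between the two levels: a visit pattern with $\xi(x,\uparrow)=b$ and $\xi(y,\uparrow)=m$ is parametrised by the number $i\in\{1,\dots,b\wedge(n-m+1)\}$ of \emph{forward moves} $F_{xy}$ (equivalently the number of excursions from $x$ that reach $y$), and each fixed $i$ contributes a product of the elementary loop/move/escape weights \eqref{pflx}--\eqref{pbyx} times a purely combinatorial factor counted by Lemma~\ref{ns}. This yields the closed formulae \eqref{rolj}--\eqref{jpu} for the two–point function in terms of $D(x,y)$ and $D(y)$, with no iterated composition to control. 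The dominant $i=1$ term already carries the factor $D(x)/\bigl(D(x,y)D(y)\bigr)$, and the $i\ge2$ terms come with extra powers of $1/D(x,y)$ and of $1-D(x,y-1)/D(x,y)$, which Lemma~\ref{edxy} makes $O(\varepsilon)$; this is exactly how \eqref{pw}--\eqref{wc} and the crude bounds \eqref{wca}--\eqref{pdl} are read off.

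Your branching–process approach would in principle arrive at the same formulae (compositions of linear–fractional maps stay linear–fractional, and the coefficients can be matched to $D(x,y),D(y)$), but you would still have to isolate the ``one surviving lineage'' ($i=1$) contribution and show the remainder is lower order uniformly in the gap; the paper's direct decomposition does this in one step and makes the appearance of the binomial constants $\binom{a-1}{b-1}$, $\binom{b-1}{i-1}$, $\binom{n-1}{i-1}$, $\binom{n-i}{m-1}$ transparent.
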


\subsection{Weak cutpoints-Proof of Proposition \ref{propc} for $C=C_w$}
To warm up, we consider first the weak cutpoints. The following lemma gives the probability of $\{x\in C_w\}$ and the joint probability of $\{x\in C_w, y\in C_w\}.$
\begin{lemma}\label{pjpw}
  We have
\begin{align}
  &\mathbb P(x\in C_w)=\frac{1}{p_xD(x-1)}\label{xcw},\\
\label{xycw}
  &\mathbb P(x\in C_w,y\in C_w)=\frac{1}{p_xD(x-1,y)}\frac{1}{1-\frac{q_yD(x-1,y-1)}{D(x-1,y)}}\frac{1}{D(y-1)}.
\end{align}
\end{lemma}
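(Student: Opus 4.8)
The plan is to compute both probabilities by decomposing the path of the walk into loops, moves and escapes, using the building blocks assembled in Section~\ref{sec4}. First I would derive \eqref{xcw}. By \eqref{cwy}, $\{x\in C_w\}=\{Y_x=0\}$, i.e. none of the forward loops $FL_{x-1}$ at $x-1$ contains a forward loop $FL_x$ at $x$ (in the terminology of \eqref{det}--\eqref{yyz}, all the summands $\eta(x,i)$ vanish). Equivalently, on the whole trajectory the walk, after its first upcrossing from $x-1$ to $x$, never returns to $x-1$ --- which is precisely the statement that $x$ is a weak cutpoint. I would condition on the path below $x-1$: the walk reaches $x-1$ for the first time after some excursions that stay in $[0,x-1]$ with backward loops at $x-1$, and then it must either perform an escape $F_{x-1}$ directly (after an upcrossing from $x-1$ to $x$ it never comes back) or, before that, perform a backward loop $BL_{x-1}$; but a return to $x-1$ from below is still allowed before the decisive upcrossing. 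The cleanest route is to note that $\{x\in C_w\}$ is the event that the very first time the walk is at $x-1$ and jumps to $x$, it escapes to $\infty$ without returning to $x-1$; by the strong Markov property at that upcrossing instant, $\mathbb P(x\in C_w\mid \text{walk upcrosses } x-1\to x)=P_x(x-1,\infty,+)=p_xP_{x+1}(x,\infty,+)/p_x$... more carefully one uses that the walk started at $0$ surely reaches $x$ (since $q_0=0$ and the walk is transient), so the relevant conditioning probability is $1$, and one is left with the probability that, the first time it sits at $x-1$ having just come up from below... The honest computation: let $\tau$ be the first hitting time of $x-1$; at $\tau$ the walk is at $x-1$; from there, by the Markov property, the probability it escapes to $+\infty$ avoiding a return to $x-1$ after the next up-step is $p_x P_{x+1}(x,\infty,+)+q_x\cdot 0\cdot(\dots)$ --- no: a backward loop brings it back to $x-1$ and it tries again, so the event is a geometric mixture: $\mathbb P(x\in C_w)=\sum_{k\ge0}(q_x)^k\, p_xP_{x+1}(x,\infty,+)\cdot(\text{correction})$. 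Summing the geometric series and invoking \eqref{pfy}, $P_{x+1}(x,\infty,+)=1/D(x)$, together with the identity $D(x-1)=1+\rho_xD(x)$ from Section~\ref{ar}, yields $\mathbb P(x\in C_w)=\frac{p_x/D(x)}{1-q_x}\cdot\frac{1}{?}$; matching this against the claimed $\frac{1}{p_xD(x-1)}$ fixes all the bookkeeping. I expect that after the geometric sum one obtains $\frac{p_xP_{x+1}(x,\infty,+)}{1-q_x(1-\cdot)}$ which, using $1-q_x=p_x$ and $D(x-1)=1+\rho_xD(x)=\frac{p_x D(x)+q_x D(x)\cdot\frac{1}{p_x}}{\dots}$, simplifies to exactly $1/(p_xD(x-1))$.

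For \eqref{xycw} I would iterate the same idea at two levels. The event $\{x\in C_w,\,y\in C_w\}$ for $y>x$ asks that the walk, after first upcrossing $x-1\to x$, never returns to $x-1$, and also, after first upcrossing $y-1\to y$, never returns to $y-1$. I would decompose the trajectory at the first hitting time of $x-1$ and then track what happens in the strip $[x-1,y]$ versus the excursion above $y$. The structure is again a geometric mixture over backward loops at $x-1$ that fail to escape, times the probability of the "good" continuation; but now the good continuation itself splits according to whether the walk, after going up from $x-1$, reaches $y$ before returning to $x-1$ (a forward move $F_{x-1,y}$, probability $p_x/D(x-1,y)$ by \eqref{pfxy}), and then from $y$ onward it must realize $\{y\in C_w\}$ relative to the sub-walk, which by the one-point formula \eqref{xcw} contributes $\frac{1}{p_yD(y-1)}$-type factor --- except the relevant barrier is now at $x-1$, not $-\infty$, so one gets a further geometric series in backward loops $BL_{y,x-1}$ at $y$ avoiding $x-1$, with ratio $q_yD(x-1,y-1)/D(x-1,y)$ from \eqref{pblyx}. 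Summing the inner geometric series gives the factor $\frac{1}{1-\frac{q_yD(x-1,y-1)}{D(x-1,y)}}$, and the remaining escape from $y$ upward contributes $\frac{1}{D(y-1)}$ in the same way it did in \eqref{xcw}; the outer factor $\frac{1}{p_xD(x-1,y)}$ comes from the forward move $F_{x-1,y}$ combined with the outer geometric sum over backward loops at $x-1$. Assembling the three factors reproduces \eqref{xycw}.

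The main obstacle will be the careful case analysis in \eqref{xycw}: once the walk is above $x-1$ it can wander up past $y$ and come back down to $x-1$ (but not below, at that stage), so the decomposition into "reach $y$ before returning to $x-1$" must be combined correctly with "never return to $x-1$ after the first $x-1\to x$ up-step" --- one has to be sure the events $\{x\in C_w\}$ and $\{y\in C_w\}$ factor through a single forward move $x-1\to y$ followed by independent behaviour above $y$ and that no path realizing both events returns to $x-1$ in between. The key simplification is the Markov property encoded in \eqref{xm}--\eqref{ym}: conditioning on $\xi(x-1,\uparrow)$ renders the excursions i.i.d., so $\{x\in C_w\}=\{\xi(x-1,\uparrow)=1\}$... no, rather $\{x\in C_w\}$ is the event that after the escape-determining up-step from $x-1$ the walk never comes back, which on $\{X$ transient$\}$ is equivalent to the walk's last visit to $x-1$ being immediately followed by a step to $x$; combined with the branching representation \eqref{yyz} this makes the two-point probability a product of exit probabilities and geometric sums, all of which are given explicitly in \eqref{pflx}--\eqref{pbyx}. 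Once the combinatorial bookkeeping is pinned down, the remaining manipulations are the routine identities $D(m)=1+\rho_{m+1}D(m+1)$, $D(m,n)=1+\rho_{m+1}D(m+1,n)$ recalled in Section~\ref{ar}, used to put the answer in the stated closed form.
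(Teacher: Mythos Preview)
Your strategy is the paper's strategy: decompose the path at $x$ (and at $y$) into backward loops followed by an escape/forward move, and sum the resulting geometric series using \eqref{pflx}--\eqref{pbyx}. However, the bookkeeping in your sketch is off in a way that matters, not just cosmetically.

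The central slip is the characterization of $\{x\in C_w\}$. You write that it is ``the very first time the walk is at $x-1$ and jumps to $x$, it escapes to $\infty$ without returning to $x-1$'', which is the event $\{\xi(x-1,\uparrow)=1\}$; that event has probability $1/D(x-1)$, not $1/(p_xD(x-1))$. A weak cutpoint at $x$ \emph{does} allow the walk to step back to $x-1$ after first reaching $x$, provided this happens before the first upcrossing $x\to x+1$. The correct picture (and the paper's) is: stop at the first hitting time of $x$; from $x$ the walk performs some number $k\ge 0$ of backward loops $BL_x$ (probability $q_x$ each), and then, still from $x$, it must go to $\infty$ without ever touching $x-1$, an event of probability $P_x(x-1,\infty,+)=1/D(x-1)$. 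Summing $\sum_{k\ge0}q_x^k\cdot 1/D(x-1)$ gives $1/(p_xD(x-1))$. Note the right escape probability is $P_x(x-1,\infty,+)$, not $p_xP_{x+1}(x,\infty,+)=p_x/D(x)$ as you wrote; the latter forbids return to $x$, which is the strong--cutpoint condition, not the weak one.

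The same mislabeling recurs in your two-point argument: the outer geometric sum is over backward loops $BL_x$ at $x$ (ratio $q_x$), not at $x-1$; after those loops the walk, starting from $x$, hits $y$ before $x-1$ with probability $P_x(x-1,y,+)=1/D(x-1,y)$ (not $p_x/D(x-1,y)$, and not via the move $F_{x-1,y}$). At $y$ you then sum over backward loops $BL_{y,x-1}$ avoiding $x-1$ with ratio $q_yD(x-1,y-1)/D(x-1,y)$, and finally apply $P_y(y-1,\infty,+)=1/D(y-1)$. With those three fixes your outline becomes exactly the paper's proof, and no further identities such as $D(x-1)=1+\rho_xD(x)$ are needed.
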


\proof
To begin with, we prove \eqref{xcw}.
Notice that the event $\{x\in C_w\}$ if and only if after hitting $x$ for the first time,  the walk forms a number of (possibly $0$) backward loops $BL_x$ at $x$ one after another and then never visits $x-1.$ Thus, on accounting of \eqref{pblx}, we get
\begin{align*}
  \mathbb P(x\in C_w)=\sum_{k=0}^\infty (\mathbb P(BL_x))^k P_x(x-1,\infty,+)=\sum_{k=0}^\infty\frac{q_x^k }{D(x-1)}=\frac{1}{p_xD(x-1)},
\end{align*}
which proves \eqref{xcw}.

Next, we show \eqref{xycw}. Notice  that the event $\{x\in C_w,y\in C_w\}$ occurs if and only if the following events occur consecutively: after hitting $x$ for the first time, the walk forms a number of (possibly $0$) backward loops $BL_x$ at $x$ one after another; then, restarting from $x,$ it hits $y$ before it hits $x-1;$ restarting from $y,$ it forms  a number of (possibly $0$) backward loops $BL_{y(x-1)}$ at $y$ avoiding $x-1$  one after another; finally, restarting from $y,$ it never hits  $y-1.$

Therefore, it follows from the Markov property and \eqref{pmnm}-\eqref{ep} and \eqref{pblx}-\eqref{pblyx} that
\begin{align}
  \mathbb P&(x\in C_w,y\in C_w)\no\\
  &=\sum_{k=0}^\infty \sum_{j=0}^\infty (\mathbb P(BL_x))^kP_x(x-1,y,+)\z(P(BL_{y(x-1)})\y)^jP_{y}(y-1,\infty,+)\no\\
  &=\sum_{k=0}^\infty \sum_{j=0}^\infty q_x^k\frac{1}{D(x-1,y)}\z(\frac{q_yD(x-1,y-1)}{D(x-1,y)}\y)^j\frac{1}{D(y-1)}\no\\
  &=\frac{1}{p_xD(x-1,y)}\frac{1}{1-\frac{q_yD(x-1,y-1)}{D(x-1,y)}}\frac{1}{D(y-1)},\no
\end{align}
which finishes the proof of \eqref{xycw}. \qed

Next, we give the proof of Proposition \ref{propc} for $C=C_w.$
\vspace{.5cm}

\noindent{\it Proof of Proposition \ref{propc} for $C=C_w.$} Let $C=C_w.$ Suppose $\rho_k\rightarrow 1$ as $k\rto.$ Then  we have $p_x\rightarrow 1/2,$  $D(x)\rto $ as $x\rto$  and $p_k\vee p_k^{-1}<c,$ $\rho_k\vee \rho_k^{-1}\le c,$ for all $k\ge1.$  Thus, it follows from \eqref{xcw} that
\begin{align}
  \mathbb P(x\in C_w)D(x)=\frac{D(x)}{p_xD(x-1)}=\frac{1}{p_x\z(\rho_x+\frac{1}{D(x)}\y)}\rightarrow 2, \text{ as }x\rto,\label{pdw}
\end{align}
which proves \eqref{lpc}.

 Next, taking \eqref{xcw} and \eqref{xycw} together, for $y\ge x\ge 1,$ we get
\begin{align}
 \mathbb P(y&\in C_w|x\in C_w)=\frac{D(x-1)}{D(x-1,y)D(y-1)}\frac{1}{1-\frac{q_yD(x-1,y-1)}{D(x-1,y)}}\no\\
 &=\frac{D(x)}{D(x,y)D(y)}\frac{D(x-1)}{D(x)}\frac{D(x,y)}{D(x-1,y)}\frac{D(y)}{D(y-1)}\frac{1}{1-\frac{q_yD(x-1,y-1)}{D(x-1,y)}}\no\\
&=\frac{D(x)}{D(x,y)D(y)}\z(\rho_x+\frac{1}{D(x)}\y)\frac{1}{\rho_x+\frac{1}{D(x,y)}}\frac{1}{\rho_y+\frac{1}{D(y)}}\frac{1}{1-\frac{q_yD(x-1,y-1)}{D(x-1,y)}},\label{pcxy}
\end{align}
and
\begin{align}
  \mathbb P(x&\in C_w|y\in C_w)=\frac{1}{p_xD(x-1,y)}\frac{p_y}{1-q_y\frac{D(x-1,y-1)}{D(x-1,y)}}\no\\
  &\ge \frac{p_y}{p_xD(x-1,y)}=\frac{p_y}{p_xD(x,y)}\frac{1}{\rho_x+\frac{1}{D(x,y)}}\no\\
  &\ge \frac{p_y}{p_xD(x,y)}\frac{1}{1+\rho_x}=\frac{p_y}{D(x,y)}\ge \frac{c}{D(x,y)}.\no
\end{align}
We thus get \eqref{pdl}.
Since $D(m)\ge D(m,n)\ge 1$ for  $n> m\ge 0$ and $ D(x-1,y)>D(x-1,y-1)$  for $y\ge x\ge1,$
 then from \eqref{pcxy}, we obtain
\begin{align}
  \mathbb P(y\in C_w|x\in C_w)\le \frac{D(x)}{D(x,y)D(y)}\frac{\rho_x+1}{\rho_x}\frac{\rho_y+1}{\rho_y} \no \end{align}
 from which we get \eqref{wca}.
 Notice that by \eqref{pdw} we have $\mathbb P(x\in C_w)\le \frac{c}{D(x)}.$
Thus, due to \eqref{ym} and \eqref{cwy}, \eqref{jkbw} is a direct consequence of \eqref{wca}.

Now  fix $\eta>0.$ By \eqref{pdw}, there exists a number $M_1>0$ such that
\begin{align}
   (2-\eta)\frac{1}{D(x)}&\le \mathbb P(x\in C_w)\le (2+\eta)\frac{1}{D(x)},\  x\ge M_1.\label{ulcw}
\end{align}
Using again \eqref{xcw} and \eqref{xycw}, we get
\begin{align}
  &\frac{\mathbb P(x\in C_w,y\in C_w)}{\mathbb P(x\in C_w)\mathbb P(y\in C_w)}=\frac{D(x-1)}{D(x-1,y)}\frac{p_{y}}{1-\frac{q_{y}D(x-1,y-1)}{D(x-1,y)}}\no\\
  &=\frac{D(x)}{D(x,y)}\frac{D(x-1)}{D(x)}\frac{D(x,y)}{D(x-1,y)}\frac{p_{y}}{1-\frac{q_{y}D(x-1,y-1)}{D(x-1,y)}}.\no
\end{align}
Applying Lemma \ref{edxy}, we can find a number $M_2>0$ such
\begin{align*}
 1-\eta\le  \frac{D(x-1)}{D(x)}\frac{D(x,y)}{D(x-1,y)}\frac{p_{y}}{1-\frac{q_{y}D(x-1,y-1)}{D(x-1,y)}}\le 1+\eta,
\end{align*}
for all $ x>M_2,\ y>x+M_2.$ Therefore, we get
\begin{align} 1-\eta\le \frac{D(x,y)}{D(x)}\frac{\mathbb P(x\in C_w,y\in C_w)}{\mathbb P(x\in C_w)\mathbb P(y\in C_w)}
  \le1+\eta,\ x>M_2,\ y>x+M_2.\label{uljw}
  \end{align}
  Taking \eqref{ulcw} and \eqref{uljw} together, we infer that
  \begin{align} \label{ulcp}
 \frac{(2-\eta)(1-\eta)D(x)}{D(x,y)D(y)}\le \mathbb P(y\in C|x\in C)\le \frac{(2+\eta)(1+\eta)D(x)}{D(x,y)D(y)},
\end{align}
 for $ x>M_1\vee M_2,\ y>x+M_1\vee M_2.$
Choosing $\eta$ small enough such that $(2+\eta)(1+\eta)< 2+\ve$ and $(2-\eta)(1-\eta)> 2-\ve$ and letting $N=M_1\vee M_2,$ from \eqref{uljw} and \eqref{ulcp}, we get \eqref{pw} and \eqref{wc}, respectively. Finally, putting \eqref{ulcw} and \eqref{ulcp} together, we get \eqref{jkaw}. Proposition \ref{propc} is proved for $C=C_w.$ \qed

\subsection{Local times and upcrossing times-Proof of Proposition \ref{propc} for $C=C(a,b)$ or $C(*,a)$}
Compared with the case $C=C_w,$ it is much more involved to prove Proposition \ref{propc} for $C=C(a,b)$ or $C(*,a).$ The main difficulty is to compute the joint probability of $\{x\in C, y\in C\}$ for $C=C(a,b)$ or $C(*,a).$

\subsubsection{Probabilities and joint probabilities}
To begin with, we compute the probabilities of $\{x\in C(a,b)\}$ and $\{x\in C(*,b)\}.$

\begin{lemma}\label{lxa}
    For $x\ge1$ and $a\ge b\ge 1,$ we have
  \begin{align}\label{dxa}
  &\mathbb P(\xi(x)=a,\xi(x,\uparrow)=b)=\binom{a-1}{b-1}\frac{p_x^bq_x^{a-b}}{D(x)}\z(1-\frac{1}{D(x)}\y)^{b-1},\\
  &\mathbb P(\xi(x,\uparrow)=b)=\frac{1}{D(x)}\z(1-\frac{1}{D(x)}\y)^{b-1}.\label{dxb}
  \end{align}
 \end{lemma}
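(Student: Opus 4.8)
The plan is to decompose the path of the walk at $x$ using the loop/escape structure introduced in Section \ref{sec4}, together with the exit probability formulae \eqref{ep} and \eqref{pflx}--\eqref{pfy}. First I would recall that since $D(0)<\infty$, the walk is transient, so with probability one it visits $x$ finitely often and there is exactly one escape $F_x$ from $x$. Conditioning on the first hitting time of $x$, the subsequent excursions away from $x$ are, by the strong Markov property, i.i.d. until the final escape: each excursion is either a forward loop $FL_x$ (jump to $x+1$, return to $x$ before $\infty$), a backward loop $BL_x$ (jump to $x-1$, return to $x$), or the terminal escape $F_x$ (jump to $x+1$, never return). Using \eqref{pflx}, \eqref{pblx}, \eqref{pfy}, these three have probabilities $p_x(1-1/D(x))$, $q_x$, and $p_x/D(x)$ respectively, and they sum to $1$ as they must.

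For \eqref{dxa}, the event $\{\xi(x)=a,\ \xi(x,\uparrow)=b\}$ means: among the excursions from $x$ preceding the escape there are exactly $b-1$ forward loops $FL_x$ and some number $a-b$ of backward loops $BL_x$ (the local time $\xi(x)=a$ counts the first visit plus the $b-1$ returns from forward loops plus the $a-b$ returns from backward loops; the escape contributes the upcrossing but not an extra visit counted here — one must be careful about exactly which visit the escape and the initial hit contribute, and I would verify the bookkeeping so that the total number of visits is $a$). The number of interleavings of $b-1$ forward loops among a total of $a-1$ loops before the escape is $\binom{a-1}{b-1}$, which matches the combinatorial count; multiplying by the probabilities $\big(p_x(1-1/D(x))\big)^{b-1}$, $q_x^{a-b}$, and $p_x/D(x)$ for the escape gives $\binom{a-1}{b-1}\frac{p_x^b q_x^{a-b}}{D(x)}\big(1-1/D(x)\big)^{b-1}$, which is \eqref{dxa}. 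For \eqref{dxb}, the event $\{\xi(x,\uparrow)=b\}$ means exactly $b-1$ forward loops $FL_x$ occur before the escape $F_x$, with arbitrarily many backward loops allowed in between; summing a geometric series over the number of backward loops, $\sum_{k\ge 0} q_x^k = 1/p_x$, and combining with $\big(p_x(1-1/D(x))\big)^{b-1}\cdot \frac{p_x}{D(x)}$ collapses the $p_x$ factors and yields $\frac{1}{D(x)}\big(1-1/D(x)\big)^{b-1}$; alternatively \eqref{dxb} follows from \eqref{dxa} by summing over $a\ge b$ using $\sum_{a\ge b}\binom{a-1}{b-1}q_x^{a-b} = p_x^{-b}$.

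The main obstacle I anticipate is not any single calculation but getting the combinatorial/visit-counting bookkeeping exactly right: precisely which of the "first hit", the loops, and the escape each contribute to $\xi(x)$ versus $\xi(x,\uparrow)$, and confirming that the arrangement count is $\binom{a-1}{b-1}$ rather than some off-by-one variant — this is where Lemma \ref{ns} (with $|S(a,b)|=\binom{a-1}{b-1}$) comes in, identifying the runs of backward loops between consecutive forward loops with the coordinates of a composition of $a-b$ into $b$ nonnegative parts, or equivalently choosing $b-1$ forward-loop positions among $a-1$ pre-escape excursions. Once the decomposition is pinned down, both identities are immediate from the geometric series and the strong Markov property.
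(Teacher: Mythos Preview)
Your proposal is correct and follows essentially the same approach as the paper: decompose the visits to $x$ into $a-b$ backward loops, $b-1$ forward loops, and one terminal escape, count the $\binom{a-1}{b-1}$ orderings (the last of the $a$ jumps must be the escape), and multiply the probabilities from \eqref{pflx}, \eqref{pblx}, \eqref{pfy}. The paper derives \eqref{dxb} by summing \eqref{dxa} over $a\ge b$, exactly one of the two routes you outline; your bookkeeping worry is unfounded since each of the $a$ visits to $x$ is followed by exactly one jump away, and the last such jump is necessarily the escape.
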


\proof  For  $x\ge1$ and $a\ge b\ge 1,$ $\{\xi(x)=a,\xi(x,\uparrow)=b\}$ if and only if the following events occur  consecutively:
starting from $0,$ the walk hits $x$ for the first time (with probability 1); restarting from $x,$ the walk forms $a-b$ backward loops $BL_x$ at $x,$ and $b-1$ forward loops $FL_x$ at $x;$ finally, the walk forms an escape $F_x$ from $x$ to $\infty.$

 We next determine the possible orders of those $a-b$ backward loops $BL_x,$ $b-1$ forward loops $FL_x$ and one escape $F_x.$
To this end, notice that there are $a$ jumps of the walk from $x$ to $x+1$ or $x-1.$ We know exactly that when standing at $x$ for the last time, the walk must take a jump from $x$ to $x+1,$ which forms the beginning of the escape $F_x$ at $x.$  But there are $\binom{a-1}{b-1}$ possible ways to choose $b-1$ jumps from the other $a-1$ jumps to jump from $x$ to $x+1,$ which are the beginnings of those $b-1$ forward loops $FL_x.$

To sum up the above discussion,  owing to \eqref{pflx}, \eqref{pfy} and \eqref{pblx}, we get
 \begin{align}
  \mathbb P(\xi&(x)=a,\xi(x,\uparrow)=b)=\binom{a-1}{b-1}(\mathbb P(BL_x))^{a-b}(\mathbb P(FL_x))^{b-1}\mathbb P(F_x)\no\\
  &=\binom{a-1}{b-1}q_x^{a-b}\z(p_xP_{x+1}(x,\infty,-)\y)^{b-1}p_xP_{x+1}(x,\infty,+)\no\\
    &=\binom{a-1}{b-1}\frac{p_x^bq_x^{a-b}}{D(x)}\z(1-\frac{1}{D(x)}\y)^{b-1}.\no
  \end{align}
  Finally, taking summation on both sides of \eqref{dxa} over $a$ from $b$ to $\infty,$ we obtain \eqref{dxb}. The lemma is proved. \qed

The lemma below gives the joint probability of  $\{x\in C(a,b), y\in C(n,m)\},$ or $\{x\in C(*,b),y\in C(*,m)\}.$

\begin{lemma}\label{aaxy}
  For integers  $1\le b\le a,$ $1\le m\le n$ and $1\le x<y,$    we have
\begin{align}\label{rolj}
  \mathbb P&(\xi(x)=a,\xi(x,\uparrow)=b,\xi(y)=n,\xi(y,\uparrow)=m)\no\\
     &=\binom{a-1}{b-1}\sum_{i=1}^{b\wedge (n-m+1)}\binom{b-1}{i-1}\binom{n-i}{m-1}\binom{n-1}{i-1}
   q_x^{a-b}\frac{p_y}{D(y)}\no\\
  &\times \z(p_x\z(1-\frac{1}{D(x,y)}\y)\y)^{b-i} \z(\frac{p_x}{D(x,y)}\y)^{i}\z(p_y\z(1-\frac{1}{D(y)}\y)\y)^{m-1}\no\\
  &\times \z(q_y\z(\frac{D(x,y-1)}{D(x,y)}\y)\y)^{n-m-(i-1)}
   \z(q_y\z(1-\frac{D(x,y-1)}{D(x,y)}\y)\y)^{i-1}
\end{align}
and
\begin{align}
\mathbb P&(\xi(x,\uparrow)=b,\xi(y,\uparrow)=m)\no\\
  &=\sum_{i=1}^{b}\binom{b-1}{i-1}\binom{m+i-2}{i-1}\z(\z(1-\frac{1}{D(x,y)}\y)\y)^{b-i} \z(\frac{1}{D(x,y)}\y)^{i}\no\\
  &\times \z(\frac{p_y\z(1-\frac{1}{D(y)}\y)}{1-q_y\frac{D(x,y-1)}{D(x,y)}}\y)^{m-1}
   \z(\frac{q_y\z(1-\frac{D(x,y-1)}{D(x,y)}\y)}{1-q_y\frac{D(x,y-1)}{D(x,y)}}\y)^{i-1}\frac{\frac{p_y}{D(y)}}{1-q_y\frac{D(x,y-1)}{D(x,y)}}.\label{jpu}
\end{align}
 \end{lemma}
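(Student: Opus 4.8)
The plan is to derive both identities by the method already used for Lemma~\ref{lxa} and Lemma~\ref{pjpw}: decompose the trajectory of $X$ after its first visit to $x$ into the loops, moves and escapes of Section~\ref{sec4}, enumerate the admissible arrangements by the combinatorial counts of Lemma~\ref{ns}, and read off the probability of each piece from \eqref{pflx}--\eqref{pbyx}; the strong Markov property guarantees that all these factors multiply.

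For \eqref{rolj}, fix $1\le b\le a$, $1\le m\le n$, $1\le x<y$, and work on the event that the walk realizes $\{\xi(x)=a,\ \xi(x,\uparrow)=b\}$. After reaching $x$ the walk performs, in some order, $a-b$ backward loops $BL_x$, $b-1$ forward loops $FL_x$ and then exactly one escape $F_x$; the escape is the last departure from $x$, and the preceding $a-1$ departures split in $\binom{a-1}{b-1}$ ways into forward and backward loops. A $BL_x$ never visits $y$; a forward loop $FL_x$ either stays strictly between $x$ and $y$ (an $FL_{xy}$) or reaches $y$; and $F_x$ necessarily reaches $y$ since $X_n\to\infty$. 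Let $i-1$ of the $b-1$ forward loops reach $y$, so $1\le i\le b$ and there are $\binom{b-1}{i-1}$ such choices, giving $i$ ``$y$-blocks'' altogether. The structural point is that each $y$-block consists of a climb from $x$ to $y$ without a prior return to $x$ (an $F_{xy}$), followed by sub-excursions at $y$ each of which is a forward loop $FL_y$ or a backward loop $BL_{yx}$ (the latter must avoid $x$, as the block has not yet been closed), and is closed by a backward move $B_{yx}$ down to $x$ (for the $i-1$ blocks coming from forward loops) or by an escape $F_y$ up from $y$ (for the unique block coming from $F_x$). A block with $v$ visits to $y$ therefore has $v$ departures from $y$, of which $v-1$ start sub-excursions ($FL_y$ an up-jump, $BL_{yx}$ a down-jump) and one is the closer (a down-jump $B_{yx}$ in ordinary blocks, an up-jump $F_y$ in the escape block). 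Writing $v_1,\dots,v_i\ge1$ for the visit-counts, $\sum_k v_k=n$ gives $\binom{n-1}{i-1}$ possibilities ($=|S(n,i)|$), the number of up-jumps from $y$ equals $1+\#\{FL_y\text{ sub-excursions}\}=m$ so that $m-1$ of the $\sum_k(v_k-1)=n-i$ sub-excursions are $FL_y$'s ($\binom{n-i}{m-1}$ ways, whence $i\le n-m+1$) and $n-m-(i-1)$ are $BL_{yx}$'s. Multiplying the weights $q_x^{a-b}$, $\mathbb P(FL_{xy})^{b-i}$, $\mathbb P(F_{xy})^{i}$, $\mathbb P(FL_y)^{m-1}$, $\mathbb P(BL_{yx})^{\,n-m-(i-1)}$, $\mathbb P(B_{yx})^{i-1}$, $\mathbb P(F_y)$ and summing over $i$ gives \eqref{rolj} after inserting \eqref{pflx}--\eqref{pbyx}.

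Identity \eqref{jpu} can then be obtained either by summing \eqref{rolj} over $a\ge b$ and $n\ge m$ — the $a$-sum is the negative-binomial identity $\sum_{j\ge0}\binom{b-1+j}{j}q_x^{\,j}=p_x^{-b}$, which cancels every $p_x$, and the $n$-sum is another negative-binomial series — or, more transparently, by rerunning the decomposition while recording only $\xi(x,\uparrow)$ and $\xi(y,\uparrow)$: the backward loops $BL_x$ at $x$ and the backward sub-excursions $BL_{yx}$ at $y$ are now unconstrained, so summing the corresponding geometric series produces the factors $p_x^{-1}$ (cancelling the $p_x$'s) and $\bigl(1-q_yD(x,y-1)/D(x,y)\bigr)^{-1}$, while prescribing $m-1$ forward loops $FL_y$ distributed over the $i$ blocks contributes the number of weak compositions $\binom{m+i-2}{i-1}$; collecting terms gives \eqref{jpu}.

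The main obstacle is the combinatorial bookkeeping of this nested excursion structure: one must establish rigorously that inside a forward loop $FL_x$ (resp. the escape $F_x$) every $y$-level excursion is an $FL_y$ or a $BL_{yx}$ and the block is closed by a $B_{yx}$ (resp. an $F_y$), and then allot the $a$ departures from $x$, the $n$ departures from $y$ and the $m$ up-jumps from $y$ among all the pieces. The delicate point is the asymmetry that the escape block is closed by an up-jump while the other $i-1$ blocks are closed by down-jumps; this is exactly what yields the coefficient $\binom{n-i}{m-1}$ (rather than, say, $\binom{n-i}{m}$) and the summation range $i\le b\wedge(n-m+1)$. Once the excursion types and their multiplicities are fixed, the probability weights follow from \eqref{pflx}--\eqref{pbyx} and the elementary identities for $D(m)$ and $D(m,n)$, and the remainder is a routine rearrangement.
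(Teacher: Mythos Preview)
Your proposal is correct and follows essentially the same combinatorial route as the paper's first proof: you decompose the trajectory into the loops, moves and escapes of Section~\ref{sec4}, index the possibilities by the number $i$ of forward excursions from $x$ that reach $y$, and obtain the four binomial factors $\binom{a-1}{b-1}\binom{b-1}{i-1}\binom{n-1}{i-1}\binom{n-i}{m-1}$ exactly as the paper does (your block/visit-count phrasing for the last two factors is just the compositional restatement of the paper's ``choose which departures from $y$ are closers'' count). Your derivation of \eqref{jpu} by summing \eqref{rolj} via the negative-binomial identity is likewise what the paper does.
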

\begin{remark}
  Setting $n=a$ and taking summation on  both sides of \eqref{rolj} over $b$ and $m$ from $1$ to $a,$ by some tedious manipulation, we get
  \begin{align*}
   \mathbb P(\xi&(x)=a,\xi(y)=a)\no\\
   &=\sum_{i=1}^a \binom{a-1}{i-1}^2 \z(\frac{p_x}{D(x,y)}\y)^{i}\z(1-\frac{p_x}{D(x,y)}\y)^{a-i}
   \z(q_y\z(1-\frac{D(x,y-1)}{D(x,y)}\y)\y)^{i-1}\no\\
   &\quad\quad\times \z(q_y\frac{D(x,y-1)}{D(x,y)}+p_y\z(1-\frac{1}{D(y)}\y)\y)^{a-i}\frac{p_y}{D(y)}.
 \end{align*}
  But we do not need such a formula because
 $C(a,*)$ is finite  if and only if $C(a,b)$ is finite for all $b\in \{1,...,a\},$ while $C(a,*)$  is infinite if and only if $C(a,b)$ is infinite for some $b\in \{1,...,a\}.$

 The formula in \eqref{jpu} looks heavy. If we take summation on  both sides of \eqref{jpu} over $m$ from $1$ to $\infty,$ then by some careful computation, we get $\mathbb P(\xi(x,\uparrow)=b)=\z(1-\frac{1}{D(x)}\y)^{b-1}\frac{1}{D(x)},b\ge1,$ which coincides with \eqref{dxb}.
\end{remark}
 We give two proofs of Lemma \ref{aaxy} from different scopes, i.e., the combinatorial one and the path decomposition one.
 \vspace{.3cm}

 \noindent{\it Proof of Lemma \ref{aaxy}(Combinatorial scope).}
  For simplicity, write
 \begin{align*}
   &\mathcal A=\{\xi(x)=a,\xi(x,\uparrow)=b,\xi(y)=n,\xi(y,\uparrow)=m\}.
 \end{align*}
 Let $FL_{xy},$ $F_{xy},$ $FL_{y},$ $F_y$ and $BL_x,$ $BL_{yx},$ $B_{yx}$  be the loops, moves and escapes defined in Definition \ref{dlm} whose probabilities are given in \eqref{pflx}-\eqref{pbyx}.

Notice that
the event $\mathcal A$ occurs if and only if for some $i,$
there are $a-b$ backward loops $BL_x$ at $x,$ $b-i$ forward loops $FL_{xy}$ at $x$ avoiding $y,$ $i$ forward moves $F_{xy}$ from $x$ to $y$, $m-1$ forward loops  $FL_{y}$ at $y,$  $n-m-(i-1)$ backward loops  $BL_{yx}$ at $y$ avoiding $x,$ $i-1$ backward moves $B_{yx}$ from $y$ to $x,$ and finally an escape $F_y$ from $y$ to $\infty.$ See Figure \ref{fig3} for the case $a=9,$ $n=8,$ $b=5,$ $m=4$ and $i=3.$
\begin{figure}[ht]
  \centering
  \includegraphics[width=13cm]{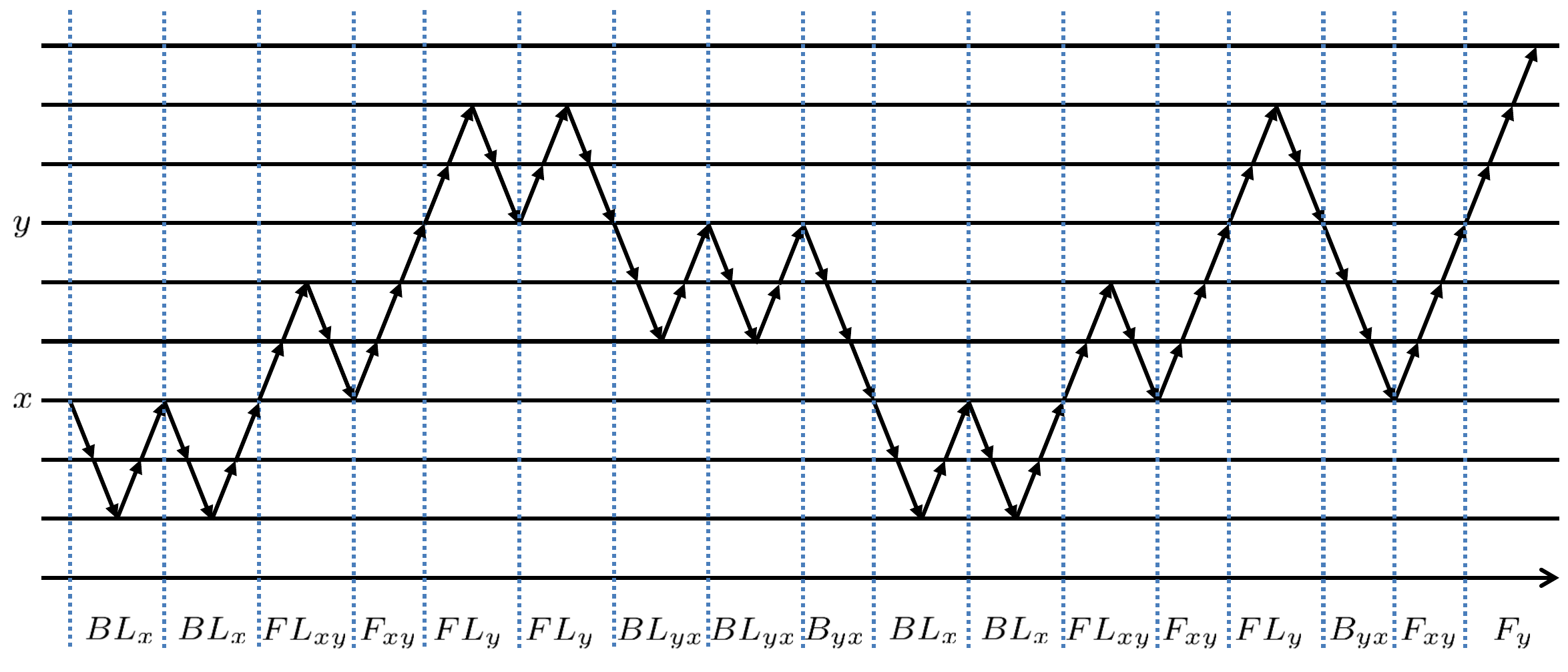}\\
  \caption{Suppose $a=9,$ $n=8,$ $b=5$ and $m=4.$ On the event $\mathcal A,$ if $i=3,$ then, there must be $4$ backward loops $BL_x$ at $x,$ $2$ forward loops $FL_{xy}$ at $x$ avoiding $y,$ $3$ forward moves $F_{xy}$ from $x$ to $y$, $3$ forward loops  $FL_{y}$ at $y,$  $2$ backward loops  $BL_{yx}$ at $y$ avoiding $x,$ $2$ backward moves $B_{yx}$ from $y$ to $x,$ and finally an escape $F_y$ from $y$ to $\infty.$ This figure is also helpful to understand the proof of \eqref{no}.}\label{fig3}
\end{figure}

Clearly, \begin{align}
  1\le i\le (n-m+1)\wedge b.\label{oi}
\end{align} If the order of those events is determined, in view of \eqref{pflx}-\eqref{pbyx}, by the Marov
property,  the probability of these events occurring one by one is
\begin{align}\label{pml}
  &q_x^{a-b}\z(p_x\z(1-\frac{1}{D(x,y)}\y)\y)^{b-i} \z(\frac{p_x}{D(x,y)}\y)^{i}\z(p_y\z(1-\frac{1}{D(y)}\y)\y)^{m-1}\no\\
  &\times \z(q_y\z(\frac{D(x,y-1)}{D(x,y)}\y)\y)^{n-(m-1)-i}
   \z(q_y\z(1-\frac{D(x,y-1)}{D(x,y)}\y)\y)^{i-1}\frac{p_y}{D(y)}
\end{align}
which does not depend on the order of these events.

Next, fix such a number $i.$ We claim that \begin{equation}\label{no}\begin{split}
  &\text{the possible number of the orders of these moves, loops,}\\
   &\text{and the escape is} \binom{a-1}{b-1}\binom{b-1}{i-1}\binom{n-1}{i-1}\binom{n-i}{m-1}.\end{split}
\end{equation}
 Putting \eqref{oi}, \eqref{pml} and \eqref{no} together, we get \eqref{rolj}.

 What is left for us to do is to prove \eqref{no}. Indeed, fix $1\le i\le b\wedge (n-m+1).$
On one hand, since the local time at $x$ is $a,$ the walk should visit $x$ for $a$ times, or in other words, it must jump away from $x$  to $x+1$ or $x-1$ for $a$ times. On the other hand, since the upcrossing time at $x$ is $b,$ the walk should jump from $x$ to $x+1$ for $b$ times and jump from $x$ to $x-1$ for $a-b$ times. Those jumps from $x$ to $x-1$ form the beginnings of those $a-b$ backward loops $BL_x,$ while those $b$ jumps from  $x$ to $x+1$ form the beginnings of those $i$ forward moves $F_{xy}$ and $b-i$ forward loops $FL_{xy}.$
But we know exactly that when standing at $x$ for the last time, it jumps from $x$ to $x+1$ and never returns to $x.$  Except for such a jump, we do not know exactly which $a-b$ jumps of the other $a-1$ jumps are from $x$ to $x-1.$  There are $\binom{a-1}{b-1}$ ways to choose $a-b$ jumps from those $a-1$ jumps to jump from $x$ to $x-1$ and form the beginnings of those $a-b$ backward loops $BL_x.$

Up to now, we already know which $b$ jumps (out of $a$ jumps) of the walk are from $x$ to $x+1.$ They form the beginnings of those $b-i$ forward loops $FL_{xy}$ and  $i$ forward moves $F_{xy}.$  As mentioned above,   the last jump of those $b$ jumps is from  $x$ to $x+1,$ which  forms the beginning of an upward move $F_{xy}.$ There are  $\binom{b-1}{i-1}$ ways to choose $b-i$ jumps from the left $b-1$ jumps  to form the beginning of those $b-i$ forward loops $FL_{xy}$ and the other $i-1$ jumps  to form the beginnings of the other $i-1$ forward moves $F_{xy}.$

We now know there are $b$ jumps that the walk takes from $x$ to $x+1$ and already know  which $b-i$ jumps of them form the beginnings of the forward loops $FL_{xy}$ (which end with a jump from $x+1$ to $x$) and which $i$ jumps of them form the beginnings of the forward moves $F_{xy}.$ But since the upcrossing time at $x$ is $b,$ except for the last time, every time the walk jumps forward from $x$ to $x+1,$ after a number of steps, it must return to $x.$
Thus, except for the last forward move $F_{xy}$ from $x$ to $y,$ the other $i-1$ forward moves $F_{xy}$ from $x$ to $y$ must be followed by a  number (possibly $0$) of forward loops $FL_{y}$ and/or a number(possibly $0$) of backward loops $BL_{yx},$ and then, with probability $1$ a backward move $F_{yx}$ from $y$ to $x.$ The walk jumps from $y$ to $y+1$ or $y-1$ for $n$ times. We also know that when the last time the walk standing at $y,$ it jumps from $y$ to $y+1$ and never returns to $y.$ There are $\binom{n-1}{i-1}$ possible ways to choose $i-1$ jumps from the other $n-1$ jumps to form the beginning of those backward moves $B_{yx}$ from $y$ to $x.$

Until now, we have fixed $i-1$ jumps from $y$ to $y-1,$ which are the beginnings of those $i-1$ backward moves $F_{yx}$ from $y$ to $x$ and one jump from $y$ to $y+1,$ which is the beginning of the escape $F_y$ from $y$ to $\infty.$ But the walk will leave $y$ for $n$ times. Therefore, except for the above already fixed $i$ jumps, the other $n-i$ jumps that the walk jumps away from $y$ should be the beginnings of those $m-1$ forward loops $FL_y$ and $n-i-(m-1)$ backward loops at $y.$ There are $\binom{n-i}{m-1}$ ways to choose $m-1$ jumps from those $n-i$ jumps to serve as the beginnings of those forward loops $FL_y$  at $y.$ So far, we have already determined the order of that escape and those loops and moves.

Summing up the above discussion, we get \eqref{no}. Thus, \eqref{rolj} is proved.

Finally, taking summation on both sides of \eqref{rolj} over $a$ and $n$ from $b$ to $\infty$ and $m$ to $\infty,$ respectively, and using the formula $
  \sum_{j=0}^\infty\binom{k+j}{j} t^{j}=\frac{1}{(1-t)^{k+1}},
$ by some tedious but elementary computation,  we get \eqref{jpu}.  The lemma is proved. \qed

\begin{remark}
  The above combinatorial proof of Lemma \ref{aaxy} related to loops, moves and escapes is  relatively short and easy to understand. The idea of such a proof comes from  an anonymous referee of an early unpublished version of our manuscript \cite{w23}, which contains only criterion for the finiteness of $C(a,*),$ and the limit distribution of $C(1,1).$  But we worry that some of the readers may doubt the rigorousness of such a combinatorial proof. So we also provide a path decomposition proof here, which is much longer.
\end{remark}

\noindent{\it Proof of Lemma \ref{aaxy}(Path decomposition scope).}
In this proof, for a $j$-dimensional  vector $\vec{\mathbf v},$ we stipulate that its $k$th coordinate is $v_k,$ $1\le k\le j.$   For two $j$-dimensional vectors $\vec{\mathbf w}$ and $\vec{\mathbf u}$ we write $\vec{\mathbf w}\prec \vec{\mathbf u}$ if $w_j<u_j$ and $w_k\le u_k,$ $1\le k\le j-1.$

We show only \eqref{rolj}, since \eqref{jpu} can be derived from \eqref{rolj}.
Denote again $\mathcal A=\{\xi(x)=a,\xi(x,\uparrow)=b,\xi(y)=n,\xi(y,\uparrow)=m\}.$
Let $\tau_0=0,$ and for $k\ge 1,$ let $$\tau_k=\inf\{n>\tau_{k-1}:X_{n-1}=x, X_{n}=x+1\}.$$
Then we have $$0=\tau_0\le \tau_1\le \tau_2\le ...\le \tau_b\le \tau_{b+1}\le \infty,$$ where for $k\ge1,$ $\tau_k<\tau_{k+1}$ if $\tau_{k+1}<\infty.$ Notice that on $\mathcal A,$ $\tau_{b}<\tau_{b+1}=\infty.$
For $1\le k\le b,$ let
\begin{align}
 V_k&=\#\{\tau_{k-1}\le n<\tau_k: X_n=x\},\label{dv}\\
U_k&=\#\{\tau_{k}\le n<\tau_{k+1}: X_n=y\}.\label{du}
\end{align}
Furthermore, let
\begin{align}
  W_b+1=\#\{\tau_{b}\le n<\tau_{b+1}: X_{n-1}=y,X_n=y+1\}\label{dw1}
\end{align} and
for $1\le k\le b-1,$ set
\begin{align}\label{dw2}
  W_k=\#\{\tau_{k}\le n<\tau_{k+1}: X_{n-1}=y,X_n=y+1\}.
\end{align}
Clearly, $\mathcal A$ occurs if and only if
\begin{align*}
  &V_k\ge 1,\ k=1,...,b,\ V_1+...+V_b=a,\\
  &U_b\ge1,\ U_k\ge 0,\ k=1,...,b-1,\ U_1+...+U_b=n,\\
  &0\le W_b<U_b,\ 0\le W_k\le U_k,\ k=0,...,b-1,\ W_1+...+W_b=m-1,
\end{align*} or equivalently,
 $$\vec{\mathbf{V}}\in S(a,b),\ \vec{\mathbf U}\in \tilde{S}(n,b),\ \vec{\mathbf W}\in S(m-1,b) \text{ and }  \vec{\mathbf W}\prec \vec{\mathbf U}.$$
Now fix $\vec{\mathbf v}\in S(a,b),$ $\vec{\mathbf u}\in \tilde{S}(n,b)$ and $\vec{\mathbf w}\in S(m-1,b).$ Denote
\begin{align}\label{davuw}
  \mathcal A(\vec{\mathbf v},\vec{\mathbf u}, \vec{\mathbf w})=\{\vec{\mathbf{V}}=\vec{\mathbf v},\vec{\mathbf{U}}=\vec{\mathbf u},\vec{\mathbf{W}}=\vec{\mathbf w}, \vec{\mathbf w}\prec \vec{\mathbf u}\}.
\end{align}
In order to compute the probability of the event $\mathcal A(\vec{\mathbf v},\vec{\mathbf u}, \vec{\mathbf w}),$ for $v,u,w\ge0,$ we define
\begin{align*}
  \mathcal B(x,v)&=\left\{\begin{array}{l}
    \text{starting from }  x, \text{ before it hits }x+1, \text{ the }\\
    \text{walk visits }x\ v\text{ times and then hits } x+1
  \end{array}
\right\};\\
\mathcal C(x,y,u,w)&=\left\{\begin{array}{l}
    \text{starting from }  x+1, \text{ before it hits }x, \text{ the walk }\\
   \text{visits } y\ u\text{ times, jumps from } y \text{ to } y+1\ w \text{ times}\\
     \text{and then, after a number of steps, it hits } x
  \end{array}
\right\};\\
\tilde{\mathcal C}(x,y,u,w)&=\left\{\begin{array}{l}
    \text{starting from }  x+1, \text{ the walk never returns back}\text{ to }x \\
\text{and visits } y\ u\text{ times, jumps from } y \text{ to } y+1\ w \text{ times}
  \end{array}
\right\}.
\end{align*}
 It is easily seen that
    \begin{align}\label{pbxv}
      \mathbb P(\mathcal B(x,v))=q_x^{v-1}p_x,\ v\ge 1.
    \end{align}
    Furthermore,
 we claim that \begin{align}\label{prc1}
  \mathbb P(\mathcal C(x,y,0,0))&=P_{x+1}(x,y,-) \end{align}
  and
  \begin{align}\label{prc}
 \mathbb P(\mathcal C(x,y,u,w))&=\binom{u-1}{w}P_{x+1}(x,y,+)(q_yP_{y-1}(x,y,+))^{u-w-1}\no\\
 &\times(p_yP_{y+1}(y,\infty,-))^wq_yP_{y-1}(x,y,-) \text{ for }0\le w< u.
\end{align}
Indeed, the event $\mathcal C(x,y,0,0)$ occurs if and only if starting from $x+1,$ the walk hits $x$ before it hits $y,$ which occurs with probability $P_{x+1}(x,y,-).$ Thus, we get \eqref{prc1}.  Next, fixing  $u>w\ge 0,$ we show \eqref{prc}. Notice that the event $\mathcal C(x,y,u,w)$ occurs if and only if the following events occur:

 (i) starting from $x+1,$  the walk hits $y$ before it hits $x;$

 (ii) restarting from $y,$ before it hits $x,$ it has $u$ jumps   away from  $y,$ of which the last one is from $y$ to $y-1$, and $w$ jumps are from $y$ to $y+1,$ and the other $u-w-1$ jumps are from $y$ to $y-1$.

 Clearly, the probability of the first event equals $P_{x+1}(x,y,+)$).
To see the probability of the second event, notice that standing at $y$ for the last time, the walk jumps to $y-1,$ and restarting from $y-1,$ it hits $x$ before it hits $y,$ with probability $q_yP_{y-1}(x,y,-).$ Next, we see what happens after the other $u-1$ times that the walk stands at $y.$ For $w$ times, it moves from $y$ to $y+1$ and returns back to $y,$ with probability $p_yP_{y+1}(y,\infty,-).$ And for $u-w-1$ times, it moves from $y$ to $y-1,$ and restarting from $y-1,$ it returns back to $y$ before it hits $x,$ with probability $q_yP_{y-1}(x,y,+).$ But there are $\binom{u-1}{w}$ possible ways to choose $w$ jumps from those $u-1$ jumps to move forward from $y$ to $y+1.$ Consequently, the probability of the second event equals
\begin{align*}
  \binom{u-1}{w}(q_yP_{y-1}(x,y,+))^{u-w-1}(p_yP_{y+1}(y,\infty,-))^wq_yP_{y-1}(x,y,-).
\end{align*}
Therefore, \eqref{prc} is true.

Putting \eqref{prc1} and \eqref{prc} together, we get
\begin{align}\label{pr}
  \mathbb P(\mathcal C(x,y,u,w))&=1_{\{u=w=0\}}P_{x+1}(x,y,-)\no\\
  &\quad\quad+ 1_{\{0\le w<u\}}\binom{u-1}{w}P_{x+1}(x,y,+)(q_yP_{y-1}(x,y,+))^{u-w-1}\no\\
  &\quad\quad\quad\quad\quad\quad\quad\quad\quad\times(p_yP_{y+1}(y,\infty,-))^wq_yP_{y-1}(x,y,-)
\end{align}

 Notice that if $\tilde{\mathcal C}(x,y,u,w)$ occur,   when standing at $y$ for the last time, the walk moves forward from $y$ to $y+1$ and never returns back to $y,$ with probability $p_yP_{y+1}(y,\infty,+).$ Thus similar to \eqref{prc}, we obtain
 \begin{align}\label{prtc}
 \mathbb P( \tilde{\mathcal C}(x,y,u,w))=&\binom{u-1}{w}P_{x+1}(x,y,+)(q_yP_{y-1}(x,y,+))^{u-w-1}\no\\
 &\times(p_yP_{y+1}(y,\infty,-))^wp_yP_{y+1}(y,\infty,+) \text{ for }0\le w<u.
\end{align}

 We are now ready to compute the probability of the event $\mathcal A(\vec{\mathbf v},\vec{\mathbf u}, \vec{\mathbf w})$ defined in \eqref{davuw}. Notice that $\vec{\mathbf w}\prec \vec{\mathbf u}.$ Thus, if $u_i=0,$ then $w_i=0,$ for $1\le i\le b-1.$

In view of \eqref{dv}-\eqref{dw2}, $\mathcal A(\vec{\mathbf v},\vec{\mathbf u}, \vec{\mathbf w})$ occurs if and only if the event
$$\bigcap_{k=1}^{b-1}\mathcal B(x,v_k)\mathcal C(x,y,u_k,w_k)\mathcal B(x,v_b)\tilde{\mathcal C}(x,y,u_b,w_b)$$
occurs. Thus, by the Markov property, on accounting of \eqref{pbxv}, \eqref{pr} and \eqref{prtc}, we have
\begin{align}
  \mathbb P(\mathcal A&(\vec{\mathbf v},\vec{\mathbf u}, \vec{\mathbf w}))=\mathbb P\z(\bigcap_{k=1}^{b-1}\mathcal B(x,v_k)\mathcal C(x,y,u_k,w_k)\mathcal B(x,v_b)\tilde{\mathcal C}(x,y,u_b,w_b)\y)\no\\
  &=\z(\prod_{k=1}^{b-1}P(\mathcal B(x,v_k))\mathbb P(C(x,y,u_k,w_k))\y)P(\mathcal B(x,v_b))\mathbb P(\tilde{\mathcal C}(x,y,u_b,w_b))\no\\
  &=\bigg(\prod_{k=1}^{b-1}q_x^{v_k-1}p_x\bigg(1_{\{u_k=0\}}P_{x+1}(x,y,-)\no\\
   &\quad\quad\quad\quad\quad\quad+1_{\{u_k\ne 0\}}\binom{u_k-1}{w_k}P_{x+1}(x,y,+)(p_yP_{y+1}(y,\infty,-))^{w_k}\no\\
   &\quad\quad\quad\quad\quad\quad\quad\quad\times(q_yP_{y-1}(x,y,+))^{u_k-w_k-1}q_yp_{y-1}(x,y,-)\bigg)\bigg)\no\\
   &\quad\quad\times \bigg(q_x^{v_b-1}p_x
   \binom{u_b-1}{w_b} P_{x+1}(x,y,+)\no\\
   &\quad\quad\times(p_yP_{y+1}(y,\infty,-))^{w_b}(q_yP_{y-1}(x,y,+))^{u_b-w_b- 1}p_yp_{y+1}(y,\infty,+)\bigg)\no\\
   &=q_x^{a-b}p_x^b\z(\prod_{k=1}^{b}\binom{u_k-1}{w_k}\y)(P_{x+1}(x,y,-))^{\sum_{k=1}^{b-1}1_{u_k=0}}\no\\
   &\quad\quad\times (P_{x+1}(x,y,+))^{\sum_{k=1}^{b}1_{u_k\ne0}}\z(p_yP_{y+1}(y,\infty,-)\y)^{\sum_{k=1}^{b-1}w_k1_{u_k\ne0}}\no\\
   &\quad\quad\times \z(q_yP_{y-1}(x,y,+)\y)^{\sum_{k=1}^{b-1}(u_k-w_k-1)1_{u_k\ne 0}}(q_yP_{y-1}(x,y,-))^{\sum_{k=1}^{b-1}1_{u_k\ne0}}\no\\
  &\quad\quad\times(p_yP_{y+1}(y,\infty,-))^{w_b}(q_yP_{y-1}(x,y,+))^{u_b-w_b-1}p_yp_{y+1}(y,\infty,+),\no
\end{align}
where and in what follows, we use the convention that $\binom{u_k-1}{w_k}=1$ if $w_k\ge u_k-1,$ $k=1,...,b.$

Since $\vec{\mathbf{v}}\in S(a,b),$ $\vec{\mathbf{u}}\in\tilde{S}(n,b),$  $\vec{\mathbf w}\in S(m-1,b),$ and $w_k\le  u_k$ for all $1\le k\le b,$ thus, we have $u_b\ge1,$ $w_b+\sum_{k=1}^{b-1}w_k1_{u_k\ne 0}=\sum_{k=1}^{b}w_b=m-1$ and $u_b+\sum_{k=1}^{b-1}u_b1_{u_b\ne0}=n.$ Therefore, we get
  \begin{align}
   \mathbb P(\mathcal A&(\vec{\mathbf v},\vec{\mathbf u}, \vec{\mathbf w}))\no\\
   &=q_x^{a-b}p_x^b\z(\prod_{k=1}^{b}\binom{u_k-1}{w_k}\y)(P_{x+1}(x,y,-))^{\sum_{k=1}^{b}1_{u_k=0}}\no\\
   &\quad\quad\times (P_{x+1}(x,y,+))^{\sum_{k=1}^{b}1_{u_k\ne0}}\z(p_yP_{y+1}(y,\infty,-)\y)^{m-1}\no\\
   &\quad\quad\times \z(q_yP_{y-1}(x,y,+)\y)^{n-(m-1)-\sum_{k=1}^{b}1_{u_k\ne0}}(q_yP_{y-1}(x,y,-))^{\sum_{k=1}^{b}1_{u_k\ne0}-1}\no\\
  &\quad\quad\times p_yp_{y+1}(y,\infty,+).\no
    \end{align}
    Consequently, taking \eqref{ep} and Lemma \ref{ns} into account, we conclude that
  \begin{align}
  \mathbb P(\xi&(x)=a,\xi(x,\uparrow)=b,\xi(y)=n,\xi(y,\uparrow)=m)\no\\
  &=\sum_{\vec{\mathbf v}\in S(a,b),\vec{\mathbf u}\in \tilde S(n,b),\vec{\mathbf w}\in S(m-1,b),\vec{\mathbf w}\prec \vec{\mathbf u} }\mathbb P(\mathcal A(\vec{\mathbf v},\vec{\mathbf u}, \vec{\mathbf w}))\no\\
   &=\sum_{\vec{\mathbf v}\in S(a,b),\vec{\mathbf u}\in \tilde S(n,b),\vec{\mathbf w}\in S(m-1,b),\vec{\mathbf w}\prec \vec{\mathbf u} }
   q_x^{a-b}p_x^b\z(\prod_{k=1}^{b}\binom{u_k-1}{w_k}\y)\frac{p_y}{D(y)}\no\\
  &\times \z(1-\frac{1}{D(x,y)}\y)^{\sum_{k=1}^{b}1_{u_k=0}} \z(\frac{1}{D(x,y)}\y)^{\sum_{k=1}^{b}1_{u_k\ne0}}\z(p_y\z(1-\frac{1}{D(y)}\y)\y)^{m-1}\no\\
  &\times \z(q_y\frac{D(x,y-1)}{D(x,y)}\y)^{n-(m-1)-\sum_{k=1}^{b}1_{u_k\ne0}}
   \z(q_y\z(1-\frac{D(x,y-1)}{D(x,y)}\y)\y)^{\sum_{k=1}^{b}1_{u_k\ne0}-1}\no\\
   &=\sum_{i=1}^{b\wedge (n-m+1)}\binom{a-1}{b-1}\sum_{\vec{\mathbf u}\in \tilde S_i(n,b),\vec{\mathbf w}\in S(m-1,b),\vec{\mathbf w}\prec \vec{\mathbf u} }
   q_x^{a-b}p_x^b\z(\prod_{k=1}^{b}\binom{u_k-1}{w_k}\y)\frac{p_y}{D(y)}\no\\
  &\times \z(1-\frac{1}{D(x,y)}\y)^{b-i} \z(\frac{1}{D(x,y)}\y)^{i}\z(p_y\z(1-\frac{1}{D(y)}\y)\y)^{m-1}\no\\
  &\times \z(q_y\frac{D(x,y-1)}{D(x,y)}\y)^{n-(m-1)-i}
   \z(q_y\z(1-\frac{D(x,y-1)}{D(x,y)}\y)\y)^{i-1}\no\\
   &=\sum_{i=1}^{b\wedge (n-m+1)}\binom{a-1}{b-1}\binom{b-1}{i-1}\sum_{\vec{\mathbf u}\in S(n,i),\vec{\mathbf w}\in S(m-1,i),\vec{\mathbf w}\prec \vec{\mathbf u} }
   q_x^{a-b}p_x^b\z(\prod_{k=1}^{i}\binom{u_k-1}{w_k}\y)\no\\
  &\times \frac{p_y}{D(y)}\z(1-\frac{1}{D(x,y)}\y)^{b-i} \z(\frac{1}{D(x,y)}\y)^{i}\z(p_y\z(1-\frac{1}{D(y)}\y)\y)^{m-1}\no\\
  &\times \z(q_y\frac{D(x,y-1)}{D(x,y)}\y)^{n-(m-1)-i}
   \z(q_y\z(1-\frac{D(x,y-1)}{D(x,y)}\y)\y)^{i-1}\no\\
    &=\binom{a-1}{b-1}\sum_{i=1}^{b\wedge (n-m+1)}\binom{b-1}{i-1}\binom{n-1}{i-1}\binom{n-i}{m-1}
   q_x^{a-b}p_x^b\frac{p_y}{D(y)}\no\\
  &\times \z(1-\frac{1}{D(x,y)}\y)^{b-i} \z(\frac{1}{D(x,y)}\y)^{i}\z(p_y\z(1-\frac{1}{D(y)}\y)\y)^{b-1}\no\\
  &\times \z(q_y\frac{D(x,y-1)}{D(x,y)}\y)^{a-(b-1)-i}
   \z(q_y\z(1-\frac{D(x,y-1)}{D(x,y)}\y)\y)^{i-1},\no
\end{align}
where for the last equality we use the fact that   $$\sum_{\vec{\mathbf w}\in S(m-1,i),\vec{\bf w}\prec \vec{\bf u}}\prod_{k=1}^i\binom{u_k-1}{w_k}=\binom{n-i}{m-1}, \text{ for each }  \vec{\bf u}\in S(n,i). $$
The lemma is proved. \qed

\subsubsection{Proof of Proposition \ref{propc} for $C=C(a,b)$ or $C(*,a)$}
\proof Fix $a\ge b\ge1.$ Suppose $\rho_k\rightarrow 1$ as $k\rto.$  Then  we have $p_x\rightarrow 1/2,$ $q_x\rightarrow 1/2,$ $D(x)\rto $ as $x\rto$  and $p_k\vee p_k^{-1}\vee q_k\vee q_k^{-1}<c,$ $\rho_k\vee \rho_k^{-1}\le c,$ for all $k\ge1.$ Firstly, it follows directly from \eqref{dxa} and \eqref{dxb} that
\begin{align}\label{pxcu}
  \mathbb P(x\in C(a,b))<\binom{a-1}{b-1}\frac{1}{D(x)},\ \mathbb P(x\in C(*,a))<\frac{1}{D(x)}
\end{align}
and
\begin{align*}
  \lim_{x\rto}\mathbb P(x\in C(a,b))D(x)=\frac{1}{2^a}\binom{a-1}{b-1},\ \lim_{x\rto}\mathbb P(x\in C(*,a))D(x)=1.
\end{align*}
We thus get \eqref{lpc} for $C=C(a,b)$ or $C(*,a).$

Furthermore, it is straightforward to see from  \eqref{dxa}, \eqref{rolj} and \eqref{dxb}, \eqref{jpu} that
\begin{align*}
  \mathbb P(y\in C|x\in C)<\frac{cD(x)}{D(x,y)D(y)},\text{ for }y\ge x\ge 0, \ C=C(a,b) \text{ or }C(*,a),
\end{align*}
which proves \eqref{wca}. Putting \eqref{wca} and \eqref{pxcu} together, on accounting of \eqref{xm} and \eqref{csa}-\eqref{cabx}, we get \eqref{jkbw}.

Next, we show \eqref{pw}. For this purpose, on one hand,
 from \eqref{dxa} and \eqref{rolj},  we get
\begin{align}
&\frac{\mathbb P(x\in C(a,b),y\in C(a,b))}{\mathbb P(x\in C(a,b))\mathbb P(y\in C(a,b))}\no\\ &=\frac{D(x)}{D(x,y)}\z(1-\frac{1}{D(x,y)}\y)^{b-1}\z(\frac{D(x,y-1)}{D(x,y)}\y)^{a-b}\z(1-\frac{1}{D(x)}\y)^{1-b}\no\\
&+\frac{D(x)}{D(x,y)}\binom{a-1}{b-1}^{-1}\sum_{i=2}^{b\wedge (a-b+1)}\binom{b-1}{i-1}\binom{a-i}{b-1}\binom{a-1}{i-1}\no\\
&\quad\quad\times\z(\frac{1}{D(x,y)}\y)^{i-1}\z(1-\frac{1}{D(x,y)}\y)^{b-i}\z(\frac{D(x,y-1)}{D(x,y)}\y)^{a-b+1-i}\no\\
   &\quad\quad\times \z(1-\frac{D(x,y-1)}{D(x,y)}\y)^{i-1}\z(1-\frac{1}{D(x)}\y)^{1-b}\no\\
   & =:(A)+(B).\label{ab}
    \end{align}
On the other hand, taking \eqref{dxb} and \eqref{jpu} together, we infer that
    \begin{align}
&\frac{\mathbb P(x\in C(*,a),y\in C(*,a))}{\mathbb P(x\in C(*,a))\mathbb P(y\in C(*,a))}\no\\
  &=\frac{D(x)}{D(x,y)} \z(1-\frac{1}{D(x,y)}\y)^{a-1}  \z(\frac{p_{y}}{1-q_{y}\frac{D(x,y-1)}{D(x,y)}}\y)^{a} \z(1-\frac{1}{D(x)}\y)^{1-a}\no\\
   & +   \frac{D(x)}{D(x,y)} \sum_{i=2}^{a}\binom{a-1}{i-1}\binom{a+i-2}{i-1}\z(1-\frac{1}{D(x,y)}\y)^{a-i} \z(\frac{1}{D(x,y)}\y)^{i-1} \no\\
  &\quad\quad\times \z(\frac{p_{y}}{1-q_{y}\frac{D(x,y-1)}{D(x,y)}}\y)^{a}
   \z(\frac{q_{y}\z(1-\frac{D(x,y-1)}{D(x,y)}\y)}{1-q_{y}\frac{D(x,y-1)}{D(x,y)}}\y)^{i-1}\z(1-\frac{1}{D(x)}\y)^{1-a}\no\\
&=:(D)+(E). \label{de}
\end{align}

Fix $\ve>0$ and $\delta>0$. We claim that there exists $M_1>0$  such that   $(A)$, $(B)$ in \eqref{ab} and $(D),$ $(E)$ in  \eqref{de} satisfy
    \begin{gather}\label{be}
       \frac{D(x,y)}{D(x)}\times \max\{(B),(E)\}  \le \delta/2, \ x>M_1,\ y>x+M_1,\\
    \label{ae}
       1-\delta/2\le\frac{D(x,y)}{D(x)}\times (A) \le 1+\delta/2, \ x>M_1,\ y>x+M_1,
    \end{gather}
    and
    \begin{align}\label{dve}
       1-\delta/2\le\frac{D(x,y)}{D(x)}\times (D) \le 1+\delta/2,\  x>M_1,\ y>x+M_1.
    \end{align}
    Indeed, choose $\eta>0$ such that  \begin{gather}
      \binom{a-1}{b-1}^{-1}\sum_{i=2}^{b\wedge (a-b+1)}\binom{b-1}{i-1}\binom{a-i}{b-1}\binom{a-1}{i-1}\eta(1+\eta)^{b-1} < \delta/2,\label{eea}\\
      \sum_{i=2}^{a}\binom{a-1}{i-1}\binom{a+i-2}{i-1}\eta\z(1+\eta\y)^{2a-2}<\delta/2,\label{eed}\\
      (1+\eta)^{a-1}<1+\delta/2,\  (1-\eta)^{2a-1}>1-\delta/2.\label{eeb}
      \end{gather}
        Since $\lim_{k\rto}\rho_k=1,$  by Lemma \ref{edxy}, there exists $M_1>0$ such that
\begin{gather}
  \z(1-\frac{1}{D(x)}\y)^{-1}<1+\eta,\
  \z(1-\frac{1}{D(y)}\y)^{-1}<1+\eta,\label{dea} \\ \frac{1}{D(x,y)}<\eta,\ 1-\frac{D(x,y-1)}{D(x,y)}<\eta,\  \frac{q_{y}}{1-q_{y}\frac{D(x,y-1)}{D(x,y)}}<1+\eta,\label{ded}\\
  \frac{p_{y}}{1-q_{y}\frac{D(x,y-1)}{D(x,y)}}>1-\eta, \ x>M_1, \ y>x+M_1.
  \label{deb}
\end{gather}
Thus, on accounting of \eqref{eea} and \eqref{eed}, we have
\begin{align}
&\frac{D(x,y)}{D(x)}\times (B) \no\\
&\quad\quad\le \binom{a-1}{b-1}^{-1}\sum_{i=2}^{b\wedge (a-b+1)}\binom{b-1}{i-1}\binom{a-i}{b-1}\binom{a-1}{i-1}\eta^{2i-2}(1+\eta)^{b-1}<\frac{\delta}{2}, \no\\
&\frac{D(x,y)}{D(x)}\times (E)\le \sum_{i=2}^{a}\binom{a-1}{i-1}\binom{a+i-2}{i-1}\eta^{2i-2}\z(1+\eta\y)^{2a-2}<\frac{\delta}{2},\no
    \end{align}
for all $x>M_1,y>x+M_1.$ We thus get \eqref{be}. To prove \eqref{ae} and \eqref{dve}, it is easy to see from \eqref{dea}-\eqref{deb} that
\begin{gather}\label{bbc}
 (1-\eta)^{a-1}\le\frac{D(x,y)}{D(x)}\times (A) \le(1+\eta)^{b-1}, \ x>M_1,\ y>x+M_1,\\
\label{bbd} (1-\eta)^{2a-1}\le\frac{D(x,y)}{D(x)}\times (D) \le(1+\eta)^{a-1},\ x>M_1,\ y>x+M_1.
\end{gather}
Consequently, putting \eqref{eeb}, \eqref{bbc} and \eqref{bbd}  together, we get \eqref{ae} and \eqref{dve}.
Finally, substituting \eqref{be}-\eqref{ae} into \eqref{ab} and \eqref{de}, we get
\begin{align}\label{pwd}
1-\delta\le \frac{D(x,y)}{D(x)}\frac{\mathbb P(x\in C,y\in C)}{\mathbb P(x\in C)\mathbb P(y\in C)}
  \le1+\delta,\end{align}
   for $ x>M_1,y>x+M_1,$  $ C=C(a,b)$  or $C(*,a).$
   But by \eqref{lpc},  there exists a number $M_2>0$ such that
\begin{align}\label{dkl}
  (\lambda_C-\delta)/D(k)\le \mathbb P(k\in C) \le (\lambda_C+\delta)/D(k),
\end{align}
for $k> M_2,$ $C=C(a,b)$ or $C(*,a).$  Therefore, letting $N=M_1\vee M_2,$ from \eqref{pwd}, we obtain
    \begin{align}
\frac{(\lambda_C-\delta)(1-\delta)D(x)}{D(x,y)D(y)}\le \mathbb P(y\in C|x\in C)\le\frac{(\lambda_C+\delta)(1+\delta)D(x)}{D(x,y)D(y)},  \label{wcdd}
  \end{align}
for $x>N,$ $y>x+ N,$ $C=C(a,b)$ or $C(*,a).$
Taking \eqref{dkl} and \eqref{wcdd} together,  thanks to \eqref{xm} and \eqref{csa}-\eqref{cabx}, we conclude that
\begin{align}\label{jawd}
\frac{((\lambda_C-\delta)(1-\delta))^k}{D(j_k)\prod_{i=1}^{k-1}D(j_i,j_{i+1})} &\le \mathbb P(j_1\in C,...,j_k\in C)\le \frac{((\lambda_C+\delta)(1+\delta))^k}{D(j_k)\prod_{i=1}^{k-1}D(j_i,j_{i+1})},
\end{align}
 for $0=j_0<j_1<...<j_k,\ j_i-j_{i-1}>N,\ i=1,...,k,$  $C=C(a,b)$ or $C(*,a).$
  Choosing $\delta$ small enough such that $(\lambda_C-\delta)(1-\delta)>\lambda_C-\ve$ and
$(\lambda_C+\delta)(1+\delta)<\lambda_C+\ve,$ from \eqref{pwd}, \eqref{wcdd} and \eqref{jawd}, we get  \eqref{pw}, \eqref{wc} and \eqref{jkaw}.

Finally, by multiplying $\mathbb P(x\in C),$ $C=C(a,b)$ or $C(*,a)$ on both sides of  \eqref{ab} or \eqref{de}, respectively, and discarding the second term on the righthand side, owing to  Lemma \ref{lxa}, we have
\begin{align}
  \mathbb P(x&\in C(a,b)|y\in C(a,b))\ge (A)\times \mathbb P(x\in C(a,b))\no\\
  &=\binom{a-1}{b-1}p_x^bq_x^{a-b}\frac{1}{D(x,y)}\z(1-\frac{1}{D(x,y)}\y)^{b-1}\z(\frac{D(x,y-1)}{D(x,y)}\y)^{a-b}\no\\
  &\ge \binom{a-1}{b-1}\frac{c}{D(x,y)} \z(1-\frac{1}{1+\rho_{x+1}}\y)^{b-1}\z(\frac{1}{1+\rho_{y-1}}\y)^{a-b}\no\\
  &\ge \frac{c}{D(x,y)}, \ y\ge x\ge0,\no
\end{align}
and \begin{align}
  \mathbb P(x&\in C(*,a)|y\in C(*,a))\ge (D)\times \mathbb P(x\in C(*,a))\no\\
   &=\frac{1}{D(x,y)} \z(1-\frac{1}{D(x,y)}\y)^{a-1}  \z(\frac{p_{y}}{1-q_{y}\frac{D(x,y-1)}{D(x,y)}}\y)^{a} \no\\
  &\ge \frac{1}{D(x,y)}\z(1-\frac{1}{1+\rho_{x+1}}\y)^{a-1}p_y^a\ge \frac{c}{D(x,y)},\ y\ge x\ge0.\no
\end{align}
That is, \eqref{pdl} is true for $C=C(a,b)$ or $C(*,a).$
 We thus finish the proof of Proposition \ref{propc}  for $C=C(a,b)$ or $C(*,a).$ \qed
\section{Proof of Theorem \ref{main}}\label{s4}

Based on Lemma \ref{lemm} and Proposition \ref{propc}, we give the proof of Theorem \ref{main} in this  section.

\proof  Let $\Gamma_n=\{n\in C(a,b)\},$ or $\{n\in C_w\},$ or $\{n\in C(*,a)\},n\ge 1.$  Let $D(m)$ and $D(m,n)$ be as in \eqref{ddmn} and \eqref{dm}. Suppose that $\rho_k$ is increasing in $k>N_0$ for some $N_0>0$ and $\rho_k\rightarrow 1$ as $k\rto.$
Then clearly, we have
\begin{gather}
  D(n)\ge D(m),\ n>m>N_0,\label{dic}\\
  D(m,n)\le c(n-m),\ n>m\ge0, \label{dmnu}
\end{gather}
and from \eqref{lpc}, we see that \eqref{dng} holds.

Suppose now $\sum_{n=2}^\infty\frac{1}{D(n)\log n}<\infty.$ If $D(0)=\infty,$ then by Proposition \ref{crt}, the chain $X$ is recurrent almost surely. Therefore, $|C(a,b)|=|C(*,a)|=|C_w|=0$ almost surely, for $a\ge b\ge1.$ Assume next $D(0)<\infty.$  From Lemma \ref{lemm}, we see that \eqref{cki} is fulfilled. Putting \eqref{pdl} and \eqref{dmnu} together, we deduce that
\eqref{dxyc} is fulfilled.  Therefore, applying part (i) of Theorem \ref{fis}, we conclude that
\begin{align}
  |C(a,b)|\vee |C(*,a)|\vee |C_w|<\infty,\ a\ge b\ge1,\label{abs}
\end{align} almost surely.
Notice that for each $A\subseteq \{a,a+1,...\},$  $B\subset \{1,2,...,a\},$ we must have $|C(A,a)|<C(*,a)$ and $|C(a,B)|=\sum_{b\in B}|C(a,b)|.$ Consequently, we infer from \eqref{abs} that $|C(A,a)|<\infty$  and $|C(a,B)|<\infty$ almost surely for each $a\in \{1,2,...\}$ $A\subseteq \{a,a+1,...\},$ and $B\subset\{1,2,...,a\}.$ The convergent part of Theorem \ref{main} is proved.

Next, we prove the divergent part. To this end, suppose that there exist $n_0>0$ and $\delta>0$ such that  $D(n)\le \delta n\log n$ for all $n\ge n_0,$  and  $\sum_{n=2}^\infty\frac{1}{D(n)\log n}=\infty.$ From \eqref{dmn}, we see that \eqref{dmna} is fulfilled.
Moreover, from \eqref{dic}, we know that $D(n)$ is increasing in $n>N_0.$ Finally, from \eqref{pw}, we infer that \eqref{ppt} holds.
Thus, it follows from part (ii) of Theorem \ref{fis} that
\begin{align}
  |C(a,b)|=\infty, \ |C_w|=\infty, \text{ for } a\ge b\ge1,  \label{last}
\end{align}
almost surely.
Now, consider $a\ge1,$  $\phi\ne A\subseteq \{a,a+1,...\}$ and $\phi\ne B\subseteq \{1,2,...,a\}.$ Obviously, from \eqref{last}, we obtain
$|C(A,a)|=\sum_{x\in A}|C(x,a)|=\infty$ and $|C(a,B)|= \sum_{b\in B}|C(a,b)|=\infty,$ almost surely.  The divergent part of Theorem \ref{main} is proved. \qed

\section{Expectations-Proof of Proposition \ref{ec}}\label{s5}
In this section, letting the perturbations $r_n,n\ge1$ be as in \eqref{dr}, we estimate the expectations of the cardinalities  of the sets $C(a,b)\cap [1,n]$ and $C_w\cap [1,n].$ For this purpose, we first give the proof of Lemma \ref{dnl}.
\subsection{Proof of Lemma \ref{dnl}}

\proof It is shown in Cs\'aki et al. \cite{cfrc} (see page 637 therein) that $c_1n(\log\log n)^\beta<D(n)<c_2n(\log\log n)^\beta$ for some constants $0<c_1<c_2<\infty.$ The proof of Lemma \ref{dnl} is a refinement of such a result. We need the following lemma.
\begin{lemma}\label{lfr}
Suppose that $f(x),x\ge0$ is a nonnegative function,  $f(x)$ is decreasing in $x>M$ for some $M>0,$ and $\sum_{k=1}^{\infty}(f(k))^2<\infty.$ Set $\rho_n=1-f(n)+O((f(n))^2).$  Then, \begin{align}\label{cl8}
  \rho_1\cdots\rho_n\sim c \exp\z\{-\int_{1}^{n}f(x)dx\y\} \text{ as }n\rto.
\end{align}
\end{lemma}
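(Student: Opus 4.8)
The plan is to pass to logarithms and reduce the statement to two elementary facts: a one-term Taylor expansion of $\log\rho_k$, and the comparison of a sum with an integral for an eventually monotone function.

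First I would note that $\sum_k(f(k))^2<\infty$ forces $f(k)\to 0$, and since $f$ is nonnegative and decreasing on $(M,\infty)$, in fact $f(x)\to 0$ as $x\rto$ through the reals; in particular $\rho_k\to1$. Writing $\rho_k=1-f(k)+g(k)$ with $|g(k)|\le c(f(k))^2$ for all large $k$ (this is the meaning of $\rho_n=1-f(n)+O((f(n))^2)$), a one-term expansion of $\log(1+u)$ at $u=-f(k)+g(k)$ gives $\log\rho_k=-f(k)+O((f(k))^2)$ for all large $k$, with a bound that is uniform in $k$. Summing and using $\sum_k(f(k))^2<\infty$, the series $\sum_{k=1}^{n}\big(\log\rho_k+f(k)\big)$ converges as $n\rto$ to a finite constant, call it $c_1$; equivalently $\log(\rho_1\cdots\rho_n)=-\sum_{k=1}^{n}f(k)+c_1+o(1)$.

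Next I would compare $\sum_{k=1}^{n}f(k)$ with $\int_1^n f(x)\,dx$, which makes sense since $f$ is locally integrable. For $k>M+1$, monotonicity gives $\int_k^{k+1}f(x)\,dx\le f(k)\le\int_{k-1}^{k}f(x)\,dx$, hence $0\le f(k)-\int_k^{k+1}f(x)\,dx\le f(k)-f(k+1)$; summing the right-hand side telescopes and stays bounded by $f(M+2)$, so the partial sums $\sum_{k=M+2}^{n}\big(f(k)-\int_k^{k+1}f(x)\,dx\big)$ increase to a finite limit. Absorbing the finitely many initial terms into a constant, $\sum_{k=1}^{n}f(k)-\int_1^n f(x)\,dx$ converges to a finite constant $c_2$ as $n\rto$. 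Substituting into the previous display gives $\log(\rho_1\cdots\rho_n)=-\int_1^n f(x)\,dx+(c_1-c_2)+o(1)$, and exponentiating yields \eqref{cl8} with $c=e^{c_1-c_2}$.

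There is no serious obstacle here; the only points requiring care are the finitely many small indices where $f$ need not be monotone or small — these only affect the additive constants $c_1,c_2$ — and ensuring that the $O((f(k))^2)$ bound in the expansion of $\log\rho_k$ is uniform, so that it may legitimately be summed against the convergent series $\sum_k(f(k))^2$.
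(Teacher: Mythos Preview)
Your proof is correct and is essentially the same as the paper's, only more explicit. The paper's proof is a single sentence asserting that $\sum_{j=1}^\infty\bigl(\log\rho_j+\int_{j-1}^{j}f(x)\,dx\bigr)$ converges; your two steps---the Taylor expansion giving $\log\rho_k+f(k)=O((f(k))^2)$ and the monotone integral comparison giving convergence of $\sum_k\bigl(f(k)-\int_{k}^{k+1}f(x)\,dx\bigr)$---are precisely how one would verify that assertion, and you have handled the finitely many non-monotone initial indices correctly by absorbing them into the constants.
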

\proof
Under the given conditions, by checking carefully, we can show that the series
$$\sum_{j=1}^\infty\z(\log \rho_j+\int_{j-1}^jf(x)dx\y)$$ is convergent. Thus, \eqref{cl8} is true. \qed

Now we are ready to finish the proof of Lemma \ref{dnl}.
Notice that $\rho_n=1-4r_n+O(r_n^2)$ as $n\rto.$
Letting $f(x)=\frac{1}{x}+\frac{1}{x(\log\log x)^\beta},\ x\ge 3,$ then we have $\rho_n=1-f(n)+O((f(n))^2),$ as $n\rto.$
 Thus, applying Lemma \ref{lfr} we get
\begin{align}\label{rl}
  \rho_1\cdots\rho_n\sim \frac{ c}{n}\exp\z\{-\int_{3}^{n}\frac{1}{x(\log\log x)^\beta}dx\y\} \text{ as }n\rto.
\end{align}
Set $B_n=:\sum_{i=n}^{\infty}\rho_1\cdots\rho_i$ for $n\ge1.$
Fix  $\ve>0.$ By \eqref{rl}  there exists $M_1>0$ such that
\begin{align}\label{re}
  (c-\ve)&\frac{ 1}{n}\exp\z\{-\int_{3}^{n}\frac{1}{x(\log\log x)^\beta}dx\y\}\no\\
  &\le\rho_1\cdots \rho_n\le (c+\ve)\frac{ 1}{n}\exp\z\{-\int_{3}^{n}\frac{1}{x(\log\log x)^\beta}dx\y\}, n>M_1.
\end{align}
Thus, we have
\begin{align}\label{bg}
  (c-\ve)&\sum_{i=n}^\infty\frac{ 1}{i}\exp\z\{-\int_{3}^{i}\frac{1}{x(\log\log x)^\beta}dx\y\}\no\\
  &\le B_n \le (c+\ve)\sum_{i=n}^\infty\frac{ 1}{i}\exp\z\{-\int_{3}^{i}\frac{1}{x(\log\log x)^\beta}dx\y\}, n>M_1.
\end{align}
Let $g(y)=\frac{1}{y}\exp\z\{-\int_{3}^{y}\frac{1}{x(\log\log x)^\beta}dx\y\},\ y\ge3.$ Since $g(y)$ is decreasing in $y\ge3,$ then we have $0<g(l)\le \sum_{j=n}^lg(j)-\int_{n}^{l}g(y)dy\le g(n)$ for $l>n\ge3.$ Thus, we deduce that
\begin{align}\no
  1\le \frac{\sum_{j=n}^\infty g(j)}{\int_{n}^{\infty}g(y)dy}\le 1+\frac{g(n)}{\int_{n}^{\infty}g(y)dy},\ n\ge3.
\end{align}
But  by  L'Hospital's rule,  $\lim_{n\rto}\frac{g(n)}{\int_{n}^{\infty}g(y)dy}=0.$ Therefore, there exists a number $M_2>0$ such that
\begin{align}\label{gi}
  1\le \frac{\sum_{j=n}^\infty g(j)}{\int_{n}^{\infty}g(y)dy}\le 1+\ve,\ n>M_2.
\end{align}
Substituting \eqref{gi} into \eqref{bg}, we infer that for $n>M_1\vee M_2,$
\begin{align*}
  (c-\ve)&\int_{n}^\infty\frac{ 1}{y}\exp\z\{-\int_{3}^{y}\frac{1}{x(\log\log x)^\beta}dx\y\}dy\no\\
  &\le B_n \le (c+\ve)(1+\ve)\int_{n}^\infty\frac{ 1}{y}\exp\z\{-\int_{3}^{y}\frac{1}{x(\log\log x)^\beta}dx\y\}dy.
\end{align*}
Since
\begin{align}\no
  \int_{n}^\infty\frac{ 1}{y}\exp\z\{-\int_{3}^{y}\frac{1}{x(\log\log x)^\beta}dx\y\}dy\sim (\log\log n)^\beta\exp\z\{\int_3^n\frac{1}{x(\log\log x)^\beta}dx \y\}
\end{align} as $n\rto$ (see Cs\'aki et al. \cite{cfrc}, page 637), there is $M_3>M_1\vee M_2$ such that  for $n>M_3$

\begin{align}\label{bup}
  (c-\ve)(1-\ve)&(\log\log n)^\beta\exp\z\{\int_3^n\frac{1}{x(\log\log x)^\beta}dx \y\}\no\\
  &\le B_n \le (c+\ve)(1+\ve)^2(\log\log n)^\beta\exp\z\{\int_3^n\frac{1}{x(\log\log x)^\beta}dx \y\}.
\end{align}
Noticing that $D(n)=\frac{B_n}{\rho_1\cdots\rho_n,}$ then from  \eqref{re} and \eqref{bup}, we get
\begin{align}\no
 \frac{(c-\ve)(1-\ve)}{c+\ve}n(\log\log n)^\beta<D(n)< \frac{(c+\ve)(1+\ve)^2}{c-\ve}n(\log\log n)^\beta,\ n >M_3.
\end{align}
We thus come to the conclusion that
\begin{align}\no
 D(n)\sim n(\log\log n)^\beta \text{ as }n\rto.
\end{align}
Lemma \ref{dnl} is proved. \qed
\subsection{Proof of Proposition \ref{ec}}
\proof Fix integers $a\ge b\ge 1.$
For $k\ge1,$ we set $\eta_k=1_{\{k\in C(a,b)\}},$ $\varphi_k=1_{\{k\in C_w\}}.$  Then we have $$|C(a,b)\cap [1,n]|=\sum_{k=1}^n \eta_k \text{ and } |C_w\cap [1,n]|=\sum_{k=1}^n \varphi_k.$$ It follows from Lemma \ref{lxa} that
\begin{align}
  \mathbb P(\eta_k=1)&=\mathbb P(k\in C(a,b))=
  \mathbb P(\xi(k)=a,\xi(k,\uparrow)=b)\no\\
  &=\binom{a-1}{b-1}\frac{p_k^bq_k^{a-b}}{D(k)}\z(1-\frac{1}{D(k)}\y)^{b-1},\no\\
  \mathbb P(\varphi_k=1)&=\mathbb P(k\in C_w)=\frac{1}{p_kD(k-1)}.\no
\end{align}
 Noting that $p_k\rightarrow 1/2,$ as $k\rto,$ thus applying Lemma \ref{dnl}, we see that \begin{align}
  &\mathbb P(\eta_k=1)\sim \binom{a-1}{b-1}\frac{1}{2^ak(\log\log k)^\beta} \text{ as }k\rto,\no\\
  &\mathbb P(\varphi_k=1)\sim \frac{ 2}{k(\log\log k)^\beta}\text{ as }k\rto.\no
\end{align}
Consequently, for $\ve>0,$ there exists a number $M_1>0$ such that
\begin{align}\no
 \binom{a-1}{b-1}\frac{1-\ve}{2^ak(\log\log k)^\beta}\le  &\mathbb P(\eta_k=1)\le \binom{a-1}{b-1}\frac{1+\ve}{2^ak(\log\log k)^\beta},\ k\ge M_1,\\
 \frac{2(1-\ve)}{k(\log\log k)^\beta}\le &\mathbb P(\varphi_k=1)\le \frac{2(1+\ve)}{k(\log\log k)^\beta},\ k\ge M_1.\no
\end{align}
But \begin{align*}
  \sum_{k=M_1}^n &\frac{1}{k(\log\log k)^\beta}\sim \int_{M_1}^n\frac{dx }{x(\log\log x)^{\beta}}\sim \frac{\log n}{(\log\log n)^{\beta}},\ n\rto.
  \end{align*}
Therefore, there exists a number $M_2>M_1$ such that for $n>M_2,$
\begin{align}
\binom{a-1}{b-1}\frac{(1-\ve)^2\log n}{2^a(\log\log n)^{\beta}}\le &\sum_{k=M_2}^n \mathbb P(\eta_k=1)\le \binom{a-1}{b-1}\frac{(1+\ve)^2\log n}{2^a(\log\log n)^{\beta}},\label{se}\\
\frac{2(1-\ve)^2\log n}{(\log\log n)^{\beta}}\le &\sum_{k=M_2}^n \mathbb P(\varphi_k=1)\le \frac{2(1+\ve)^2\log n}{(\log\log n)^{\beta}}.\label{svp}
\end{align}
As a result, we deduce  from \eqref{se} that
\begin{align}
  (1-\ve)^2&\le  \varliminf_{n\rto}\frac{2^a(\log\log n)^{\beta}}{\binom{a-1}{b-1}\log n}\mathbb E|C(a,b)\cap[1,n]|\no\\
  &\le \varlimsup_{n\rto}\frac{2^a(\log\log n)^{\beta}}{\binom{a-1}{b-1}\log n}\mathbb E|C(a,b)\cap[1,n]|\le (1+\ve)^2.\no
\end{align}
Since $\ve$ is arbitrary, letting $\ve\rightarrow0,$ we get \eqref{ecab}.
Similarly,  from \eqref{svp} we get \eqref{ecw}.
Proposition \ref{ec} is proved. \qed
\section{Limit distributions-Proof of Theorem \ref{cs}}\label{s6}
This section is devoted to proving Theorem \ref{cs}. The proof is based on the moment method. To start with, we give an auxiliary lemma to estimate $D(i)$ and $D(i,j).$

\subsection{An auxiliary lemma}
\begin{lemma}\label{dij} Set $r_1=1/4,$ and for $n\ge 2,$ let $r_n=\frac{1}{2n}.$ Suppose that $p_n=1/2+r_n,\ n\ge1.$ Then there exists a number $i_0>0$ such that
 \begin{align}\label{dup}
  (1-\varepsilon)\frac{i(j-i)}{j}\le D(i&,j)\le (1+\varepsilon)\frac{i(j-i)}{j},\ j>i\ge i_0,\\
\label{dupi}
  (1-\varepsilon) i\le D(&i)\le (1+\varepsilon)i,\ i\ge i_0.
\end{align}
\end{lemma}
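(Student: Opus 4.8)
The plan is to exploit the fact that with $r_n = \tfrac{1}{2n}$ for $n \geq 2$, the ratios $\rho_n = q_n/p_n = (1/2 - r_n)/(1/2 + r_n)$ have a very clean asymptotic expansion. First I would compute $\rho_n = \frac{1 - 2r_n}{1 + 2r_n} = \frac{1 - 1/n}{1 + 1/n} = \frac{n-1}{n+1}$ for $n \geq 2$, which telescopes: $\rho_{m+1}\cdots\rho_{k} = \prod_{\ell=m+1}^{k}\frac{\ell-1}{\ell+1} = \frac{m(m+1)}{k(k+1)}$ for $k > m \geq 1$. This exact product formula is the engine of the whole proof and sidesteps the need for Lemma \ref{lfr}; the case $n=1$ only shifts things by a bounded factor and is absorbed into the $i \geq i_0$ hypothesis.

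Next I would plug the telescoped product into the definition \eqref{ddmn} of $D(m,n)$. For $n \geq m+2$ and $m \geq 1$,
\begin{align*}
D(m,n) &= 1 + \sum_{j=1}^{n-m-1}\rho_{m+1}\cdots\rho_{m+j} = 1 + \sum_{j=1}^{n-m-1}\frac{m(m+1)}{(m+j)(m+j+1)}\\
&= 1 + m(m+1)\sum_{k=m+1}^{n-1}\left(\frac{1}{k} - \frac{1}{k+1}\right) = 1 + m(m+1)\left(\frac{1}{m+1} - \frac{1}{n}\right) = 1 + m - \frac{m(m+1)}{n}.
\end{align*}
So $D(m,n) = m + 1 - \frac{m(m+1)}{n} = (m+1)\cdot\frac{n - m}{n}$, i.e. $D(m,n) = \frac{(m+1)(n-m)}{n}$ exactly (for $m\ge 1$, $n \geq m+2$; the boundary cases $n = m, m+1$ are trivial to check separately and agree up to the stated $\varepsilon$-slack). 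Comparing $\frac{(i+1)(j-i)}{j}$ with $\frac{i(j-i)}{j}$ gives the ratio $\frac{i+1}{i} \to 1$, so for any $\varepsilon > 0$ there is $i_0$ with $(1-\varepsilon)\frac{i(j-i)}{j} \leq \frac{(i+1)(j-i)}{j} \leq (1+\varepsilon)\frac{i(j-i)}{j}$ once $i \geq i_0$, uniformly in $j > i$; this yields \eqref{dup}. For \eqref{dupi}, I would take $n \to \infty$ in the formula $D(m,n) = (m+1)\frac{n-m}{n}$, giving $D(m) = m+1$ exactly (consistent with the recursion $D(m) = 1 + \rho_{m+1}D(m+1)$), and then $\frac{i+1}{i}\to 1$ again delivers \eqref{dupi}.

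The main point to be careful about — rather than a genuine obstacle — is the bookkeeping at small indices: the formula $\rho_n = \frac{n-1}{n+1}$ is only valid for $n \geq 2$, and the definition of $D(m,n)$ sums $\rho_{m+1}\cdots\rho_{m+j}$, so everything is clean provided $m \geq 1$; since we only claim the estimates for $i \geq i_0$ we may take $i_0 \geq 1$ and there is nothing delicate. One should also double-check the three-case split in \eqref{ddmn} ($n = m$, $n = m+1$, $n \geq m+2$) so that the closed form $D(m,n) = (m+1)(n-m)/n$ is verified to hold (or to be off by a harmless bounded factor) across all of them; this is a short direct computation. No asymptotic analysis is actually needed here — the lemma is really an identity dressed up with $\varepsilon$'s, which is why it can be proved much more cleanly than Lemma \ref{dnl}.
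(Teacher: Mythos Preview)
Your proof is correct and takes a genuinely different route from the paper. The paper proceeds analytically: it writes $\rho_n = 1 - 2/n + O(n^{-2})$, Taylor-expands $\log\rho_n$, and uses integral comparison to obtain $(1-\varepsilon)^{1/2}\bigl(\tfrac{i-1}{j}\bigr)^2 \le \rho_i\cdots\rho_j \le (1+\varepsilon)^{1/2}\bigl(\tfrac{i-1}{j}\bigr)^2$ for $j\ge i\ge i_2$, then sums $i^2/n^2$ over $n$ and compares the result with $i(j-i)/j$. Your observation that $\rho_n = \tfrac{n-1}{n+1}$ exactly (for $n\ge 2$) short-circuits all of this: the product telescopes to $\rho_{m+1}\cdots\rho_k = \tfrac{m(m+1)}{k(k+1)}$, the partial-fraction sum telescopes again, and you land on the closed forms $D(m,n) = \tfrac{(m+1)(n-m)}{n}$ and $D(m) = m+1$, from which the lemma follows by the trivial bound $1 \le \tfrac{i+1}{i} \le 1+\varepsilon$ for $i \ge 1/\varepsilon$. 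The paper's argument is more robust in that it would survive perturbations of the form $\rho_n = 1 - 2/n + O(n^{-2})$ that do not telescope, but for the specific parameters of this lemma your approach is sharper (exact identities rather than asymptotics) and considerably shorter. Your caveat about $\rho_1$ is handled correctly: since $D(m,n)$ only involves $\rho_\ell$ for $\ell \ge m+1$, the restriction $i\ge i_0 \ge 1$ keeps the anomalous $\rho_1$ out of the computation.
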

\proof
Notice that $$\rho_n\sim 1- 4r_n+O(r_n^2)=1-\frac{2}{n}+O(n^{-2}), \ n\rto.$$
Then for $j>i\ge1,$ choosing  $\theta_n\in(0\wedge(\rho_n-1),0\vee(\rho_n-1)),\ i\le n\le j$ properly, we have
\begin{align*}
  \rho_i\cdots \rho_j&=e^{\sum_{n=i}^j\log \rho_n}=e^{-\sum_{n=i}^j\frac{(\rho_n-1)^2}{2(1+\theta_n)^2}}e^{\sum_{n=i}^j(\rho_n-1)}\\
  &\ge e^{-\sum_{n=i}^\infty\frac{cn^{-2}}{2(1+\theta_n)2}}e^{-2\sum_{n=i}^j\frac{1}{n}}e^{-\sum_{n=i}^\infty cn^{-2}}\\
  &\ge e^{-\sum_{n=i}^\infty\frac{cn^{-2}}{2(1+\theta_n)2}}e^{-2\int_{i}^{j+1}\frac{1}{x}dx}e^{-\sum_{n=i}^\infty cn^{-2}}\\
  &=\z(\frac{i}{j+1}\y)^2 e^{-\sum_{n=i}^\infty\frac{cn^{-2}}{2(1+\theta_n)2}}e^{-\sum_{n=i}^\infty cn^{-2}}.
\end{align*}
Fix $\varepsilon>0.$ We can find a number $i_1$ such that
$\frac{i}{i-1}\frac{j}{j+1}e^{-\sum_{n=i}^\infty\frac{cn^{-2}}{2(1+\theta_n)2}}e^{-\sum_{n=i}^\infty cn^{-2}}>(1-\varepsilon)^{1/2}$ for all $i\ge i_1.$ Thus we have \begin{align}
  \rho_i\cdots \rho_j\ge (1-\varepsilon)^{1/2}\z(\frac{i-1}{j}\y)^2, \text{ for all }j\ge i\ge i_1.\no
\end{align}
Similarly, we can find a number $i_2>i_1$ such that
\begin{align}
  \rho_i\cdots \rho_j\le (1+\varepsilon)^{1/2}\z(\frac{i-1}{j}\y)^2, \text{ for all }j\ge i\ge i_2.\no
\end{align}
Consequently, we get
\begin{align}
 (1-\varepsilon)^{1/2}\z(\frac{i-1}{j}\y)^2\le \rho_i\cdots \rho_j\le (1+\varepsilon)^{1/2}\z(\frac{i-1}{j}\y)^2, \text{ for all }j\ge i\ge i_2.\no
\end{align}
As a result, we obtain
\begin{align}
  (1-\varepsilon)^{1/2}\sum_{n=i}^{j-1}\z(\frac{i}{n}\y)^2\le D(i,j)\le (1+\varepsilon)^{1/2}\sum_{n=i}^{j-1}\z(\frac{i}{n}\y)^2,\ j>i\ge i_2.\no
\end{align}
Notice that for some number $i_0>i_2,$ we have
\begin{align}
 (1-\ve)^{1/2} \frac{i(j-i)}{j}\le  \sum_{n=i}^{j-1}\z(\frac{i}{n}\y)^2\le (1+\ve)^{1/2} \frac{i(j-i)}{j},\ j> i\ge i_0.\no
\end{align}
 Therefore, we get
 \begin{align}
  (1-\varepsilon)\frac{i(j-i)}{j}\le D(i,j)\le (1+\varepsilon)\frac{i(j-i)}{j},\ j>i\ge i_0\no
\end{align} which proves \eqref{dup}.
 Letting $j\rto$ in \eqref{dup}, we get \eqref{dupi}.
  Lemma \ref{dij} is proved. \qed
\subsection{Convergence of the moments}
An important step to prove Theorem \ref{cs} is to show the convergence of $\frac{\mathbb E(|C\cap [1,n]|^k)}{(\log n)^k},$ for $k\ge 1,$ $C=C(a,b),$ $C(*,a)$ or $C_w,$  as $n\rto.$ We have the following lemma.
\begin{lemma}\label{com} Fix integers $a\ge b\ge1.$ Let $r_1=1/4 $ and  $r_n=\frac{1}{2n}$  for $n\ge2.$  Set $p_n=\frac{1}{2}+r_n$ for $n\ge1.$ Then we have
 \begin{align*}
   &\lim_{n\rto}\frac{ \mathbb E(|C\cap [1,n]|^k)}{(\log n)^k}=\left\{\begin{array}{cl}
     \frac{k!}{2^{ak}}\binom{a-1}{b-1}^k & \text{if }\ C=C(a,b),\\
  k!&\text{if }\ C=C(*,a),\\
  2^kk!&\text{if }\ C=C_w,
   \end{array}\right. \text{ for }\ k\ge1.
 \end{align*}
  \end{lemma}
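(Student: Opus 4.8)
The plan is the standard moment method. Writing $|C\cap[1,n]|=\sum_{j=1}^{n}1_{\{j\in C\}}$, I would start from
\begin{align*}
\mathbb E\big(|C\cap[1,n]|^{k}\big)&=\sum_{(j_{1},\dots,j_{k})\in[1,n]^{k}}\mathbb P(j_{1}\in C,\dots,j_{k}\in C)\\
&=\sum_{r=1}^{k}r!\,S(k,r)\sum_{0<i_{1}<\dots<i_{r}\le n}\mathbb P(i_{1}\in C,\dots,i_{r}\in C),
\end{align*}
where $S(k,r)$ is a Stirling number of the second kind, obtained by grouping the tuples $(j_{1},\dots,j_{k})$ according to their set of distinct entries and using that $\mathbb P(j_{1}\in C,\dots,j_{k}\in C)$ depends only on that set. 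The first task is an a priori bound $\mathbb E(|C\cap[1,n]|^{r})=O((\log n)^{r})$ for every $r$. For this I would combine \eqref{jkbw} with the elementary consequence of Lemma \ref{dij} that there is $c_{0}>0$ with $D(i)\ge c_{0}\,i$ and $D(i,j)\ge c_{0}\,i(j-i)/j$ for all $j>i\ge1$ (for small indices one uses $D\ge1$ and $i(j-i)/j\le i$), together with the telescoping identity
\begin{align*}
\frac{1}{j_{k}}\prod_{i=1}^{k-1}\frac{j_{i+1}}{j_{i}(j_{i+1}-j_{i})}=\frac{1}{j_{1}(j_{2}-j_{1})\cdots(j_{k}-j_{k-1})},\qquad 0<j_{1}<\dots<j_{k},
\end{align*}
and then \eqref{ub} gives $\sum_{0<i_{1}<\dots<i_{r}\le n}\mathbb P(i_{1}\in C,\dots,i_{r}\in C)\le c\,c_{0}^{-r}(r+\log n)^{r}$. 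Hence only the $r=k$ term survives after dividing by $(\log n)^{k}$, so it suffices to show $\lim_{n\rto}(\log n)^{-k}\sum_{0<j_{1}<\dots<j_{k}\le n}\mathbb P(j_{1}\in C,\dots,j_{k}\in C)=\lambda_{C}^{k}$.

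To prove this, fix $\ve>0$ and pick $N$ large enough that \eqref{jkaw} applies and the bounds of Lemma \ref{dij} hold for all indices $\ge N$. Split the sum over $0<j_{1}<\dots<j_{k}\le n$ into the ``separated'' tuples, those with $j_{i}-j_{i-1}>N$ for all $i=1,\dots,k$ (convention $j_{0}:=0$), and the rest. For the non-separated tuples I would bound $\mathbb P$ by \eqref{jkbw}, convert it via the lower bounds on $D$ and the telescoping identity into $c\,c_{0}^{-k}/(j_{1}(j_{2}-j_{1})\cdots(j_{k}-j_{k-1}))$, and then apply \eqref{lsb} (summed over the $k$ possible locations of a gap $\le N$) to conclude this part is $o((\log n)^{k})$. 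For the separated tuples, Lemma \ref{dij} gives $D(j_{k})=(1+O(\ve))j_{k}$ and $D(j_{i},j_{i+1})=(1+O(\ve))j_{i}(j_{i+1}-j_{i})/j_{i+1}$, uniformly over all such tuples since every index exceeds $N$; combined with the telescoping identity, \eqref{jkaw} becomes
\begin{align*}
\frac{(\lambda_{C}-\ve)^{k}}{(1+\ve)^{k}}\cdot\frac{1}{j_{1}(j_{2}-j_{1})\cdots(j_{k}-j_{k-1})}&\le\mathbb P(j_{1}\in C,\dots,j_{k}\in C)\\
&\le\frac{(\lambda_{C}+\ve)^{k}}{(1-\ve)^{k}}\cdot\frac{1}{j_{1}(j_{2}-j_{1})\cdots(j_{k}-j_{k-1})}.
\end{align*}
Summing over separated tuples and invoking Lemma \ref{lems} (with $n_{2}=N+1$) to evaluate $\sum 1/(j_{1}(j_{2}-j_{1})\cdots(j_{k}-j_{k-1}))\sim(\log n)^{k}$, the normalized sum is squeezed between $(\lambda_{C}-\ve)^{k}(1+\ve)^{-k}(1+o(1))$ and $(\lambda_{C}+\ve)^{k}(1-\ve)^{-k}(1+o(1))$; adding back the negligible non-separated part and letting $\ve\to0$ yields the limit $\lambda_{C}^{k}$.

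Putting the pieces together gives $\mathbb E(|C\cap[1,n]|^{k})/(\log n)^{k}\to k!\,\lambda_{C}^{k}$, and substituting $\lambda_{C_{w}}=2$, $\lambda_{C(a,b)}=2^{-a}\binom{a-1}{b-1}$, $\lambda_{C(*,a)}=1$ from \eqref{lpc} produces exactly $2^{k}k!$, $2^{-ak}\binom{a-1}{b-1}^{k}k!$ and $k!$, which is the assertion. The step I expect to be the main obstacle is the separated regime: one must check that the $O(\ve)$ error from Lemma \ref{dij} is genuinely uniform over all admissible tuples (so that, after summation and division by $(\log n)^{k}$, it collapses to an $\ve$-dependent constant tending to $1$), and that the truncations — tuples with $j_{1}$ or some gap $\le N$, and tuples with repeated entries — really contribute only $o((\log n)^{k})$; everything else is bookkeeping on top of Proposition \ref{propc}, Lemma \ref{dij} and Lemma \ref{lems}.
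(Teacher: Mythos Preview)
Your proposal is correct and follows essentially the same route as the paper: expand the $k$th moment over tuples, reduce to the ordered sum $G(n,m)=\sum_{0<j_1<\dots<j_m\le n}\mathbb P(j_1\in C,\dots,j_m\in C)$, split into well-separated tuples (handled by \eqref{jkaw}, Lemma~\ref{dij} and Lemma~\ref{lems}) and the rest (handled by \eqref{jkbw} and \eqref{lsb}), and observe that only the $m=k$ term survives after division by $(\log n)^k$. Two cosmetic differences: you organize the expansion via Stirling numbers $r!\,S(k,r)$ (the standard surjection count), whereas the paper parametrizes by compositions $(l_1,\dots,l_m)\in S(k,m)$; and by first securing a global lower bound $D(i,j)\ge c_0\,i(j-i)/j$ valid for all $j>i\ge1$, you dispose of the non-separated tuples in one stroke, while the paper further splits them into cases $B_1,\dots,B_m$ according to how many of the smallest indices lie below the threshold $n_2$.
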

\proof Consider $C= C(a,b),C_w$ or $C(*,a).$ Let $\lambda_C$ be the one in \eqref{lpc}. For $k\ge1,$  set $\eta_k=1_{\{k\in C\}},$ clearly we have $|C\cap [1,n]|=\sum_{k=1}^n \eta_k. $

Let $S(a,j)$ be the one defined in \eqref{saj}.  Then
\begin{align}
  \mathbb \mathbb E|C\cap [1,n]|^k&=\mathbb E\z(\z(\sum_{j=1}^n\eta_j\y)^k\y)=\sum_{0< j_1,j_2,...,j_k\le n}\mathbb E\z(\eta_{j_1}\eta_{j_2}\cdots\eta_{j_k}\y)\no\\
  &=\sum_{m=1}^k\sum_{\begin{subarray}{c}l_1+\dots+l_m=k,\\
  l_i\ge1,i=1,...,m
  \end{subarray}}m!\binom{k}{m}\sum_{0< j_1<...<j_m\le n}\mathbb E\z(\eta_{j_1}^{l_1}\cdots\eta_{j_m}^{l_m}\y)\no\\
  &=\sum_{m=1}^k\sum_{(l_1,...,l_m)\in S(k,m)}m!\binom{k}{m}\sum_{0< j_1<...<j_m\le n}\mathbb E\z(\eta_{j_1}^{l_1}\cdots\eta_{j_m}^{l_m}\y).\no
\end{align}
Since the values of $\eta_j,j=1,...,n$ are either $0$ or $1,$ then taking Lemma \ref{ns}  into account, we have
\begin{align}
  &\mathbb E|C\cap [1,n]|^k=\sum_{m=1}^k\binom{k-1}{m-1}m!\binom{k}{m}\sum_{0< j_1<...<j_m\le n}\mathbb E(\eta_{j_1}\cdots\eta_{j_m})\no\\
  &=\sum_{m=1}^k\binom{k-1}{m-1}m!\binom{k}{m}\sum_{0< j_1<...<j_m\le n}\mathbb P(j_1\in C,..,j_m\in C)\no\\
  &=:\sum_{m=1}^k\binom{k-1}{m-1}m!\binom{k}{m}G(n,m).\label{esn}
\end{align}
Fix $\ve>0.$ Let $N$ and $i_0$ be as in Proposition \ref{propc} and Lemma \ref{dij}, respectively. Set $n_2=N\vee i_0$ and
denote temporarily \begin{align*}
    &A=\{(j_1,...,j_m)\mid 0< j_1<....<j_m\le n, j_{i+1}-j_i>n_2, \forall 0\le i\le m-1\},\\
    &B=\{(j_1,...,j_m)\mid 0< j_1<....<j_m\le n, (j_1,...,j_m)\notin A\},\no\\
    &B_i=\{(j_1,...,j_m)\mid j_1<...<j_{i}\le n_2<j_{i+1}<...<j_m\le n\},i=1,...,m-1,\\
  &B_m=\{(j_1,...,j_m)| n_2<j_1<..<j_m, \{1\le s\le m-1|j_{s+1}-j_s<n_2\}\ne \phi\},    \end{align*} where and in the remainder of this proof we set $j_0=0.$
Obviously, we have $B=\bigcup_{i=1}^m B_i.$
Therefore, we can write
\begin{align}\label{gab}
  G(n,m)&=\sum_{(j_1,...,j_m)\in A}+\sum_{i=1}^m\sum_{(j_1,...,j_m)\in B_i}\mathbb P(j_1\in C,..,j_m\in C)\no\\
  &=:G_A(n,m)+\sum_{i=1}^mG_{B_i}(n,m).
  \end{align}
  Consider first the term $G_A(n,m).$ From   \eqref{jkaw}, \eqref{dup} and \eqref{dupi}, we get
  \begin{align*}
   G_A(n,m)&=\sum_{(j_1,...,j_m)\in A}\mathbb P(j_1\in C,...,j_m\in C)\\
   &\le(\lambda_C+\ve)^m(1+\ve)^m\sum_{(j_1,...,j_m)\in A} \frac{1}{\z(\prod_{i=1}^{m-1}D(j_i,j_{i+1})\y)D(j_m)}\no\\
  &\le(\lambda_C+\ve)^m(1+\ve)^m \sum_{(j_1,...,j_m)\in A} \frac{1}{j_1(j_2-j_1)\cdots (j_m-j_{m-1})}.\no
\end{align*}
  Applying Lemma \ref{lems},  we obtain
  \begin{align}\label{gau}
    \varlimsup_{n\rto} \frac{G_A(n,m)}{(\log n)^m}\le (\lambda_C+\ve)^m(1+\ve)^m.
  \end{align}
   Similarly, we have
     \begin{align}\label{gal}
    \varliminf_{n\rto} \frac{G_A(n,m)}{(\log n)^m}\ge (\lambda_C-\ve)^m(1-\ve)^m.
  \end{align}
  Next, we claim that
  \begin{align}
    \lim_{n\rto}\frac{G_{B_i}(n,m)}{(\log n)^m}=0,i=1,...,m.\label{gbi}
  \end{align}
  Indeed, for each $1\le i\le m,$ from  \eqref{jkbw}, we see that
  \begin{align}\label{gu}
    G_{B_i}(n,m)\le \sum_{(j_1,...,j_m)\in B_i} \frac{c}{\z(\prod_{i=1}^{m-1}D(j_i,j_{i+1})\y)D(j_m)}.
  \end{align}
  Then, in view of \eqref{lsb}, \eqref{dup} and \eqref{dupi}, we have
\begin{align}
  \varlimsup_{n\rto}\frac{G_{B_m}(n,m)}{(\log n)^m}\le \varlimsup_{n\rto}\frac{c}{(\log n)^m}\sum_{(j_1,...,j_m)\in B_m}\frac{1}{j_1(j_2-j_1)\cdots (j_m-j_{m-1})}=0.\no
\end{align}
 For $1\le i\le m-1,$  from  \eqref{dup}, \eqref{dupi} and \eqref{gu}, we deduce that
  \begin{align}
 G_{B_i}(n,m)&\le c\sum_{\begin{subarray}{c}0\le j_1<...<j_{i}\le n_2,\\
  n\ge j_{m}>...>j_{i+1}> n_2\end{subarray}}\frac{1}{\z(\prod_{s=1}^{m-1}D(j_s,j_{s+1})\y)D(j_m)},\no\\
  &\le c \sum_{0<j_1<...<j_{i}\le n_2}\frac{1}{\prod_{s=1}^{i}D(j_s,j_{s+1})}\no\\
  &\quad\quad\quad\quad\times\sum_{n\ge j_{m}>...>j_{i+1}> n_2}\frac{1}{j_{m}\prod_{s=i+1}^{m-1}(j_{s+1}-j_s)}\no\\
  &\le c\binom{n_2}{i}\max_{0<j_1<...<j_i\le n_2}\frac{1}{\prod_{s=1}^{i}D(j_s,j_{s+1})}\no\\
   &\quad\quad\quad\quad\times\sum_{n\ge j_{m-i}>...>j_{1}> n_2}\frac{1}{j_{m-i}\prod_{s=1}^{m-i-1}(j_{s+1}-j_s)}.\no
\end{align}
But it follows from Lemma \ref{lems} that
\begin{align}
  \lim_{n\rto}\frac{1}{(\log n)^{m-i}}\sum_{n\ge j_{m-i}>...>j_{1}> n_2}\frac{1}{j_{m-i}\prod_{s=1}^{m-i-1}(j_{s+1}-j_s)}=1.\no
\end{align}
Therefore, we obtain
\begin{align}
  \varlimsup_{n\rto}\frac{G_{B_i}(n,m)}{(\log n)^m}=0, 1\le i\le m-1.\no
\end{align}
We thus finish the proof of \eqref{gbi}.

Now, putting \eqref{gau}, \eqref{gal} and \eqref{gbi} together,
  we infer from  \eqref{gab} that
\begin{align}
  (\lambda_C-\ve)^m(1-\ve)^m\le \varliminf\frac{G(n,m)}{(\log n)^m}\le \varlimsup\frac{G(n,m)}{(\log n)^m}\le (\lambda_C+\ve)^m(1+\ve)^m.\no
\end{align}
Since $\ve$ is arbitrary, letting $\ve\rightarrow 0,$ we get
\begin{align}
 \lim_{n\rto} \frac{G(n,m)}{(\log n)^m}=\lambda_C^m.\no
\end{align}
Consequently, dividing by $(\log n)^k$ on both sides of \eqref{esn} and taking the limit, we conclude that
\begin{align}
  \lim_{n\rto}\frac{\mathbb E|C\cap [1,n]|^k}{(\log n)^k}=k!\lambda_C^k.\no
\end{align}
Lemma \ref{com} is proved. \qed

\subsection{Proof of Theorem \ref{cs}}
\proof We have shown in Lemma \ref{com} that for $k\ge 1,$
$\mathbb E\z(\frac{ |C(a,b)\cap [1,n]|}{2^{-a}\binom{a-1}{b-1}\log n}\y)^k\rightarrow k!$, $\mathbb E\z(\frac{|C(*,a)\cap[1,n]|}{\log n}\y)^k\rightarrow k!$ and
$\mathbb E\z(\frac{|C_w\cap [1,n]|}{2\log n}\y)^k\rightarrow k! $ as $n\rto.$  Since $ ((2n)!)^{-\frac{1}{2n}}\sim \frac{e }{2n},$ as $n\rto,$  we thus have
\begin{align*}
  \sum_{k=1}^\infty ((2k)!)^{-\frac{1}{2k}}=\infty.
\end{align*}
Consequently, using Carleman's test for the uniqueness of the moment problem (see e.g., \cite[Chap.II, \S 12]{s}), we conclude that
\begin{align}
  \frac{\z|C(a,b)\cap [1,n]\y|}{2^{-a}\binom{a-1}{b-1}\log n}\overset{ d}{\rightarrow} S,\ \frac{|C(*,a)\cap[1,n]|}{\log n}\overset{ d}{\rightarrow} S\text{ and }  \frac{\z|C_w\cap [1,n]\y|}{2\log n}\overset{ d}{\rightarrow} S\no
\end{align}
as $n\rto,$ where $S$ is  an exponential random variable with $\mathbb P(S>t)=e^{-t}, t>0.$ Theorem \ref{cs} is proved. \qed


\vspace{.5cm}

\noindent{{\bf \Large Acknowledgements:}} The authors are in debt to two anonymous referees who read carefully an earlier unpublished version of our manuscript \cite{w23},  provided us with very useful suggestions and comments, and  especially helped us to improve the proofs of Lemma \ref{lems} and Lemma \ref{aaxy} in the current manuscript, respectively. We also thank Professor A. F\"oldes for confirming to us that an additional  monotonicity condition of $D(n)$ is required for \cite[Theorem 1.1]{cfrc}.   This project is partially supported by the National Natural Science Foundation of China (Grant
No. 11501008) and the Nature Science Foundation of Anhui Educational Committee (Grant No.
2023AH040025).

\end{document}